
\documentclass[suppldata]{interact}

\usepackage{epstopdf}
\usepackage[caption=false]{subfig}
\usepackage{hyperref}
\usepackage[dvipsnames]{xcolor}
\usepackage{stackengine}

\usepackage{multirow}
\usepackage{thmtools}

\usepackage[numbers,sort&compress]{natbib}
\bibpunct[, ]{[}{]}{,}{n}{,}{,}

\theoremstyle{plain}
\newtheorem{theorem}{Theorem}[section]

\theoremstyle{definition}


\renewcommand\thmcontinues[1]{Continued}

\theoremstyle{remark}

\newcommand{\edited}[1]{{#1}}

\begin{document}


\title{Optimal inexactness schedules for Tunable Oracle based Methods}

\author{
\name{Guillaume Van Dessel\textsuperscript{a}\thanks{CONTACT Guillaume Van Dessel. Email: guillaume.vandessel@uclouvain.be} and François Glineur\textsuperscript{a,b}}
\affil{\textsuperscript{a}UCLouvain, ICTEAM (INMA), 4 Avenue Georges Lemaître, Louvain-la-Neuve, BE; \\\textsuperscript{b}UCLouvain, CORE, 34
Voie du Roman Pays, Louvain-la-Neuve, BE}
}

\maketitle

\begin{abstract}
Several recent works address the impact of inexact oracles in the convergence analysis of modern first-order optimization techniques, e.g.\@ Bregman Proximal Gradient and Prox-Linear methods as well as their accelerated variants, extending their field of applicability. In this paper, we consider situations where the oracle's inexactness can be chosen upon demand, more precision coming at a computational price counterpart. Our main motivations arise from oracles requiring the solving of auxiliary subproblems or the inexact computation of involved quantities, e.g.\@ a mini-batch stochastic gradient as a full-gradient estimate. We propose optimal inexactness schedules according to presumed oracle cost models and patterns of worst-case guarantees, covering among others convergence results of the aforementioned methods under the presence of inexactness.
Specifically, we detail how to choose the level of inexactness at each iteration to obtain the best trade-off between convergence and computational investments. Furthermore, we highlight the benefits one can expect by tuning those oracles' quality instead of keeping it constant throughout. Finally, we provide \edited{extensive} numerical experiments that support the practical interest of our approach, both in \emph{offline} and \emph{online} settings, applied to the \edited{Fast Gradient} algorithm.
\end{abstract}

\begin{keywords}
inexact oracles; tunable accuracy; optimal schedules; first-order algorithm
\end{keywords}

\section{Introduction}
\label{intro_notes}
Typical iterative optimization schemes rely on key ingredients often referred to as \emph{oracles} \cite{HT20}. In what concerns continuous optimization, with respect to both convex and nonconvex realms, the vast majority of the papers use two distinctive types of oracles at each iteration, namely the \emph{informative} and \emph{computational} ones \cite{Thekum20}. 
\begin{enumerate}
    \item[(I)] \textbf{Informative}: one assumes the possibility to obtain differential information about the objective (zero, first, $\dots$, higher-order) at successive query points.
    \item[(II)] \textbf{Computational}: one assumes the ability to update sequences of iterates following some rules, often involving the resolution of easy or, at least, not too complicated, subproblems.
\end{enumerate}
By default, one considers implicitly that both oracles yield \emph{error-free} outputs. Nevertheless there exist cases for which such commodity appears unreasonable, e.g.\@ when the objective value stands as the result of a non-trivial optimization problem (I) \cite{Zhang21} or when one cannot solve exactly subproblems in (II) \cite{Mokh18}. Unfortunately, more and more problems of practical interest exhibit a structure that does not allow for \emph{exact} oracles. Therefore there has been put a lot of efforts over the past years in dealing with inexactness about \emph{informative} and \emph{computational} oracles in order for widely spread algorithms (Bregman Proximal Gradient, Prox-Linear, their accelerated variants, etc.) \cite{PD18,Stonyakin20,Yang21} to remain applicable in such scenarios. 
As argued in the previous paragraph, a common source of inexactness is due from the necessity of numerically solving auxiliary non-trivial optimization problems to produce oracles' outputs. 
\example \label{introductory_example} (\emph{saddle-point problems}) For instance,  \cite{Devolder2013FirstorderMW} analyzed the Gradient Method (GM) that allowed for what they defined as $(\delta,L,\mu)$ inexact oracles. Consider saddle-point problems of the type \begin{equation}
F^* = \min_{x\in\mathbb{R}^d}\,\Big\{F(x):= \max_{u \in \mathbb{R}^n}\,G(u) + \langle Au,\,x\rangle\Big\} > - \infty
\label{eq:example_intro}
\end{equation} where $G$ is $L(G)$ smooth and $\mu(G)>0$ strongly-concave and $A \in \mathbb{R}^{d \times n}$ is a matrix. They showed that $F$ is $L(F) = \frac{\lambda_{\text{max}}(AA^T)}{\mu(G)}$ smooth, $\mu(F) = \frac{\lambda_{\text{min}}(AA^T)}{L(G)}$ strongly-convex and its gradient is given as $\nabla F(x) = Au^*(x)$ where $u^*(x)$ is the exact minimizer of 
\begin{equation}
\max_{u \in \mathbb{R}^n}\,G(u) + \langle Au,\,x\rangle
\label{eq:primal}
\end{equation}
In the general case, \eqref{eq:primal} cannot be solved exactly. Nevertheless, it can be approximately solved up to $\delta>0$ global accuracy quite efficiently, e.g.\@ by dedicated accelerated first-order methods \cite{BTB22}. That is, instead of $u^*(x)$, one can provide $u_x \in \mathbb{R}^n$ such that \begin{equation} G(u^*(x))+\langle Au^*(x),x\rangle - \big(G(u_x)+\langle Au_x,\,x\rangle\big) \leq \delta \label{eq:primal_approx}
\end{equation}
and use approximate information in GM: $\tilde{F}(x) = G(u_x)+\langle Au_x,\,x\rangle - \delta$, $\nabla \edited{\tilde{F}}(x) = A u_x$. 
\vspace{1pt}

Thereby and luckily enough, it happens that one can \emph{tune} the quality of oracles by adequately choosing the amount of computational time spent on these problems. Looking back at the above  example, it is well-known that the number $\omega$ of iterations one must undertake to solve \eqref{eq:primal} \edited{up to $\delta$ accuracy \eqref{eq:primal_approx}} scales as $\kappa(G)^{-\frac{1}{2}}\,\log(\delta^{-1})$ where $\kappa(G) = \mu(G)/L(G)$.   After $N$ steps of GM involving a sequence of such inexact gradients with parameters $\{\delta_k\}_{k=0}^{N-1}$ \edited{at iteration $k = 0,\dots,N-1$} while using a constant stepsize $L^{-1}= \edited{(}2 L(F)\edited{)}^{-1}$, it is proven in \cite{Devolder2013FirstorderMW} that for 
$$ \edited{\hat{x}_N = \frac{\sum_{k=0}^{N-1}\,(1-\kappa)^{N-1-k}\,x_k}{\sum_{k=0}^{N-1}\,(1-\kappa)^{N-1-k}}}$$
the following guarantees held 
\begin{equation}
F(\hat{x}_N) - F^* \leq \frac{M(N) + 3\,\sum_{k=0}^{N-1}\,(1-\kappa)^{N-1-k}\,\delta_k}{\sum_{k=0}^{N-1}\,(1-\kappa)^{N-1-k}}\label{eq:init_devolder}
\end{equation}
with $\kappa = 4^{-1}\kappa(G)$, $M(N) = 2^{-1}\,LR^2\,(1-\kappa)^N$ and $R$ initial distance to a minimizer of \eqref{eq:example_intro}.
One observes the additive impact of such inexactness on GM's convergence. \vspace{5pt}
\\
It is worth noticing that inexactness also naturally occurs in the context of stochastic gradient methods. Within such framework one can \emph{tune} stochastic gradient and/or \edited{H}essian's bias by averaging more or less sample gradients and/or \edited{H}essians \cite{Kavis2019UniXGradAU,Kohler2017SubsampledCR}. \\

Despite the fact that convergence guarantees under a different level of inexactness at each iteration are well-established in the literature, very few works devise specific inexactness schedules as in \cite{iALM21,Doikov2020InexactTM}. This can provably constitute a miss of opportunity. Akin to our introductory example, let us assume that we dispose of relations linking the quality of oracles ($\delta$) with the computational efforts invested in the creation of their output ($\omega$). It is possible to retrieve optimal inexactness schedules taking into account the \emph{trade-off} between the computational price and the harm in terms of convergence guarantees, e.g.\@ \eqref{eq:init_devolder}, of the prescribed oracle precision. We detail our optimality criteria in Section \ref{sec:opt_criteria}. 
Informally, we aim in this work at answering the question:
\begin{center}
    \textit{How can we make the most of a computational budget \\ \edited{when using optimization methods dealing with oracle inexactness?}}
\end{center}
\vspace{-15pt}
\subsection{Related work}

At first sight and as is often the case in mathematical optimization research, the goal of our work can be summed up simply as an improvement of a worst-case convergence bound. Indeed, under the models we motivate in Section \ref{sec:TOM} and the assumptions we introduce in Section \ref{sec:opt_criteria}, we solve a specific instance of \emph{non-linear allocation} problems to provide our enhanced inexactness schedules in terms of convergence guarantees per computational cost unit. \\ \\
How straightforward might it sound, to the best of our knowledge there actually exists not so many previous works on that subject, i.e.\@ \emph{trade-off} optimality for iterative algorithms dealing with controllable inexactness. We contrast however this last sentence by reassuring that a bunch of new papers propose criteria involving parameter fixed \emph{relative} inexactness (see \cite{Kong21,Potzl22} and references therein). In opposition with the framework of \emph{absolute} inexactness, computational costs to achieve \emph{relative} inexactness are by nature prone to uncertainty and worst-case guarantees dependencies on this parameter often appears quite opaque. One usually leaves the relative inexactness parameter as fixed to a conservatively low wished terminal accuracy. Authors of \cite{BTB22} studied a mix between \emph{relative} and \emph{absolute} inexactness, highlighting the positive impact of incorporating \emph{absolute} oracle accuracy.\\ \\ With a similar approach to ours, \cite{Machart2012OptimalCT} focuses on (Accelerated) Proximal Gradient algorithms as analyzed by \cite{Schmidt2011ConvergenceRO} in their seminal paper. Unlike us, they do not show explicit closed-forms, \edited{i.e. analytical value for $\delta_k$, method's level of inexactness at iteration $k =0,\dots,N-1$ as in \eqref{eq:init_devolder}}, and their results remain mainly theoretical. \\
More recently, \cite{iALM21} comes up with specific schedules for the inexactness in the auxiliary subproblems they deal with in order to lower the overall computational load after $N \in \mathbb{N}$ iterations. Our more general results encompass theirs if we were concerned with the same context of inexact Augmented Lagrangian. As a byproduct of an asymptotical analysis, i.e.\@ $N \to \infty$, \cite{Villa13} whose follow-up resides in \cite{BTB22}, suggests to use inexactness schedules that decrease sufficiently fast in order to maintain (up to a logarithmic factor) the rate of convergence of the \emph{error-free}, i.e.\@ \emph{exact}, counterpart of the algorithms at scope. In \cite{Doikov2020InexactTM} were tried \emph{online} (adaptive) as well as purely \emph{offline} schedules in what concerns the accuracy of the inexact higher-order tensor steps. Both last works provide \emph{non-constant} inexactness schedules but none of them brings into balance (inexact) oracles' computational complexities.
\subsection{Contributions} 
Let us consider an iterative algorithm involving controllable inexactness, e.g\@ GM as depicted in the previous paragraph. We can state our main contributions as follows.
\begin{itemize}
\item
Firstly, considering fixed the number $N\in \mathbb{N}$ of inexact oracles calls, we propose a systematic \emph{offline} procedure to devise the amount \edited{$\delta_k$} of inexactness to adopt at each iteration $k \in \{0,\dots,N-1\}$ based on an oracle cost model and algorithm's guarantees. To that end, we solve a rather general \emph{non-linear assignment} problem, result of independent interest. Building upon this, we present closed-forms results about the optimal inexactness schedules for a broad class of oracle cost models, \edited{directly inspired from practical scenarios.}  \\
\item
Secondly, we \edited{propose an \emph{online} heuristic extension} in which neither $N$ nor the overall computational budget allocated must be fixed beforehand. \\
\item \edited{Thirdly,} we conduct numerical experiments that sustain the validity of our approach, \edited{either \emph{offline} or \emph{online}}, by comparing it against (a) \emph{constant} inexactness schedules and (b) \emph{non-constant} inexactness schedules from the literature \cite{Villa13,BTB22}.
We emphasize on the fact that our strategy is fully implementable.
\end{itemize}
\subsection{Outline}
At the end of the present section, we clarify our notations and define the useful concept of \emph{descending rank}. In Section \ref{sec:TOM} are motivated and then explained the concept of Tunable (Inexact) Oracles, i.e.\@  we develop the models of costful (inexact) oracles and convergence under an inexactness schedule which we illustrate thanks to three examples serving as guidelines. We then proceed to Section \ref{sec:opt_criteria} in which we show all the contributions teased just above. This section represents the main content of this paper. We conclude by quantifying the computational savings of our approach on the aforementioned guideline examples in Section \ref{sec:numericals}.
\subsection{Preliminaries}
We introduce some handy notations and a definition extensively used in this paper. 

\paragraph*{Sets} $\mathbb{N} = \{1,\dots,\infty\}$ will refer to the set of strictly positive integers. $\mathbb{R}_{+}$ (\edited{respectively} $\mathbb{R}_{-}$) contains all the non-negative (\edited{respectively} non-positive) real numbers. We denote $\mathbb{R}_{++} = \mathbb{R}_{+} \backslash \{0\}$ and $\mathbb{R}_{--} = \mathbb{R}_{-} \backslash \{0\}$. Let $n \in \mathbb{N}$, we define 
$[n] = \{0,\dots,n-1\}$.

\paragraph*{Sequences} Our enumerating indices start at $0$. We write column vectors $v \in \mathbb{R}^n$ as $v = (v_0,\dots,v_{n-1})^T$.
Depending of the context, we will equivalently write $v \equiv \{v_k\}_{k=0}^{n-1}$.

\paragraph*{Shorthands} $\mathbf{e} \in \mathbb{R}^n$ stands as the vector of size $n$ full of $1$. For any $\mathcal{S} \subseteq [n]$, $\mathbf{e}_{\mathcal{S}}$ is defined as follows: $(\mathbf{e}_{\mathcal{S}})_k = 1$ if $k \in \mathcal{S}$ and $0$ otherwise.
On the other hand, $\mathbf{0} \in \mathbb{R}^n$ stands as the vector of size $n$ full of $0$, i.e.\@ $\mathbf{0} = \mathbf{e}_{\emptyset}$.
For any $v \in\mathbb{R}^n$, we define the subvector  $(v)_{\mathcal{S}}$ of size $|\mathcal{S}| \leq n$ whose values are taken from $v$
at indices in $\mathcal{S}$. 
 \paragraph*{Operations on vectors} Let $p \in \mathbb{R}$ and $u,v \in \mathbb{R}^n$. We proceed to entry-wise operations like 
$v^p = (v_0^p,\dots,v_{n-1}^p)^T$,
$u \odot v = (u_0\cdot v_0,\dots,u_{n-1}\cdot v_{n-1})^T$,
$u^T v = \sum_{k=0}^{n-1}\,u_k\cdot v_k$ and 
$|v| = (|v_0|,\dots,|v_{n-1}|)^T$
More generally, applying any operator $o : \mathbb{R} \to \mathbb{R}$ in an element-wise fashion on a vector $v$ is authorized: 
 $o(v) = (o(v_0),\dots,o(v_{n-1}))^T$.
\paragraph*{$p$-norms} For any $p \geq 1$ and any  $v \in \mathbb{R}^{n}$, the $p$-norm of $v$ reads 
$||v||_p = (\mathbf{e}^T |v|^p)^{\frac{1}{p}}$.\\
As usual, we extend the notation with $p = \infty$ and set 
$||v||_{\infty} \overset{\Delta}{=} \max\{|v_k|\,|\,k \in [n]\}$.\\
In what concerns matrices, the $p$-norm of any element $V \in \mathbb{R}^{n \times d}$ translates to $$||V||_p = \sup_{u\not = \mathbf{0}}\frac{||Vu||_p}{||u||_p}$$
\edited{Unless stated otherwise, we understand norms and distances as Euclidean ones ($p=2$) throughout this paper, i.e. $||v|| := ||v||_2$ and $||V|| := ||V||_2$.}
\paragraph*{Lipschitz continuity} $g : \mathbb{R}^n \to \mathbb{R}^d$ is $L$ Lipschitz (continuous) with respect to a $p$-norm if for any $y,x \in \text{dom}(g)$, 
$||g(y)-g(x)||_{p^*} \leq L\,||y-x||_p$ with $p^*=p\cdot(p-1)^{-1}$.
\paragraph*{Inequalities}
Let $\mathcal{L} : \Xi \subseteq \mathbb{R} \to \mathbb{R} \cup \{\infty\}$, $\mathcal{J} : \Xi \subseteq \mathbb{R}\to \mathbb{R} \cup \{\infty\}$ be two functions. \edited{Let $\text{dom}(\mathcal{L}) = \text{dom}(\mathcal{J})$,} we consider the following: 
\begin{align*}
\mathcal{L} \succeq \mathcal{J} &\Leftrightarrow \mathcal{L}(\sigma) \geq \mathcal{J}(\sigma)\edited{,} \hspace{5pt} \forall \sigma \in \Xi\\
\mathcal{L} \succ \mathcal{J} &\Leftrightarrow \mathcal{L}(\sigma) > \mathcal{J}(\sigma)\edited{,} \hspace{5pt}\forall \sigma \in \text{dom}(\mathcal{J}) 
\end{align*}
For any $u,v \in \mathbb{R}^n$, we also note:
\begin{align*}
u \succeq v \Leftrightarrow u_k\geq v_k\edited{,} &\hspace{5pt} \forall k \in [n]\\
u \succ v \Leftrightarrow u_k > v_k\edited{,}&\hspace{5pt} \forall k \in [n]
\end{align*}

\definition
\label{descending_rank}
(\emph{descending rank}) Let $\nu \in \mathbb{R}^n$ and let $\rho : [n] \to [n]$ be a bijection such that the vector $\hat{\nu} = (\nu_{\rho^{(-1)}(0)},\dots,\nu_{\rho^{(-1)}(n-1)})^T$ is sorted in \emph{descending} mode.  \\For any $k \in [n]$, we call $\rho(k)$ the \emph{descending rank} of $\nu_k$ (with respect to $\rho$).

\remark It comes that if $\rho(k) = j$ then $\nu_k$ is, according to the sorting induced by $\rho$, the $(j+1)$-th biggest element of $\nu$. 

\section{Tunable (Inexact) Oracles}
\label{sec:TOM}
Here below we aim at defining the class of Tunable Oracles Methods (TOM) from which one can benefit by adapting the amount of inexactness in the oracles involved at each iteration. Prior to this goal, we recall some findings about methods incorporating inexactness and we introduce our assumed oracle cost model. Finally, we substantiate the concept of Tunable Oracles Methods with thee complete examples serving as common thread.
\subsection{Impact of Inexact Oracles}
The worst-case behaviour analysis of iterative methods relying on inexact oracles displays some favorable structure. Let $N \in \mathbb{N}$ be the number of performed iterations, one defines the total amount of inexactness at iteration $k \in \{0,\dots,N-1\}$ as $\delta_k$. Researchers come up with convergence models
\begin{equation}
\mathcal{C}(N) \leq \mathcal{E}(\{\delta_k\}_{k=0}^{N-1})
\label{eq:convergence_model}
\end{equation}
where
\begin{itemize}
    \item $\mathcal{C}(N)$ acts as a positive \emph{gauge} one aims to minimize
    \item $\mathcal{E} : \mathbb{R}^{N}_+ \to \mathbb{R}_+$ informs about convergence under the impact of a sequence $\{\delta_k\}_{k=0}^{N-1}$
\end{itemize}

\hspace{-10pt}$\mathcal{C}$ gauges include gradient mapping norms \cite{PD18}, functional gaps \cite{Stonyakin20,Devolder2013FirstorderMW}. \\As in Example \ref{introductory_example} inequality \eqref{eq:init_devolder}, a possible instance for our model could show up as  
$$ \underbrace{F(\hat{x}_N)-F^*}_{\mathcal{C}(N)} \leq \underbrace{\frac{M(N) + 3\,\sum_{k=0}^{N-1}\,(1-\kappa)^{N-1-k}\,\delta_k}{\sum_{k=0}^{N-1}\,(1-\kappa)^{N-1-k}}}_{\mathcal{E}(\{\delta_k\}_{k=0}^{N-1})}$$
\remark \label{inexact_model_comments2} Within the \emph{error-free} framework, i.e. $\delta_k = 0$ for every integer $k$, $$\lim_{N \to \infty}\, \mathcal{E}(\{0\}_{k=0}^{N-1}) = 0 $$
When accounting for errors, one theoretically observes convergence only up to $\mathcal{E}(\{\delta_k\}_{k=0}^{N-1})$ accuracy. It can happen that the schedule $\{\delta_k\}_{k=0}^{N-1}$ does not decrease sufficiently fast leading to a well-known phenomenon referred to as \emph{error-accumulation} in the literature \cite{Devolder13}. This latter translates to 
$$\lim_{N \to \infty}\,\mathcal{E}(\{\delta_k\}^{N-1}_{k=0}) = \infty$$ 
\subsection{Cost of Inexact Oracles}
Without loss of generality, one can define a \emph{reference} inexactness $\bar{\delta} > 0$ together with constants $0\leq m < 1 < M < \infty$ such that the inexactness of the oracles fall in the segment $\Xi := [m\,\bar{\delta},M\,\bar{\delta}]$. This means that at each iteration $k \in \{0,\dots,N-1\}$, one allows the user to freely pick up any $\delta_k \in \Xi$. As previously unveiled, such request costs a computational tribute, namely $\mathcal{B}_k(\delta_k) \geq 0$. 
Suitable for a variety of applications \cite{BTB22,PD18,Kavis2019UniXGradAU}, we suggest the oracle cost model 
\begin{equation}
\mathcal{B}_k(\delta_k) = b_k \,h(\delta_k)
\label{eq:cost_model}
\end{equation}
where 
\begin{itemize}
    \item $b_k > 0$ denotes the cost distortion of iteration $k$
    \item $h : \Xi \to \mathbb{R}_+$ dictates how the cost fluctuates with $\delta_k$
\end{itemize} 
\newpage
\hspace{-12pt}Recalling once again the introduction, one identifies the \emph{a priori} number $\omega_k$ of inner-iterations to obtain $\delta_k$ inexact information about $F$ as a multiple of $$ \mathcal{B}_k(\delta_k) = \underbrace{\sqrt{\kappa(G)^{-1}}}_{b_k}\,\underbrace{\log(\delta_k^{-1})}_{h(\delta_k)}$$ 

\remark \label{conventional} When unknown or when no argument justifies that any iteration turns out to be less expensive than another, we arbitrarily set $b_k = 1$ for any integer $k \geq 0$. Let us also point out that in the absence exogenous indication, $m=0$ and $M=\infty$. 
\vspace{-10pt}
\subsection{Tunable Oracle Method}
An instance of Tunable Oracle Methods (TOM) stands as a $N$ steps \emph{iterative} algorithm $\mathcal{A}$ whose iterations involve a notion of \emph{controllable} $\delta$-inexactness as in \eqref{eq:cost_model} such that $\mathcal{A}$ converges in the sense of \eqref{eq:convergence_model}. Furthermore, in order to truly exploit such \emph{controllable} feature, we require the explicit knowledge of $\{b_k\}_{k=0}^{N-1}$, $\mathcal{E}$ and $h$, up to a multiplicative constant factor. 
Such desire emphasizes the \emph{offline} nature focus of the paper at this stage. We make use of \emph{a priori} information about the behaviour of $\mathcal{A}$ and the expected cost one should encounter while requesting a schedule of $\{\delta_k\}_{k=0}^{N-1}$ inexact oracles. \\We explain later on how to take advantage of FOM in an \emph{online} setting.
\subsection{Guideline examples}
Throughout the sequel, we \edited{make use of} the following: 
\begin{itemize}
    \item $\Psi : \mathbb{R}^d \to \mathbb{R} \cup \{\infty\}$ proper closed convex function, not necessarily smooth.
    \item $\ell: \mathbb{R}^{q} \to \mathbb{R}$ closed convex function, $L_\ell < \infty$ Lipschitz continuous and simple,\\ i.e.\@ its proximal operator can be computed at negligible cost\footnote{Existence of a closed-form or at the expense of an easy one dimensional segment-search.}.
    \item $c : \mathbb{R}^d \to \mathbb{R}^{q}$ a smooth map with $L_{\nabla c} < \infty$ Lipschitz continuous Jacobian $\nabla c$.
\end{itemize} 
Combining these ingredients, we finally introduce $f = \ell \circ c$ and $F = f + \Psi$ nonconvex, nonsmooth in the general case. We will assume that $$F^* = \min_{x\in\mathbb{R}^d}\,F(x) > - \infty$$

\example
\label{iAFB} (\emph{composite convex optimization with inexact proximal operator}) \\
The inexact Accelerated Forward-Backward algorithm (iAFB) from
\cite{BTB22} tackles so called convex composite problems, ubiquitous in image processing \cite{CP16}. The specificities of this class read: $q=1$, $\ell = \text{id}_{\mathbb{R}}$, convexity (\edited{respectively} $\mu\geq0$ strong-convexity) of $c$ (\edited{respectively} $\Psi$) thus $f = c$ and $\nabla f$ is $L_f=L_{\nabla c}$ Lipschitz continuous. Among other appealing features, iAFB presented therein allows for $\Psi$ whose proximal mapping is not simple, e.g.\@ in \emph{sparse overlapping groups regularization} \cite{Jen10}. At $z\in \mathbb{R}^d$, $\lambda > 0$, the primal-dual pair of problems related to the proximal step of a function $\phi$ translate to 
\begin{align*}
    \min_{x \in \mathbb{R}^d}&\,\bigg\{\Phi_p(x;\lambda,z,\phi) := \frac{1}{2}\,||x-z||^2 + \lambda\,\phi(x)\bigg\} & \hspace{3pt} (P)\\
    \max_{v \in \mathbb{R}^d}&\,\bigg\{\Phi_d(v;\lambda,z,\phi) := \frac{1}{2}\,(||z||^2-||z-\lambda v||^2) - \lambda\,\phi^*(v)\bigg \} & \hspace{3pt} (D)
\end{align*}
where $\phi^*$ is the Fenchel conjugate of $\phi$.
iAFB produces iterates $x_k$, $y_k$, $z_k$, $v_k$ \edited{thanks} to coefficients $\lambda_k \in \mathcal{O}(L_{f})$ and $A_k \in \Omega(k^2)$ for any integer $k\geq 0$. \edited{Either iAFB employs predefined sequence of stepsizes $\{\lambda_k^{-1}\}_{k \in \mathbb{N}}$ and then $A_k$, accounting as certificate sequence, can be computed in advance, i.e. \emph{offline}, or it adopts Armijo line-search to adapt to local smoothness $\lambda_k$ and one only has access to $A_k$ for iterations $k^{'} \geq k$, in an \emph{online} fashion.}
Following authors' notations, $(x_{k+1},v_{k+1})$ stems as a $\delta_k$ inexact output from the proximal step oracle at iteration $k \geq 0$ if \begin{equation}
\text{PD}_{\lambda \phi}(x,v;z) := \Phi_p(x;\lambda,z,\phi)-\Phi_d(v;\lambda,z,\phi) \leq \delta_k \label{eq:first_requirement} \end{equation} 
with $x = x_{k+1}$, $v = v_{k+1}-\mu \,x_{k+1}$, $\lambda = \frac{\lambda_k}{1+\lambda_k \mu}$, $\phi = \Psi-\frac{\mu}{2}||\cdot||^2$ and $z = \frac{y_k-\lambda_k \nabla f(y_k)}{1+\lambda_k \mu}$. \vspace{5pt}

\hspace{-12pt}Under this notion of inexactness, after $N \in \mathbb{N}$ steps, the following guarantees hold \begin{equation}
\underbrace{F(x_N) - F^*}_{\mathcal{C}(N)} \leq \underbrace{\frac{4R^2 + \sum_{k=0}^{N-1}\,(A_{k+1}\cdot(1+\mu\lambda_k)^2\cdot\lambda_k^{-1})\,\delta_k }{A_N}}_{\mathcal{E}\big(\{\delta_k\}_{k=0}^{N-1}\big)}
\label{eq:convergence_model_example_sub}
\end{equation}
where $R < \infty$ denotes the distance from $x_0$ to the set of global minimizers of $F$.\\ 
The presumed oracle cost, i.e.\@ the work $\mathcal{B}_k(\delta_k)$ needed to produce $(x_{k+1},v_{k+1})$ fulfilling \eqref{eq:first_requirement} remains problem dependent. It is also highly influenced by the technique employed to solve the pair (P) / (D). In \cite{BTB22}, authors deal with \texttt{CUR} factorization problem with \emph{sparse overlapping groups regularization}. They simply use FISTA, a \emph{fast} first-order method, to solve (D) yielding a linked sequence of primal recovered iterates (P). In the present setting, it is shown in \cite{Lu16} that the primal-dual gap decreases sublinearly as $\omega_k^{-1}$ where $\omega_k$ denotes the number of steps of FISTA in (D). Then, for an arbitrarily chosen $b_k = 1$ (see Remark \ref{conventional}), we model the cost as
\begin{equation} 
\mathcal{B}_k(\delta_k) = \underbrace{\delta_k^{-1}}_{h(\delta_k)}
\label{eq:cost_model_example_sub}
\end{equation}

\example
\label{example_detailed} (\emph{composition of convex functions optimization}) The paper \cite{PD18} considers the most general setting for which global optimality is, obviously, out of reach. Let us focus on a single method analyzed therein, the inexact Prox-Linear algorithm, iPL in short \edited{terms}. Given $z \in \text{dom}(\Psi)$ and $t>0$, one defines the functional $$F_{t}(\cdot;z) = \Psi(\cdot) + \ell(c(z)+\nabla c(z)(\cdot-z)) + \frac{t^{-1}}{2}||\cdot-z||^2$$ We call $\delta$ inexact solution $(x_+,\xi)$ of the minimization of $F_{t}(\cdot\,;\,z)$ a pair fulfilling the properties: $||\xi||\leq \delta$ and $$x_{+} \in \arg \min_{x\in\mathbb{R}^d}\Psi(\cdot) + \ell(\xi + c(z)+\nabla c(z)(\cdot-z)) + \frac{t^{-1}}{2}||\cdot-z||^2$$ In the case of an exact $(x_+,0)$ (i.e.\@ $\delta=0$ inexact) solution, the proximal gradient mapping $\mathcal{G}_{t}(x)$ at $z$ with stepsize \edited{$t\leq(L_\ell\,L_{\nabla c})^{-1}$} stands as the vector $ \mathcal{G}_{t}(x) = t^{-1}(x-x_+)$ if and only if $F(x_+) \leq F_t(x_+;z)$. Its norm constitutes a measure of stationarity \cite{PD18} that generalizes the gradient norm in the smooth unconstrained optimization framework. When calling up $\{\delta_k\}_{k=0}^{N-1}$ inexact solutions at successive iterates $z=x_k \in \text{dom}(\Psi)$ and stepsizes \edited{$t = t_k \in ]0,(L_\ell\,L_{\nabla c})^{-1}]$}, iPL produces iterates $\{x_{k}\}_{k\geq0}$ such that: 
\begin{equation}
\underbrace{\min_{k^{'} \in \{1,\dots,N\}}\,||\mathcal{G}_{t_k}(x_{k^{'}})||^2}_{\mathcal{C}(N)} \leq \underbrace{\frac{\sum_{k=0}^{N-1}4\,t_k^{-1}\Big(F(x_k)-F(x_{k+1}) + 4\, L_\ell\,\delta_k\Big)}{N}}_{\mathcal{E}\big(\{\delta_k\}_{k=0}^{N-1}\big)}
\label{eq:convergence_model_example}
\end{equation}
Again, the oracle cost should be derived from the complexity of the inner method used to obtain the sequence of $\{\delta_k\}_{k=0}^{N-1}$ inexact solutions at pairs $\{(x_k,t_k)\}_{k=0}^{N-1}$. When no further specific structure is taken as granted, excepted an easy proximal mapping of $\Psi$ (unlike the previous example \eqref{eq:convergence_model_example_sub}), \cite{PD18} suggests to exploit duality. At any iteration $k \geq 0$, one can compute a $\delta_k$ inexact pair $(x_{k+1},\xi_{k+1})$ as previously explained by finding a subgradient $\xi_{k+1}$ of the (negated) Fenchel conjugate function of $F_{t_k}(\cdot\,;\,x_{k})$ whose norm does not exceed $\delta_k$. With the oracles at hand and accounting for the fact that this conjugate boils down to a sum of a smooth convex term and a proximable nonsmooth convex function, A-HPE from \cite{Monteiro13} ensures a minimal subgradient norm of order  $\mathcal{O}(t_k ||\nabla c(x_k)||^2 \,\omega_k^{-\frac{3}{2}})$ after $\omega_k$ A-HPE iterations.\\  \\ 
\edited{Hence, it comes that the number of A-HPE iterations required to get a $\delta_k$ inexact pair scales as $t_k^{\frac{2}{3}}\, ||\nabla c(x_k)||^{\frac{4}{3}}\,\delta_k^{-\frac{2}{3}}$. Obviously, one does not know in advance the value $||\nabla c(x_k)||$ since it relies on $x_k$, only computable from iteration $k \geq 0$. Remark \ref{conventional} then suggests that any $\gamma >0$, e.g. $\gamma=1$, can serve as artificial upper-bound so that either iPL uses predefined stepsizes $\{t_k\}_{k\in\mathbb{N}}$ (\emph{offline}) or adjusted stepsizes (\emph{online}).}
\begin{equation} 
\mathcal{B}_k(\delta_k) = \underbrace{t_k^{\frac{2}{3}}}_{b_k}\,\underbrace{\delta_k^{-\frac{2}{3}}}_{h(\delta_k)}
\label{eq:cost_model_example}
\end{equation}
Note that if $\text{dom}(\Psi)$ is bounded with known diameter, one can use regularization and apply Nesterov fast gradient method FGM on strongly-convex composite objectives in order to obtain an enhanced  $t_k^{\frac{1}{2}}\,||\nabla c(x_k)||\,\delta_k^{-\frac{1}{2}}$ complexity \cite{Yu13}. We emphasize that \emph{stopping criteria} are readily available to check whether $\delta_k$ accuracy has been reached. \\
\vspace{-5pt}
\example
\label{example_detailed_2} \edited{(\emph{robust optimization on convex hull}) Closely related to \cite{BTB22}, \cite{Stonyakin20} extends the analysis of Fast Gradient Method (FGM) under the presence of inexactness by allowing $f$ to be smooth and convex relative to some Legendre kernel function (see \cite{Lu16}), covering the so-called Bregman setting. Therefore, their results also encompass the Euclidean setting which we stick to for the sake of simplicity. We assume that $\Psi$ represents an indicator function of a convex subset $X \subseteq \mathbb{R}^d$, $q=1$, $\ell = \text{id}_{\mathbb{R}}$ and $c$ is $\mu \geq 0$ strongly-convex. That is, $f$ is $\mu \geq 0$ strongly-convex and $\nabla f = \nabla c$ is $L_f = L_{\nabla c}$ Lipschitz continuous. Let $\Theta = \{\theta_i\}_{i=1}^{n}$ be a collection of $n \in \mathbb{N}$ vectors from $\mathbb{R}^d$ dubbed as \emph{scenarios} and let $\sigma > 0$. In \emph{robust optimization}, one might be interested in minimizing the function 
$$f : X \to \mathbb{R},\hspace{2pt} x \to f(x) = \frac{\mu}{2}\,||x||^2 + \max_{\theta \in \textbf{conv}(\Theta)}\, \langle\theta,x\rangle - \frac{\sigma}{2}||\theta- \bar{\theta}||^2$$
for some \emph{anchor scenario} $\bar{\theta}$, e.g. $\bar{\theta} = n^{-1}\,\sum_{i=1}^{n}\,\theta_i$. \newpage In other words, one would like to minimize a (regularized) linear objective taking into account that the cost vector could be any convex combination of previously encountered costs $\{\theta_i\}_{i=1}^{n}$. Akin to Example \ref{introductory_example}, we deduce that the \emph{exact} gradient of $f$ at any $x \in X$ reads $\nabla f(x) = \mu x + \theta^*(x)$ where $\theta^*(x) = \arg \max_{\theta \in \textbf{conv}(\Theta)}\, \langle \theta,x\rangle - \frac{\sigma}{2}||\theta- \bar{\theta}||^2$ and gradient's Lipschitz constant $L_{f} = \sigma^{-1}$.} \\ \vspace{3pt} 

\hspace{-10pt}\edited{An approximate maximizer $\theta_x \in \textbf{conv}(\Theta)$ of the problem defining $f$ at $x$ that verifies
\begin{equation}
\langle(\theta^*(x) - \theta_x), x\rangle + \frac{\sigma}{2}\big(||\theta_x-\bar{\theta}||^2-||\theta^*(x)-\bar{\theta}||^2\big) \leq \delta
\label{eq:approx_inner}
\end{equation}
can be used to construct $\tilde{f}(x) = \frac{\mu}{2}||x||^2 + \langle\theta_x, x\rangle - \frac{\sigma}{2}||\theta_x-\bar{\theta}||^2 \simeq f(x)$ and $\nabla \tilde{f}(x) = \mu 
x+\theta_x\simeq \nabla f(x)$, providing $ (2\delta,2L_f+\mu,\mu)$ inexact information as originally understood in \cite{Devolder13}.}
\edited{\cite{Stonyakin20} show that FGM (\cite{Stonyakin20}, Algorithm 2) involving a sequence of $\{\delta_k\}_{k=0}^{N-1}$ inexact information and fixed stepsizes produces a final iterate $x_N$ such that \begin{equation}
\underbrace{F(x_N) - F^*}_{\mathcal{C}(N)} \leq \underbrace{\frac{R^2 + 2\,\sum_{k=0}^{N-1}\,A_{k+1}\,\delta_k\,}{A_N}}_{\mathcal{E}\big(\{\delta_k\}_{k=0}^{N-1}\big)}
\label{eq:convergence_model_example_2}
\end{equation}
where, once again, $R < \infty$ denotes the distance from $x_0$ to the set of global minimizers of $F$. As for iAFB, the coefficients $A_k$ for $k\geq0$ serve as convergence certificates and are involved in subtle convex combinations of iterates within FGM. In the present setting, $A_k$'s value can be determined beforehand, $A_k\in{O}(\max\{k^2,(1+\frac{1}{4}\sqrt{\mu /(\mu+2 L_{f})})^{2k}\})$.}\\
\edited{Let $O = [\theta_1,\dots,\theta_n]^T$ and let $\hat{\kappa} := \frac{\lambda_{\text{min}}(OO^T)}{\lambda_{\text{max}}(OO^T)}$. 
By using another version of FISTA described in \cite{vdb22}, one can take advantage of possible strong-convexity, i.e. $\hat{\kappa}>0$, of the usual reformulation of the inner-problem : 
$$ \max_{\theta \in \textbf{conv}(\Theta)}\, \langle\theta,x\rangle - \frac{\sigma}{2}||\theta- \bar{\theta}||^2 = \max_{w\succeq \mathbf{0},\, ||w||_1=1}\,\langle O^Tw ,x\rangle-\frac{\sigma}{2}||O^Tw -\bar{\theta}||^2 $$Assuming that $\hat{\kappa}>0$, just as in Example \ref{introductory_example}, one can link the work $\omega_k$ to obtain $\delta_k$ accurate $\theta_{x_k}$ at iteration $k \geq 0$ by writing
\begin{equation} 
\mathcal{B}_k(\delta_k) =  \underbrace{\sqrt{\hat{\kappa}^{-1}}}_{b_k}\,\underbrace{\log(\delta_k^{-1})}_{h(\delta_k)}
\label{eq:cost_model_example_2} 
\end{equation}
whereas in the absence of strong-convexity, i.e. $\hat{\kappa}=0$, one would rather set $\mathcal{B}_k(\delta_k) = \sqrt{\lambda_{\text{max}}(OO^T)} \,\delta_k^{-\frac{1}{2}}$ as common for smooth convex optimization \cite{Yu13}.
We explain in Section \ref{sec:numericals} how one can easily monitor the quality of an approximate solution for \eqref{eq:approx_inner}.}
\remark
\label{continuity_work} Usually, the workload $\omega \propto \mathcal{B}(\delta)$ to obtain (the output) of $\delta$ inexact oracles stays \emph{by nature} an integer quantity, e.g.\@ a number of inner-iterations (Example \ref{iAFB}, \ref{example_detailed} and \ref{example_detailed_2}).  For the sake of simplicity however, we will accept that it varies continuously as in the formulas \eqref{eq:cost_model_example_sub}, \eqref{eq:cost_model_example}, \eqref{eq:cost_model_example_2}.

\section{Optimal Inexactness Schedules}
\label{sec:opt_criteria}

Now that the impact and the cost of inexact oracles have been introduced, we can elaborate about the main objective of this paper. We assume for the time being that $N \in \mathbb{N}$ is fixed. One can try to suffer the smallest possible effect from inexact oracles in order to ensure the best worst-case convergence upper-bound. Obviously, one would like each $\delta_k$ to match its best value $m \bar{\delta}$. However, sometimes one cannot afford a schedule $\{\delta_k = m \bar{\delta}\}_{k=0}^{N-1}$ if one limits the overall computational budget. On the other hand, when not obliged to, it is not advisable to ask for the worst oracle accuracy at each iteration, i.e. $M\bar{\delta}$, yielding the inexactness schedule $\{\delta_k = M \bar{\delta}\}_{k=0}^{N-1}$. \\ \\
Thereby, we propose to solve a master problem that aims at devising the optimal \emph{trade-off} between the costs of oracles and worst-case guarantee harms due to their inexactness. We start by providing our blanket Assumptions A and B.
\subsection{Framework}

From now on we will make a little abuse of notation about the \emph{reference} inexactness $\bar{\delta} > 0$. Depending on the context, this latter will either depict a real value, a $N$ steps inexactness schedule $\{\delta_k = \bar{\delta}\}_{k=0}^{N-1}$ or even the corresponding vector in $\mathbb{R}^{N}_+$, i.e. $\bar{\delta}\,\mathbf{e}$.
\paragraph*{Assumption A.}
\label{assumption_inexact_model}
 We have access to $a \succ \mathbf{0}$, $\exists\,\tilde{\mathcal{E}} : \mathbb{R}_+ \to \mathbb{R}_+$ \emph{strictly increasing} with
\begin{equation}
\mathcal{E}(\delta):=
\mathcal{E}(\{\delta_k\}_{k=0}^{N-1}) = \tilde{\mathcal{E}}\Bigg(\sum_{k=0}^{N-1}\,a_k\,\delta_k\Bigg) 
\label{eq:mathcal_e_model}
\end{equation}

We now invoke Assumption B suggesting that one is able to predict the overall cost $\mathcal{B}(\delta)$ of a schedule of inexactness $\delta$. Furthermore, some technicalities about the structure of the function $h$ from \eqref{eq:cost_model} are stated.
\vspace{-10pt}
\paragraph*{Assumption B.} \label{assumption_cost_model} We have access to $b \succ \mathbf{0}$ and $h : \Xi \to \mathbb{R}_+$ \emph{differentiable},  \emph{invertible}, \emph{strictly decreasing} with
\begin{equation}
\mathcal{B}(\delta):= \sum_{k=0}^{N-1}\,\mathcal{B}_k(\delta_k)  = \sum_{k=0}^{N-1}\,b_k\,h(\delta_k)
\label{eq:mathcal_h_model}
\end{equation}
$h' : \text{dom}(h') \supseteq \text{int}(\Xi) \to \text{Im}(h')\subseteq \mathbb{R}_{-}$ must be \emph{invertible} and \emph{strictly increasing}. 

\remark
\label{equivalence} Assumption A tells us that on any subset of $\mathbb{R}^N_+$, minimizing $\mathcal{E}(\{\delta_k\}_{k=0}^{N-1})$  boils down to minimizing $\sum_{k=0}^{N-1}\,a_k\,\delta_k$, i.e.\@ a minimizer of  $a^T\,\delta$ stays optimal for $\mathcal{E}(\delta)$. 

\renewcommand{\arraystretch}{1.6}
\remark
\label{illustration_example} \edited{In what concerns our examples, one can identify problem dependent constants $C_1,C_2 > 0$ such that $\tilde{\mathcal{E}}$ admits a shared structure $s \to \tilde{\mathcal{E}}(s) = C_1+C_2\,s$.}  \newpage
\begin{table}[h!]
\centering
\caption{Illustration of Assumptions A and B.}
\begin{tabular}{c|c|c|c}
 & $h(\delta_k)$ & $\propto a_k$ &$\propto b_k$\\
\hline
\hline
  Example \ref{iAFB} & $\delta_k^{-1}$ & $A_{k+1}\cdot \edited{(1+\mu \lambda_k)^2} \cdot \lambda_k^{-1}$ & $1$ \\
  \hline
   Example \ref{example_detailed} &  $\delta_k^{-\frac{2}{3}}$& $t_{k}^{-1}$& $t_k^{\frac{2}{3}}$\\
   \hline
   Example \ref{example_detailed_2} & $\log(\delta_k^{-1})$ / $\delta_k^{-\frac{1}{2}}$ & $A_{k+1}$ &$1$\\ 
\end{tabular}
\label{tab:my_tabular_AB}
\end{table}

In their adaptive \edited{\emph{online}} versions, sometimes practically more attractive, parameters $\lambda_k$ (iAFB) and $t_k$ (iPL) are not known in advance since they rely on line-searches at iteration $k\geq0$. Unfortunately, they influence the values of $a_k/b_k$'s. Thenceforth, Example \ref{iAFB} and \ref{example_detailed} would fail to satisfy Assumption A and/or B. Nevertheless, as already argued, one can fix $\lambda_k \leq L_{\nabla c}^{-1}$ (iAFB), $t_k^{-1} \geq L_\ell\cdot L_{\nabla c}$ (iPL) for every $k \geq 0$ and avoid the line-searches. In such \edited{\emph{offline}} circumstances, both $\{a_k\}_{k\geq0}$ and $\{b_k\}_{k\geq0}$ become accessible and the assumptions are fulfilled. If taken constants, i.e. for every $k\geq 0$, $\lambda_k = \lambda$, $t_k=t$ for well-chosen $\lambda, t \in \mathbb{R}_+$, then $A_k \in \mathcal{O}(\max\{k^2,(1-\sqrt{\mu / L_{f}})^{-k}\})$ for iAFB, $A_k \in \mathcal{O}(\max\{k^2,(1+\frac{1}{4}\sqrt{\mu /(\mu+2 L_{f})})^{2k}\})$ in FGD and $t_k \in \mathcal{O}(1)$ in iPL.

\subsection{Master Problems}
\label{optimal_schedules}
\paragraph*{Accuracy controlled}
In its most standard version, we design a master problem \edited{$\delta^*(N,\bar{\delta},m,M)$} for which the degrees of freedom reside in the accuracies of the $N$ iterations of our TOM, i.e\@ $\delta \in \mathbb{R}_+^N$. Given a \emph{reference} $\bar{\delta} \in \mathbb{R}_+$ precision, we implicitly deduce the total allocated computational budget as $\sum_{k=0}^{N-1}\,\mathcal{B}_k(\bar{\delta}) = \big(\sum_{k=0}^{N-1}\,b_k\big)\,h(\bar{\delta})$.\\Controlling $\delta \in [m \bar{\delta},\,M \bar{\delta}]^N$, we minimize the bound $\mathcal{E}(\delta)$ under the budget constraint $\mathcal{B}(\delta) = \mathcal{B}(\bar{\delta})$. Taking into account Assumptions A, B and Remark \ref{equivalence}, one translates
\begin{equation}
\edited{\delta^*(N,\bar{\delta},m,M)} \in \arg \min_{m\, \bar{\delta}
\, \preceq \, \delta \,\preceq\, M\,\bar{\delta}}
\,\sum_{k=0}^{N-1}\,a_k\,\delta_k \hspace{5pt}\text{s.t.}\hspace{5pt} \sum_{k=0}^{N-1}\,b_k\,h(\delta_k) = \mathcal{B}(\bar{\delta})
\label{eq:master_problem}
\end{equation}

\subsubsection{General solutions}
 We present a first theorem which will proved to be useful to guarantee some consistency in the optimal schedules.
Its scope encompasses more general structured \emph{non-linear} resource allocation problems. Related results can be found deeply rooted in the literature, the reader should refer to \cite{Mura98} and the references therein.

\begin{theorem} \label{consistency} Let $N \in \mathbb{N}$, $\Xi = [l,u] \subseteq \mathbb{R}$ and $D > 0$. Let $\{n_k\}_{k=0}^{N-1}$ and $\{d_k\}_{k=0}^{N-1}$ be two sequences of continuously differentiable functions \edited{on $\Xi$} \edited{and $\{n_k\}_{k=0}^{N-1}$ elements are furthermore strictly} increasing on $\Xi$. If for every $k \in [N]$, the ratio $\mathcal{I}_k = n'_k / d'_k$
is strictly \edited{increasing} and \edited{strictly} negative on $\Xi$ then any solution $\sigma^*$ of \begin{equation} \min_{\sigma \in \Xi^N}\,\sum_{k=0}^{N-1}\,n_k(\sigma_k) \hspace{5pt}\text{s.t.}\hspace{5pt} \sum_{k=0}^{N-1}\,d_k(\sigma_k) = D
\label{eq:abstract_general}
\end{equation}
admits as a property that for all pairs of indices $(k_1,k_2) \in [N]^2$, 
\begin{equation}
\mathcal{I}_{k_{1}} \succeq  \mathcal{I}_{k_{2}} \Rightarrow \sigma^*_{k_{1}} \geq \sigma^*_{k_{2}}
\label{eq:consistency}
\end{equation}
Moreover, 
\begin{equation}
\mathcal{I}_{k_{1}} \succ \, \mathcal{I}_{k_{2}} \hspace{3pt}\wedge \hspace{3pt} \sigma_{k_{1}}^*,\sigma_{k_{2}}^* \,\in\, ]l,u[ \,\Rightarrow \sigma^*_{k_{1}} > \sigma^*_{k_{2}}
\label{eq:consistency2}
\end{equation}

\end{theorem}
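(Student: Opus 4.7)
My plan is to derive first-order optimality conditions at the minimizer $\sigma^*$ via the KKT framework, and then convert them into ordering information by combining the pointwise hypothesis on the $\mathcal{I}_k$'s with their strict monotonicity. Since $\mathcal{I}_k = n'_k/d'_k$ is well-defined and strictly negative on $\Xi$, and since $n'_k > 0$ by the assumed strict monotonicity of $n_k$, each $d'_k$ is of constant non-zero sign on $\Xi$; hence the gradient of the single equality constraint $\sum_k d_k(\sigma_k) = D$ never vanishes on the feasible set, LICQ holds automatically, and KKT delivers a scalar Lagrange multiplier $\lambda^* \in \mathbb{R}$ together with non-negative box multipliers $\mu^l_k, \mu^u_k$ subject to complementary slackness. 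Dividing the resulting stationarity relation $n'_k(\sigma^*_k) + \lambda^* d'_k(\sigma^*_k) = \mu^l_k - \mu^u_k$ by $d'_k(\sigma^*_k)$ (nonzero, with sign flipping the inequalities) gives, for each index $k$, exactly one of three situations: $\mathcal{I}_k(\sigma^*_k) = -\lambda^*$ when $\sigma^*_k$ lies in $]l,u[$, $\mathcal{I}_k(l)$ sitting on one side of $-\lambda^*$ when $\sigma^*_k = l$, and $\mathcal{I}_k(u)$ on the opposite side when $\sigma^*_k = u$.

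The key consequence is that a single scalar $-\lambda^*$, independent of $k$, separates the values of the $\mathcal{I}_k$ at the optimum. To conclude, I would then perform a short case analysis on the positions of $\sigma^*_{k_1}$ and $\sigma^*_{k_2}$, each classified as interior, at the lower bound, or at the upper bound. In every sub-case, the pointwise assumption $\mathcal{I}_{k_1} \succeq \mathcal{I}_{k_2}$ evaluated at $\sigma^*_{k_1}$ or $\sigma^*_{k_2}$, combined with the KKT characterisation of the other index, yields an inequality of the form $\mathcal{I}_{k_j}(\sigma^*_{k_1})$ versus $\mathcal{I}_{k_j}(\sigma^*_{k_2})$ for some $j \in \{1,2\}$; strict monotonicity of $\mathcal{I}_{k_j}$ then converts this directly into the advertised ordering $\sigma^*_{k_1} \geq \sigma^*_{k_2}$. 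For the strict statement (\ref{eq:consistency2}), the assumption $\sigma^*_{k_1}, \sigma^*_{k_2} \in \,]l,u[$ turns both KKT relations into exact equalities, so the strict pointwise dominance $\mathcal{I}_{k_1} \succ \mathcal{I}_{k_2}$ propagates into a strict comparison of the $\mathcal{I}$-values at $\sigma^*$, which strict monotonicity immediately upgrades to the strict inequality $\sigma^*_{k_1} > \sigma^*_{k_2}$.

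The main difficulty I anticipate is not the core algebra but the boundary bookkeeping. When one coordinate is active at a bound, only a one-sided KKT inequality is available, and the pointwise ordering may then only force the second coordinate to collapse to the same bound rather than strictly beat it—an outcome still compatible with the non-strict claim but needing explicit verification. Keeping the direction of strict monotonicity of each $\mathcal{I}_k$ consistent with the sign of $d'_k$ (which dictates how dividing by $d'_k$ flips inequalities) is the last subtlety to handle with care; in the fully-interior configuration, by contrast, the argument reduces to a single line.
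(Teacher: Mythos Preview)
Your approach is correct and rests on the same ingredients as the paper's proof: first-order (KKT) stationarity conditions combined with the strict monotonicity and sign of the ratios $\mathcal{I}_k$. The only real difference is organisational. The paper argues by contradiction on a \emph{two-variable} subproblem obtained by freezing all coordinates except $\sigma_{k_1}^*,\sigma_{k_2}^*$, writes down the $2$-dimensional KKT system, and manipulates the resulting identity $\mathcal{I}_{k_1}(\sigma_{k_1}^*)/\mathcal{I}_{k_2}(\sigma_{k_2}^*)$ into a contradiction; you instead work directly with the full $N$-dimensional KKT system and observe that a \emph{single} scalar $-\lambda^*$ separates the values $\mathcal{I}_k(\sigma_k^*)$ according to whether $\sigma_k^*$ is interior or at a bound, after which the pairwise comparison is a routine case split. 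Your route is arguably cleaner and makes the ``single threshold'' structure more visible, while the paper's two-variable reduction keeps the algebra local. One small caution: your sentence ``LICQ holds automatically'' because the equality-constraint gradient never vanishes is not quite enough once some box constraints are active (at a vertex of $\Xi^N$ one has $N$ active bound gradients plus the equality gradient in $\mathbb{R}^N$); the paper does not dwell on this either, and the two-variable reduction is one way to sidestep it, but you should be prepared to justify KKT necessity more carefully in those boundary configurations.
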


\begin{proof}
    We prove Theorem \ref{consistency} in Appendix \ref{proof_theorem1}.
\end{proof}

\example \label{general_alloc_refined} \edited{The reader can get more insight thanks to the following example. \\For every $k\in[N]$, let $\sigma \to n_k(\sigma) = (k+1)^3\,\sigma$, $\sigma \to n_k(\sigma) = -\log(\sigma)$, $l=1$ and $u = 10^5$. The functional ratios $\mathcal{I}_k$ are given by $$ \mathcal{I}_k \,:\, ]l,u[ \to \mathbb{R},\hspace{2pt} \sigma \to \mathcal{I}_k(\sigma) = \frac{n_k^{'}(\sigma)}{d_k^{'}(\sigma)}=\frac{(k+1)^3}{-\sigma^{-1}} = -(k+1)^3\,\sigma^{-1}$$ On $\Xi = [l,u]$, $n_k$'s and $d_k$'s are continuously differentiable and $n_k$'s are increasing while $\mathcal{I}_k$'s are strictly increasing and strictly negative. In addition, if $k_1 \leq k_2 $,
$$ \mathcal{I}_{k_{1}} \succeq \mathcal{I}_{k_2}$$ since for every $\sigma \in \Xi$, $\mathcal{I}_{k_{1}}(\sigma) = -(k_1+1)^3\,\sigma^{-1} \geq -(k_2+1)^3\,\sigma^{-1} = \mathcal{I}_{k_2}(\sigma)$.}

\paragraph*{Reordering} In the case where all the ratios $\{\mathcal{I}_{k}\}_{k=0}^{N-1}$ can be ordered, i.e. there exists a bijective mapping $\tau : [N] \to [N] $ such that for all pair of indices $(k_1,k_2) \in [N]^2$, 
\begin{equation}
    \tau(k_1) < \tau(k_2) \Rightarrow \mathcal{I}_{k_{1}} \succeq \mathcal{I}_{k_{2}}
    \label{eq:ordering_general}
\end{equation}
Theorem \ref{consistency} suggests that the entries $\{\sigma_k^*\}_{k=0}^{N-1}$ of an optimal solution $\sigma^*$ of \eqref{eq:abstract_general} can be \emph{ranked} as well. Provided a suitable comparison vector $\nu \in \mathbb{R}^N$, we advocate the usefulness of $\rho$, \emph{descending rank} function based on $\nu$ to act like a $\tau$ function above, i.e.\@ for any $k \in [N]$, we would have 
\begin{equation}
\rho(k) = \tau(k)
\label{eq:inphase}
\end{equation}
As a reminder from Definition \ref{descending_rank}, we write $\rho(k) = j$ if $\nu_k$ is the ($j+1$)-th largest element of $\nu$. It follows from Theorem \ref{consistency} that $\sigma_k^*$ must correspond to the ($j+1$)-th largest element of $\sigma^*$. 
As displayed in Theorem \ref{general_theorem}, Theorem \ref{consistency} applies verbatim to problem \eqref{eq:master_problem} by picking for every $k \in [N]$, $n_k(\sigma_k)=a_k\,\sigma_k$, $d_k(\sigma_k)=b_k h(\sigma_k)$ and $D=\mathcal{B}(\bar{\delta})$. One valid comparison vector that fulfills \eqref{eq:ordering_general} and \eqref{eq:inphase} would be $\nu_k = b_k / a_k$.
\vspace{2pt}

\hspace{-11pt}\edited{Let $\delta^*$ be an optimal solution for \eqref{eq:master_problem}. We summarize last paragraph's key content by underlining the fact that the biggest the value of $b_k/a_k$ will be, the biggest the optimal inexactness at iteration $k\in[N]$, according to our master problem, will be as well.}
$$ \rho(k_{1}) < \rho(k_{2}) \Rightarrow \nu_{k_{1}} = \frac{b_{k_{1}}}{a_{k_{1}}} \geq \frac{b_{k_{2}}}{a_{k_{2}}} = \nu_{k_{2}} \Rightarrow \delta^*_{k_{1}} \geq \delta^*_{k_{2}}$$

\renewcommand{\arraystretch}{1.5}
We are now ready to state \edited{a} general theorem about master problem \eqref{eq:master_problem}.
\newpage 
\begin{theorem}
\label{general_theorem} 
Let Assumptions A and B hold with $(a,b) \in \mathbb{R}^{N \times 2}_{++}$, $m < 1 < M$ and $h : [m \bar{\delta},M \bar{\delta}] \to \mathbb{R}_+$ being convex.
$\exists N_{\oplus},N_{\ominus} \in \{0,\dots,N-1\}$, $\lambda^* \in \mathbb{R}$ such that $\forall k \in \{0,\dots,N-1\}$, 
\begin{equation}
 \delta_{k}^* = \begin{cases} M\,\bar{\delta} & k \in \oplus := \{ \tilde{k} \,|\, \rho(\tilde{k}) < N_{\oplus}\} \\ (h^{'})^{(-1)}\Big(\frac{a_k\,\lambda^*}{b_k}\Big) & k \in \mathcal{T} := \{\tilde{k}\,|\,N_{\oplus}\leq \rho(\tilde{k}) \leq N-1-N_{\ominus}\} \\ m\,\bar{\delta} & k\in \ominus := \{  \tilde{k} \,|\, \rho( \tilde{k}) > N-1-N_{\ominus}\} \end{cases}
\label{eq:general_optimal_schedule}
\end{equation}
where $\rho(k)$ depicts the descending rank of $\nu_k = b_k / a_k$, $\lambda^*$ satisfies the equality
\begin{equation}
\sum_{k =0}^{N-1}\,b_k\,h(\bar{\delta}) - \Bigg[h(M\bar{\delta})\Bigg(\sum_{k \in \oplus}\,b_{k}\Bigg) + h(m\bar{\delta})\Bigg(\sum_{k\in \ominus}\,b_{k}\Bigg)\Bigg] = \sum_{k \in \mathcal{T}}\,b_{k}\,h\bigg((h^{'})^{(-1)}\Big(\frac{a_k\,\lambda^*}{b_k}\Big)\bigg)   
\label{eq:budget_general}
\end{equation}
and $\delta^*$ stands as a solution of \eqref{eq:master_problem}.
\end{theorem}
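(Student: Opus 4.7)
The plan is to recast \eqref{eq:master_problem} as a convex program, derive its KKT conditions, and then invoke Theorem \ref{consistency} to sort the resulting piecewise structure by the ratios $\nu_k = b_k/a_k$. First I would observe that the linear objective $\sum_k a_k\,\delta_k$ is strictly increasing in each $\delta_k$ whereas the constraint function $\sum_k b_k\, h(\delta_k)$ is strictly decreasing, so replacing the equality by $\sum_k b_k\,h(\delta_k) \leq \mathcal{B}(\bar{\delta})$ does not change the optimal value, the constraint is active at any optimum, and convexity of $h$ together with the box constraints yields a convex feasible set. With a linear objective, KKT becomes both necessary and sufficient, and Slater's condition is trivially met by the feasibility of $\bar{\delta}\,\mathbf{e}$, which lies strictly inside the relaxed region since $m<1<M$.

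Next I would write the Lagrangian with a single multiplier for the budget constraint and non-negative multipliers $\mu_k,\nu_k$ for the upper and lower box bounds. The stationarity condition reads $a_k + \lambda\, b_k\, h'(\delta^*_k) + \mu_k - \nu_k = 0$, and complementary slackness yields three mutually exclusive regimes. For interior iterates ($\mu_k=\nu_k=0$) we obtain $h'(\delta^*_k)$ proportional to $a_k/b_k$, which inverts (via $h'$ being invertible with range in $\mathbb{R}_-$) to the middle branch of \eqref{eq:general_optimal_schedule} after absorbing signs into the paper's $\lambda^*$. For iterates saturating $M\bar{\delta}$ (respectively $m\bar{\delta}$), the sign of $\mu_k$ (respectively $\nu_k$) translates directly into $b_k/a_k$ lying above (respectively below) a threshold of the form $\lambda^*/h'(M\bar{\delta})$ (respectively $\lambda^*/h'(m\bar{\delta})$). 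Strict monotonicity of $h'$ guarantees these thresholds are consistently ordered, so the three regimes partition the indices by their values of $\nu_k$.

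To identify this partition with the descending-rank structure stated in the theorem, I would apply Theorem \ref{consistency} with $n_k(\sigma) = a_k\,\sigma$ and $d_k(\sigma) = b_k\, h(\sigma)$. The corresponding ratios $\mathcal{I}_k(\sigma) = a_k/(b_k\, h'(\sigma))$ are strictly negative since $h'<0$, and strictly increasing in $\sigma$ since $h'$ is strictly increasing with negative range. The pointwise ordering $\mathcal{I}_{k_1} \succeq \mathcal{I}_{k_2}$ is equivalent to $b_{k_1}/a_{k_1} \geq b_{k_2}/a_{k_2}$, and Theorem \ref{consistency} then forces $\delta^*_{k_1} \geq \delta^*_{k_2}$. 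Consequently, sorting indices by the descending rank $\rho$ of $\nu_k = b_k/a_k$ produces a monotone optimal schedule in which a leading block of size $N_\oplus$ saturates at $M\bar{\delta}$, a trailing block of size $N_\ominus$ saturates at $m\bar{\delta}$, and the middle block assumes the interior KKT value prescribed by $\lambda^*$.

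The remaining step is to pin down $\lambda^*$ via \eqref{eq:budget_general}, which is just the active constraint $\sum_k b_k\, h(\delta^*_k) = \mathcal{B}(\bar{\delta})$ rewritten by plugging in the three branches. I expect this coupling between $(N_\oplus,N_\ominus)$ and $\lambda^*$ to be the principal subtlety: the partition depends on $\lambda^*$ through the two thresholds, while $\lambda^*$ is pinned down by the budget given the partition. I would resolve this by treating the right-hand side of \eqref{eq:budget_general} as a continuous, monotone function of $\lambda^*$ (monotonicity of the interior contribution follows from strict monotonicity of $(h')^{-1}$ and of $h$) and invoking the intermediate value theorem together with the feasibility of $\bar{\delta}\,\mathbf{e}$. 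The corner cases $N_\oplus = N$ or $N_\ominus = N$ are excluded because $\sum_k b_k\, h(M\bar{\delta}) < \mathcal{B}(\bar{\delta}) < \sum_k b_k\, h(m\bar{\delta})$, so indeed $N_\oplus, N_\ominus \in \{0,\dots,N-1\}$ as stated.
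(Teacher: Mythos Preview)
Your proposal is correct and mirrors the paper's own proof: both relax the equality budget constraint to an inequality so as to obtain a convex program whose optimum tightens the budget, invoke KKT (the paper argues LICQ on the original problem rather than Slater on the relaxation, but the effect is the same), read off the interior formula $\delta_k^* = (h')^{-1}(a_k\lambda^*/b_k)$ from stationarity, and apply Theorem~\ref{consistency} with $n_k(\sigma)=a_k\sigma$, $d_k(\sigma)=b_k h(\sigma)$ to obtain the rank-ordered three-block structure. The only minor imprecision is that $\bar{\delta}\,\mathbf{e}$ satisfies the relaxed budget with equality rather than strictly, so to witness Slater you should perturb slightly toward $M\bar{\delta}$.
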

\begin{proof}
The proof of Theorem \ref{general_theorem} is given in Appendix \ref{proof_theorem2}.
\end{proof}

\remark If all the entries of $\nu \in \mathbb{R}^N_{++}$ differ, their ordering is unique, so becomes $\delta^*$ as shown in Appendix \ref{uniqueness_explanation}. 
\remark \label{relativity} One only need to know $a_k$ (\edited{respectively} $b_k$) up to a common factor $K_a > 0$ (\edited{respectively} $K_b > 0$). That is, it is sufficient to know $\tilde{a}_k$ (\edited{respectively} $\tilde{b}_k$) such that for any $k \in [N]$, $a_k = K_a \,\tilde{a}_k$ (\edited{respectively} $b_k = K_b\,\tilde{b}_k$). In Theorem \ref{general_theorem}, instead of looking for $\lambda^*$, one then searches for another constant $\tilde{\lambda}^* = \lambda^*\,K_a / K_b$ that would act like $\tilde{\lambda}^* \,\tilde{a}_k / \tilde{b}_k = \lambda^*\,a_k / b_k$.

\example
\label{toy} (\emph{toy example}) We want here to give a first glimpse about the upcoming optimal schedules that will apply for (among others) situations reflected in Example \ref{iAFB}, \ref{example_detailed} and \ref{example_detailed_2}. To this purpose, we clarify the above notations by writing explicitly what they entail given an academic toy example rightfully meeting Assumptions A and B. We consider that the \emph{impact coefficients} and \emph{relative costs} of oracles are given for any $k \in \{0,\dots,N-1=79\}$ by 
    $$a_k = (k+1) \hspace{20pt} b_k = \begin{cases} \frac{3}{420} & 0 \leq k < 20 \\ \frac{2}{420} & 20 \leq k < 40 \\ \frac{8}{420} & 40 \leq k < 80 \end{cases}$$
The \emph{reference} inexactness parameter is chosen as $\bar{\delta} = 10^{-4}$ and $m=0 < 1 < M = \edited{2}$. \\The oracle cost $h:[0,1] \to [0,\infty)$ is convex, differentiable and fluctuates poly-logarithmically with $0\le \delta \leq 1$ as $ \delta \to h(\delta) = \log^2(\delta^{-1})$. $h'$ is negative and strictly increasing from $[0,1]$ to $(-\infty,0]$, its inverse  $(h')^{(-1)}: (-\infty,0] \to [0,1]$ exists for $-\omega < 0$ $$-\omega \to (h')^{(-1)}(-\omega) = 2\,\omega^{-1}\,\mathcal{W}_0(\omega / 2) $$ 
where $\mathcal{W}_0$ depicts the Lambert $\mathcal{W}$ function on its $0$-branch. Here, Theorem \ref{general_theorem} applies. Obviously, $\lim_{\delta \to 0}\,h(\delta) = \infty$ translates to $\ominus = \emptyset \Leftrightarrow N_\ominus = 0$. From that point, we can efficiently solve KKT conditions. They inform that for our present problem instance, $N_\oplus = 10$. Fortunately, the indices $k \in [80]$ for which $\nu_k$ values are the $N_\oplus$ biggest fall in $\oplus = \{0,\dots,9\}$. We can conclude that the set $\mathcal{T}$ contains the indices $\{10,\dots,79\}$.\\We summarize the calculated optimal schedules for \eqref{eq:master_problem} in \eqref{eq:general_optimal_schedule_effective} bearing in mind that $\lambda = -\lambda^* \simeq \edited{27.5757}$. Figure \ref{fig:beautiful} graphs the optimal inexactness schedule in \eqref{eq:general_optimal_schedule_effective}. 
\begin{equation}
 \delta_{k}^* = \begin{cases} 2\cdot10^{-4} & k \in \{0,\dots,9\} \\ \big(\frac{2\,(3/420)}{(k+1)\,\lambda}\big)\mathcal{W}_{0}\big(\frac{(k+1)\,\lambda}{2\,(3/420)}\big) & k \in \{10,\dots,19\}\\
 \big(\frac{2\,(2/420)}{(k+1)\,\lambda}\big)\mathcal{W}_{0}\big(\frac{(k+1)\,\lambda}{2\,(2/420)}\big) & k \in \{20,\dots,39\}\\
  \big(\frac{2\,(8/420)}{(k+1)\,\lambda}\big)\mathcal{W}_{0}\big(\frac{(k+1)\,\lambda}{2\,(8/420)}\big) & k \in \{40,\dots,79\}
 \end{cases}
\label{eq:general_optimal_schedule_effective}
\end{equation}
\vspace{-10pt}
\begin{figure}
\centering
\includegraphics[width=0.8\textwidth]{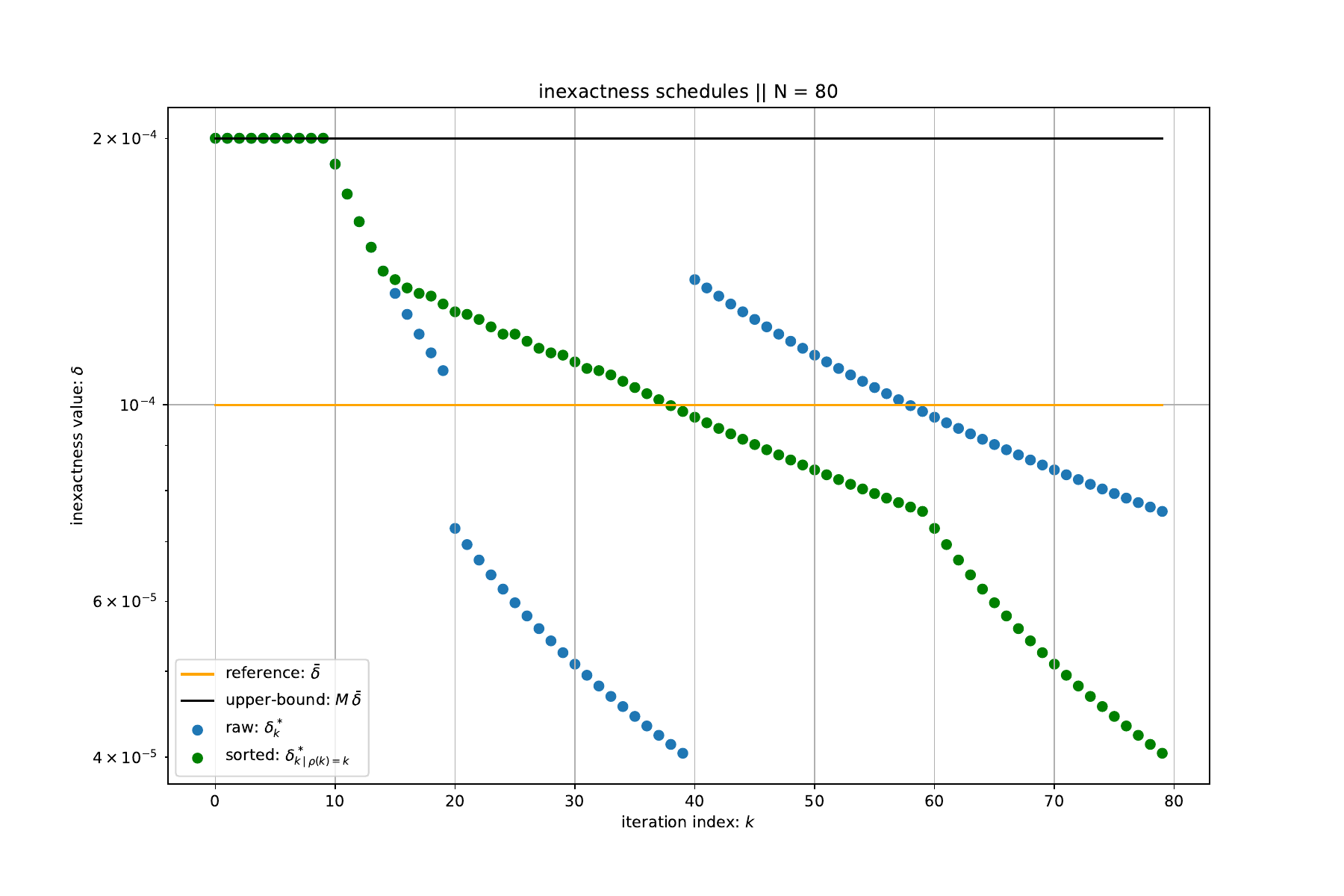}
\caption{inexactness schedules: \emph{reference}, raw optimal $\{\delta_k^*\}_{k=0}^{N-1}$ and sorted optimal $\{\delta_{\tilde{k}\,|\,\rho(\tilde{k}) = k}^*\}_{k=0}^{N-1}$.}
\label{fig:beautiful}
\end{figure}

\subsubsection{Closed-form solutions} In this section we focus on analytical closed-form solutions we can obtain from the previous theorems when specifying a certain type of $h$ function, related to the practical examples motivating our oracle cost model \eqref{eq:cost_model}. Yet, it usually remains to seize the correct values of $N_{\oplus}$ (number of iterations either performed at worst oracle accuracy or involving the least computational efforts) and $N_{\ominus}$ (number of iterations either achieved with the best oracle accuracy or demanding the heaviest computational efforts). 

\vspace{10pt}
To circumvent a cautious search for the right pair $(N_{\oplus},N_{\ominus}) \in \{0,\dots,N-1\}^2$, one can immediately detect whether $N_{\oplus} = 0 = N_{\ominus}$ with a simple trial in constant time. It is essentially what we achieve in Corollaries \ref{corollary_h_r_simp} and \ref{corollary_h_r_simp_work}.  
In such circumstances, $\oplus = \emptyset = \ominus$ (Theorem \ref{general_theorem}), i.e. the \emph{transient set} $\mathcal{T}$ that normally collects the $k$-indices of iterations associated with $\nu_k$ values smaller than the $N_{\oplus}$ biggest and bigger than the $N_{\ominus}$ smallest involves here all the iterations, i.e. $\mathcal{T} = \{0,\dots,N-1\}$.\vspace{-5pt}
\newpage 
\paragraph*{Functional family} Let us formally declare a meaningful functional family of $h$ functions that captures our applications of interest.
For any $r > 0$, we define the convex function $h_{r} : \mathbb{R}_+ \to \mathbb{R}_{++}$ and the inverse of its derivative $(h'_{r})^{(-1)} : \mathbb{R}_{--} \to \mathbb{R}_+$
\begin{equation}
    h_{r}(\delta) =
    \delta^{-r}  \hspace{5pt} \Rightarrow \hspace{5pt} (h_{r}')^{(-1)}\big(-\omega\big) = 
    \Big(\frac{\omega}{r}\Big)^{-\frac{1}{r+1}}
    \label{eq:family_broad}
\end{equation} 
\vspace{-10pt}
\corollary \label{corollary_h_r_simp} 
Let Assumptions A and B hold with $(a,b) \in \mathbb{R}^{N \times 2}_{++}$, $m < 1 < M$ and $h=h_{r} : [m\,\bar{\delta},M\,\bar{\delta}] \to \mathbb{R}_+$. If $m\,\bar{\delta} \preceq \mathring{\delta} \preceq M\,\bar{\delta}$ with  
\begin{equation}
\mathring{\delta} = \bar{\delta}\cdot \Bigg(\frac{\sum_{k=0}^{N-1}\,(b_k\,a_k^r)^\frac{1}{(r+1)}}{\sum_{k=0}^{N-1}\,b_k}\Bigg)^{\frac{1}{r}} \cdot \big(b \odot a^{-1} \big)^{\frac{1}{r+1}}
    \label{eq:gracious}
\end{equation}
then $\mathring{\delta}$ is optimal for \eqref{eq:master_problem}. 

\remark\label{simple_feasibility}  Corollary \ref{corollary_h_r_simp} simply tells that if $\mathring{\delta}$ from \eqref{eq:gracious} is feasible for our master problem \eqref{eq:master_problem} under the choice $h = h_{r}$ then it must be optimal. For the sake of completeness, we also provide in Appendix \ref{full_delta} closed-form schedules in what concerns an extended family of $h_r$ functions. It includes the logarithmic model $h_0(\delta) = \log(\delta^{-1})$ from our introductory Example \ref{introductory_example} as a pathological case $r \to 0$ and $N_\oplus, N_\ominus$ are not necessarily zero. As a drawback consequence, the employed notations become heavier. 

\paragraph*{Interpretation} Rather intuitively, an iteration whose \emph{impact coefficient} $a_k$ is bigger harms more the upper-bound $\mathcal{E}(\{\delta_k\}_{k=0}^{N-1})$ on the objective gauge $\mathcal{C}(N)$ (cfr. \eqref{eq:convergence_model}) thus requires more precision, i.e. a small $\delta_k$. However its associated cost model $\mathcal{B}_k(\cdot) = b_k\,h(\cdot)$ (cfr. \eqref{eq:cost_model}) eventually reweights the accuracy of the oracle through $b_k$ according to its relative computational burden with respect to the other iterations. Hence, $\delta_k^*$ are governed by \emph{compound impacts} $\nu_k = b_k / a_k$, 
\begin{equation}
\delta_k^* \propto \bigg(\frac{b_k}{a_k}\bigg)^{\frac{1}{r+1}}
\label{eq:delta_optimal_simplified}
\end{equation}
At fixed $a,b$, as $r$ approaches $0$, the oracles appear cheap and one can gain a lot by experiencing large deviations from $\bar{\delta}$.
Conversely, we observe that if $r$ tends to infinity then the cost of oracles spikes so that the constant schedule at $\bar{\delta}$ inexactness level, feasible, becomes optimal. Indeed, at any iteration, requesting an oracle accuracy even slightly better than $\bar{\delta}$ turns out to be extremely expensive and not affordable. Thus, we must observe $\mathcal{T} = [N]$ if $r$ is large enough. Another situation in which one can easily predict that $\mathcal{T} = [N]$ or, equivalently, $N_\ominus = 0 = N_\oplus$, arises when $m = 0$ and $M = \infty$. Indeed, akin to Example \ref{toy}, the oracle cost blows up for $\delta \to m \bar{\delta} = 0$.\\ In addition, if not obliged to, one has no advantage to choose an inexactness parameter arbitrarily large $\delta \to M \bar{\delta} = \infty$.

\remark \label{illustration_example_bis_contd} Sticking to our main thread, \edited{let us express a possible asymptotical trend of optimal schedules of inexactness for Example \ref{iAFB} ($\mu=0$, constant $\lambda_k$)} based on \eqref{eq:delta_optimal_simplified}
$$ \delta_k^* \propto A_{k+1}^{-\frac{1}{2}} \in \mathcal{O}(k^{-1})$$
\newpage 

\paragraph*{Illustration}
We illustrate \edited{now} the application of our previous theorems on an instance closely related to Example \ref{iAFB}. Indeed, when $\mu=0$ and $\lambda_k \in \mathcal{O}(1)$ for any integer $k \geq 0$, it is known that $a_k \in \Theta(k^2)$, see e.g.\@ \cite{Stonyakin20}.  We display the evolution of quantities $N_\oplus$ (Figure \ref{fig:noplus}) and $\delta_k^*$ (Figure \ref{fig:deltas}) with oracle's complexity parameter $r$ and the factor of maximal tolerated inaccuracy $M$. Let $m=0$ and \edited{let the \emph{reference} inexactness tolerated be} $\bar{\delta}=10^{-4}$. In what follows, $N_\ominus=0$ in any situation since the oracle cost model $h_r(\delta) = \delta^{-r}$ explodes as $\delta \to 0 = m\bar{\delta}$. Accordingly, we assume for any $k\geq0$ that 
$b_k = 1$ and $ a_k = (k+1)^2$. We highlight two observations of interest.
\vspace{25pt}
\begin{enumerate}
\item[$\blacksquare$]  Figure \ref{fig:noplus}: $N_\oplus>0$ if $N$ itself is big enough and $M\bar{\delta}$ constrains our master problem \eqref{eq:master_problem}, i.e.\@ $M\to 1$. In this latter case, the computational savings on intend to invest in late iterations requiring more care must be spread out on more early iterations since the biggest $\delta_k$ from any optimal schedule $\{\delta^*_k\}_{k=0}^{N-1}$ cannot take a value that falls way above the reference $\bar{\delta}$. In other words, $M\bar{\delta}$ does not allow one to save a lot of efforts in the iterations linked with the smallest \emph{impact factors}.
\vspace{25pt}

\item[$\blacksquare$] Figure \ref{fig:deltas}: The variability in the optimal schedules for inexactness $\{\delta_k^*\}_{k=0}^{N-1}$ heavily depends on the oracle cost parameter $r$, as emphasized by the power $(r+1)^{-1}$ in the relationship \eqref{eq:delta_optimal_simplified}. Our findings discussed in the previous paragraph are graphically confirmed, e.g. when $r=50$, $\delta_k^* \simeq \bar{\delta}$ for every $k\in\{0,\dots,N-1\}$.
\end{enumerate}
\begin{figure}[h]
\centering
\includegraphics[width=0.85\textwidth]{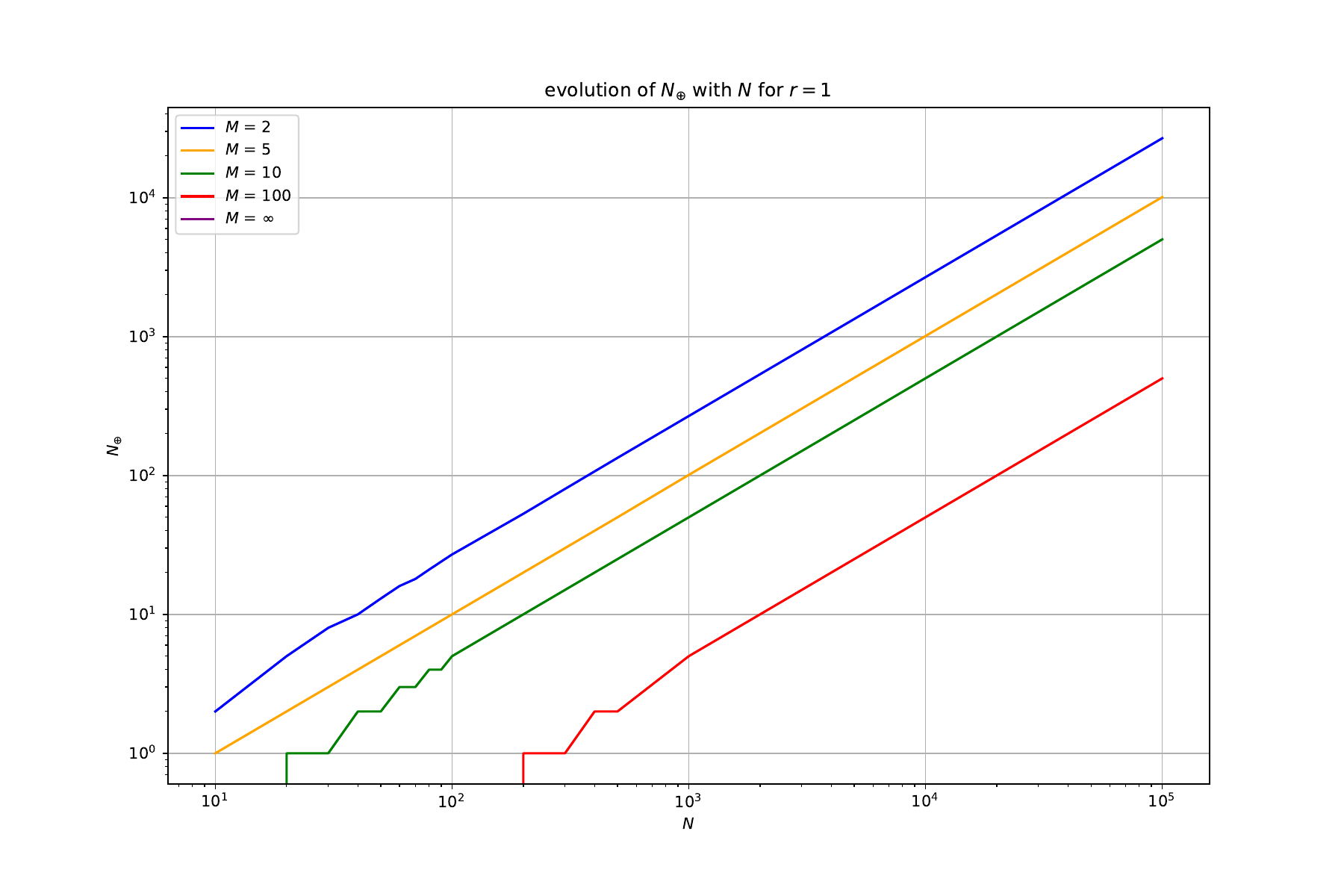}
\caption{$N_\oplus$ grows with \emph{selected} $N$, this effect starts earlier when \eqref{eq:master_problem} is more constrained, i.e.\@ $M \to 1$.}
\label{fig:noplus}
\end{figure}
\vspace{-20pt}
\begin{figure}
\centering
\includegraphics[width=0.85\textwidth]{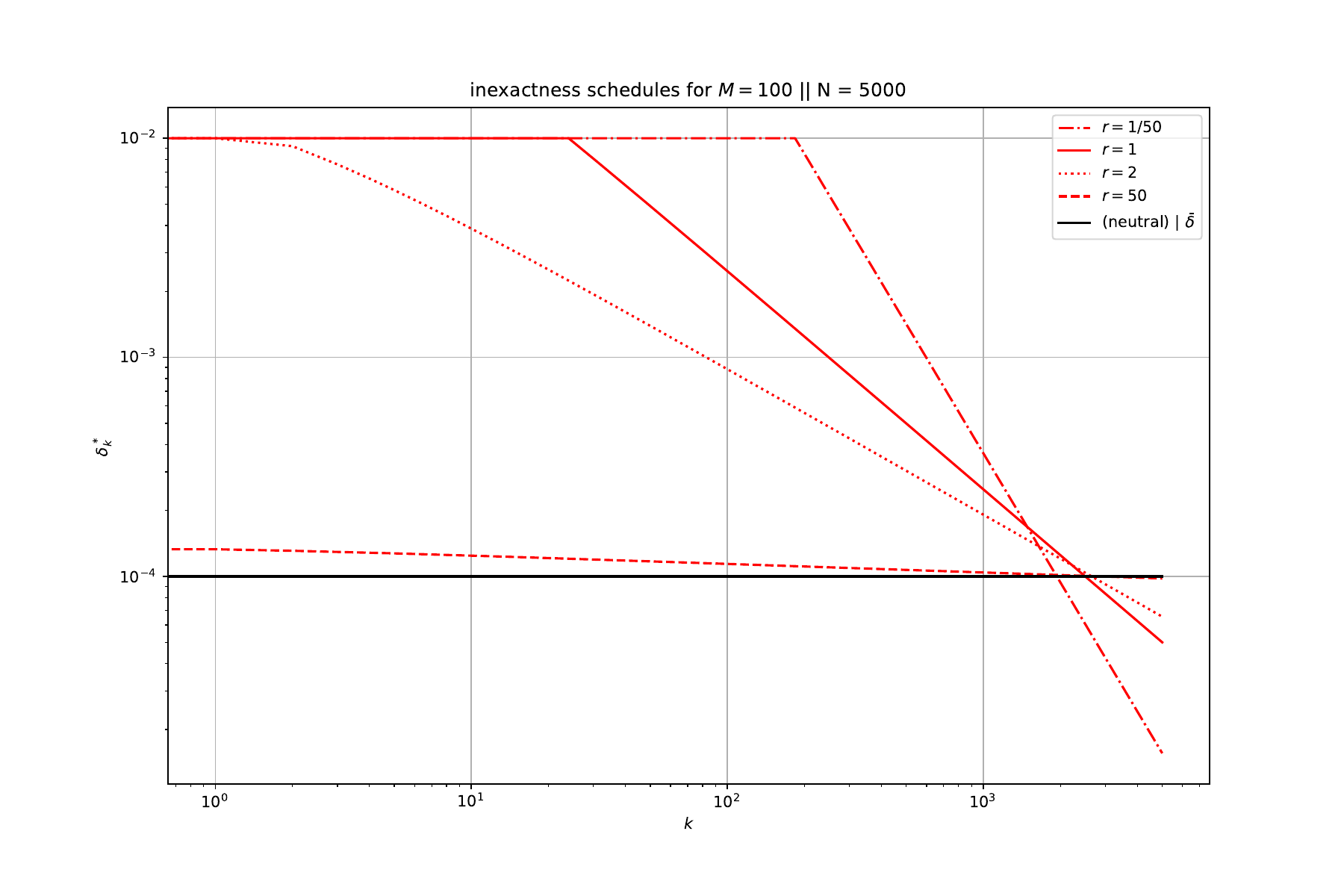}
\caption{$N$ \emph{fixed}, $r \to 0$ means cheaper oracles and thus more aggressive schedules (see $r = 50^{-1}$).}
\label{fig:deltas}
\end{figure}
\newpage

\subsection{Practical extensions}
This part of our work is dedicated to direct extensions of the results unveiled so far.\\
We adapt them to practical scenarios beyond the initial scope of TOM. Firstly, we investigate the modifications one should undertake to apply the concept of tunable oracle when, instead of the oracles' accuracies one would like to monitor the computational work invested in producing their outputs. Secondly and as previously \edited{hinted}, we propose an \emph{online} strategy which preserves the structure of optimal inexactness schedules without the knowledge of $N$.
\subsubsection{Work controlled} In some cases, one would like to manually specify the time spent at each iteration. 
In other words, instead of deciding $\delta_k$ that would \emph{a priori} lead to a cost of $\mathcal{B}_k(\delta_k)$, we process the other way around. We choose the amount of computations $\omega_k \in [\omega_M,\omega_m]$ and we expect to incur a level of inexactness $\delta_k \sim \mathcal{B}_k^{(-1)}(\omega_k)$. Therefore, we can equivalently fix a \emph{reference} total work $\bar{\omega}$ that acts as a surrogate for the budget term $\sum_{k=0}^{N-1}\,\mathcal{B}_k(\bar{\delta})$. 
We rewrite the objective from master problem \eqref{eq:master_problem} as $$\sum_{k = 0}^{N-1}\,a_k\,\delta_k \sim \sum_{k = 0}^{N-1}\,a_k\,\mathcal{B}^{(-1)}_{k}(\omega_k) = \sum_{k=0}^{N-1}\,a_k\,h_{r}^{(-1)}\big(\omega_k / b_k\big)$$

\hspace{-12pt}Notice the homogeneity of $h_r$ allows to write for any $\eta > 0$, $\beta\geq 0$, $r>0$ and $\Omega \subseteq \mathbb{R}_+^{N}$, 
\begin{equation}
\arg \min_{\omega \in \Omega}  \sum_{k=0}^{N-1}\,h_{r}^{(-1)}\big(\omega_k\cdot \eta / b_k\big) = \arg \min_{\omega \in \Omega} \sum_{k=0}^{N-1}\,h_{r}^{(-1)}\big(\omega_k / b_k\big)
\label{eq:equivalency}
\end{equation}
\newpage
This ensures that $b_k$  must only be known up to a common constant multiplicative factor, as previously assumed. We formulate the work controlled counterpart of \eqref{eq:master_problem} in \eqref{eq:master_problem_manual}, assuming that $h = h_{r}$
\begin{equation}
\edited{\omega^*(N,\bar{\omega},\omega_m,\omega_M)} \in \arg \min_{\omega_{M}
\, \preceq \, \omega \,\preceq\, \omega_{m}}
\sum_{k=0}^{N-1}\,a_k\,h^{(-1)}\Big(\frac{w_k}{b_k}\Big) \hspace{5pt}\text{s.t.}\hspace{5pt} \sum_{k=0}^{N-1}\,\omega_k = \bar{\omega}
\label{eq:master_problem_manual}
\end{equation}

\hspace{-12pt}Let us state the analogous version of Corollary \ref{corollary_h_r_simp} regarding the work controlled framework. Again, the interested reader can look at Appendix \ref{full_delta} that displays a full version with $N_{\oplus}$ and $N_\ominus$ not necessarily zero.

\corollary
\label{corollary_h_r_simp_work} 
Let Assumptions A and B hold with $(a,b) \in \mathbb{R}^{N \times 2}_{++}$, $\omega_M < \bar{\omega} / N < \omega_m$ and $h=h_{r} : \mathbb{R}_+ \to \mathbb{R}_+$. If $\omega_M \preceq \mathring{\omega} \preceq \omega_m$ with  
\begin{equation}
\mathring{\omega} = \bar{\omega}\cdot \frac{\big(b \odot a^{r} \big)^{\frac{1}{r+1}}}{\sum_{k=0}^{N-1}\,(b_k\,a_k^r)^{\frac{1}{(r+1)}}}
    \label{eq:gracious_work}
\end{equation}
then $\mathring{\omega}$ is optimal for \eqref{eq:master_problem_manual}.

\remark Despite being very similar, problems  \eqref{eq:master_problem} and \eqref{eq:master_problem_manual} are not perfectly equivalent in general. They do share the common goal of minimizing $\mathcal{E}$ subject to a budget constraint $\mathcal{B}$. Nevertheless, one can see \eqref{eq:master_problem_manual} as a version of \eqref{eq:master_problem} where the bounds on the achievable $\delta_k$ at iteration $k \in \{0,\dots,N-1\}$ vary. 
Indeed, we have the following bounds:
\begin{equation}
    \delta_k \in \Big[\mathcal{B}_k^{(-1)}(\omega_m),\,\mathcal{B}_k^{(-1)}(\omega_M)\Big]
    \label{eq:reached_accuracy}
\end{equation}
However, in the neutral case where $b \propto \mathbf{e}$, these bounds are constant and can be written as $[m\bar{\delta},\,M\bar{\delta}]$ for any $\bar{\delta}>0$ \edited{and well-chosen $0<m\leq M$ parameters.}
\paragraph*{Interpretation} 
Just as in \eqref{eq:delta_optimal_simplified}, it is not difficult to construe results from Corollary \ref{corollary_h_r_simp_work} by advocating that 
\begin{equation}
\omega_k^* \propto \bigg(b_k\,a_k^{r}\bigg)^{\frac{1}{r+1}}
\label{eq:work_optimal_simplified}
\end{equation}
We have already seen that when $a_k$ grows, linked oracle's accuracy $\delta_k$ should evolve inversely proportional. As a consequence, the computational cost $\omega_k$ rise accordingly. One should pay attention to the role of $b_k$ in \eqref{eq:work_optimal_simplified}. It seems like a bigger $b_k$ implies a bigger $\omega_k$, which turns out to be true. Meanwhile, a bigger $b_k$ is prone to curb the demand for highly accurate oracles in \eqref{eq:delta_optimal_simplified} hence intuitively reducing oracle's work $\omega_k$. Yet, there is no contradiction. 
Combining \eqref{eq:delta_optimal_simplified} and Assumption B, we can capture the overall marginal effect of $b_k$ in 
$$ \omega_k = b_k h(\delta_k) \propto b_k\, \big((b_k / a_k)^{\frac{1}{r+1}}\big)^{-r} = b_k^{\frac{1}{r+1}} a_k^{\frac{r}{r+1}}$$
\newpage 
Finally, let us assert that $b_k$'s \emph{weighting impact} diminishes as $r \to \infty$, $\omega_k^*$ becoming asymptotically proportional to the \emph{impact coefficient} $a_k$. Conversely when $r \to 0$, the suggested oracle cost somehow flattens. It ultimately depends only on $b_k$, constant in the absence of further knowledge (see Example \ref{iAFB} and \ref{example_detailed_2}). 

\subsubsection{Online version}
\label{online_session}

So far, we have assumed that the choice of $N$ was exogenous and well thought. Unfortunately, there is no \emph{one fits all} approach to adequately fix $N$. Usually, one runs an optimization algorithm and stops it as soon as a relative tolerance, tracked alongside the iterations, is observed. \edited{Let $\delta^*$ (respectively $\omega^*$) be an optimal solution of \eqref{eq:master_problem} (respectively \eqref{eq:master_problem_manual} for a specified $N \in \mathbb{N}$ and $h = h_r$.} Hidden behind equation \eqref{eq:gracious} \edited{(respectively \eqref{eq:gracious_work})} and emphasized by \eqref{eq:delta_optimal_simplified} \edited{(respectively \eqref{eq:work_optimal_simplified})}, one can retrieve a recursion rule linking the optimal inexactness parameters of two distinctive iterations, say $\hat{k}$ and $k$. Let $\delta_k^* \in [m\bar{\delta},M\bar{\delta}]$ then for any $\hat{k} \in [N]$, one can recover 
\begin{equation}
\delta_{\hat{k}}^* = \max\bigg\{m\bar{\delta},\min\bigg\{M\bar{\delta},\bigg(\frac{b_{\hat{k}}}{a_{\hat{k}}}\cdot\frac{a_k}{b_k}\bigg)^{\frac{1}{(r+1)}}\delta_k^*\bigg\}\bigg\}
\label{eq:delta_optimal_simplified_contd}
\end{equation}
Following the same logic, a recursion exists for $\omega_k^* \in [\omega_M,\omega_m]$,
\begin{equation}
\omega_{\hat{k}}^* = \max\bigg\{\omega_M,\min\bigg\{\omega_m,\bigg(\frac{b_{\hat{k}}\,a_{\hat{k}}^r}{b_{k}\,a_{k}^r}\bigg)^{\frac{1}{(r+1)}}\omega_k^*\bigg\}\bigg\}
\label{eq:work_optimal_simplified_contd}
\end{equation}

Therefore, in practice, one can adopt a \emph{reference} situation accounting for a lower-bound of $N_r$ iterations then compute its inherent \emph{offline} optimal schedule. For any iteration $\hat{k} \geq N_r$, one can extrapolate using the recursion rules explained right above.

\paragraph*{Rationale} The extrapolated schedules obtained with \eqref{eq:delta_optimal_simplified_contd}, \eqref{eq:work_optimal_simplified_contd} present the advantage to preserve the right ratios $\delta_{\hat{k}}^* / \delta_k^*$ and $\omega_{\hat{k}}^* / \omega_k^*$ as if we knew the number of iterations performed by our Tunable Oracles Method (TOM) when exited. In addition, it does not break our practical assumption of relative knowledge of $a_k$'s and $b_k$'s since we only involve ratios wiping out any common multiplicative constant. Finally, such \emph{online}  schedule allows for $b_k / a_k$'s that are not necessarily pre-computed. As mentioned earlier, in various adaptive methods such coefficients are defined by the final result of local line-search techniques \cite{BTB22,Stonyakin20,PD18}. This feature of  our \emph{online} strategy dramatically extends the applicability of our approach, endowing it with local information exploitation, allowing for possible \emph{non-monotonicity} in the parameters $\delta_k$ or $\omega_k$ used.

\section{Numerical Experiments}
\label{sec:numericals}
\edited{
We present three experiments that showcase both our elaborated \emph{offline} and \emph{online} techniques for Tunable Oracle Methods (TOM). We emphasize that we are primarily concerned with showing that our approach effectively improves the efficiency of known existing methods such as FGM \cite{Stonyakin20}.

The first experiment serves to validate our theoretical optimal \emph{offline} schedules for various levels of parameters, i.e.\@ $r$ (oracle cost parameter), $\bar{\delta}$ (\emph{reference} inexactness) and $N$ (number of performed iterations). 
In order to stick to our theoretical framework as much as possible, we generate inexact oracle outputs by adding artificial noise to the gradients used in FGM. 
The noise is chosen according to a simulated oracle cost one would invest to control the level of inexactness.
Confirming that our optimal schedules also perform better in a practical setting is important, as numerical optimization methods (with or without inexactness) typically perform better than their worst-case guarantees in practice. 
 Within this setting, we show the superiority of our schedules compared to a \emph{constant} schedule approach with matching overall computational cost.

In the second and third experiments, we investigate how the \emph{offline} and our heuristic \emph{online} approach behave within a real case where inexactness naturally emerges. Experiments 1,2 and 3 rely on the problem motivated in Example \ref{example_detailed_2} but Experiment 3 allows for line-search within FGM hence turning it into an \emph{online} method. \\ \\Our code is freely available on {\href{https://github.com/guiguiom/TunableOracleMethods}{GitHub}} so that one can observe that the reported results are representative of the usual performances.

\subsection*{Test Problem \& Data Generation} 
We describe a \emph{robust optimization} task involving a regularization parameter $\mu \geq0$ and data vectors $\Theta = \{\theta_i\}_{i=1}^{n}$,
$$ \hspace{50pt} \theta_i \sim \mathcal{N}\big(\mathbf{0},I_d / p\big)\hspace{30pt} \forall i \in \{1,\dots,n\}$$
a collection of $n \in \mathbb{N}$ \emph{scenarios} from $\mathbb{R}^d$ ($p>0$). 
We consider the classical problem where one minimizes the worst outcome of the regularized linear objective only over the previously seen \emph{scenarios}, i.e. one wishes to minimize a regularized objective $\frac{\mu}{2}||x||^2+\langle \theta, x\rangle$ over the unit simplex, i.e. $\Delta := \{x \in \mathbb{R}^d\,|\,x\succeq \mathbf{0},\,||x||_1 = 1\}$ for a meaningful cost $\theta$ based on historical data about its value, stored in $O = [\theta_1,\dots,\theta_n]^T$.
\begin{equation}
\min_{x \in \Delta}\,\max_{\theta \in \Theta}\,\langle \theta, x\rangle + \frac{\mu}{2}||x||^2 = \min_{x \in \Delta}\,\max_{\theta \in \textbf{conv}(\Theta)}\,\langle \theta, x\rangle + \frac{\mu}{2}||x||^2
\label{eq:synthetic_true}
\end{equation}

\subsection{Experiment 1 | Softmax Optimization under Synthetic Noise}
In a large scale setting, solving \eqref{eq:synthetic_true} using the standard epigraph reformulation might be prohibitive and is sometimes replaced by a smoothed version of the robust objective, leaving the feasible set intact, i.e. no extra constraint. Let $\upsilon>0$, the problem becomes\footnote{Note that the value of \eqref{eq:synthetic} differs at most by $\epsilon = \mu + n/\upsilon$ from \eqref{eq:synthetic_true}.}
\begin{equation}
F^* = \min_{x \in \mathbb{R}^d}\,\overbrace{\underbrace{\upsilon^{-1}\,\log\Big(n^{-1}\,\sum_{i=1}^{n}\,\text{exp}(\upsilon\cdot\langle \theta_i, x\rangle)\Big) + \frac{\mu}{2}||x||^2}_{f(x)} + \underbrace{\chi_{\Delta}(x)}_{\Psi(x)}}^{F(x)} 
\label{eq:synthetic}
\end{equation}
Solving \eqref{eq:synthetic} using FGM requires the ability to compute (in-)exact information about $f$ at any query point $x \in X:= \Delta$, i.e. $(\tilde{f}(x),\nabla \tilde{f}(x)) \simeq (f(x),\nabla f(x))$. Here, the value of $f(x)$ and its gradient $\nabla f(x)$ are available in closed-form for \eqref{eq:synthetic} hence discarding the need for schedules of inexactness. However, one could think about a game where one has to pay $\delta^{-r}$ ($r>0$) or $-\log(\delta)$ ($r=0$) to obtain $\nabla f(x)$ corrupted by a noise of limited radius $\alpha \,\delta$ depending on parameter $r\geq0$. I.e.\@ one gets $\nabla \tilde{f}(x)$ such that 
\begin{equation}
\nabla \tilde{f}(x) = \nabla f(x) + \alpha\,\delta \cdot u
\label{eq:process}
\end{equation}
with $u \sim \mathcal{U}\big(\{u \in \mathbb{R}^d\,|\,||u||=1\}\big)$. From \cite{Devolder2013FirstorderMW}, one can deduce that $(f(x), \nabla \tilde{f}(x))$ as defined above provides a $(4\,\alpha\,\delta, \upsilon \cdot ||O||^2+\mu,\mu)$ inexact information. 
\paragraph*{Test instances} As suggested by Example \ref{example_detailed_2}, let us now try out FGM on problem \eqref{eq:synthetic} with fixed stepsize $L^{-1} = (\upsilon \cdot ||O||^2+\mu)^{-1}$ and different values of $\mu$, $r$, $N$, $\bar{\delta}$ while using sequences of inexact information tuples parametrized by schedules $\{\delta_k\}_{k=0}^{N-1}$. That is, for each instance $(\mu,r,N,\bar{\delta}) \in \{0,10^{-1}\} \times \{1\} \times \{10,500,1000,5000,10000\} \times \{10^{-1},10^{-3},10^{-5}\}$ or $(\mu,r,N,\bar{\delta}) \in \{0,10^{-1}\} \times \{0\} \times \{10,500,1000,5000,10000\} \times \{9\cdot10^{-3},10^{-4},10^{-6}\}$, we compare our \emph{offline} optimized approach $\{\delta_k = \delta_k^*\}_{k=0}^{N-1}$ where $\delta^*$ solves \eqref{eq:master_problem} with vectors $(a,b)$ specified in Table \ref{tab:my_tabular_AB}, $m=0$ and $M=100$ (we allow the inexactness from our \emph{tunable} approach to be a hundred times worse than the \emph{reference} $\bar{\delta}$) with the \emph{constant} schedule $\{\delta_k = \bar{\delta}\}_{k=0}^{N-1}$. We define the simulated cost of inexact oracles as $N / \bar{\delta} = \sum_{k=0}^{N-1}\,(\delta_k^*)^{-1}$ ($r=1$) and $-N \log(\bar{\delta}) = -\sum_{k=0}^{N-1}\,\log(\delta_k^*)$ ($r=0$)\footnote{$M\,\bar{\delta} < 1$ in order to ensure that any iteration costs a strictly positive amount of work.} and match it to the total cost of the constant $\bar{\delta}$ \emph{reference} schedule. Finally, in order to tame the effects of the randomness inherent to our synthetic inexact information production, we aggregate the obtained results from FGM over $5$ runs with $5$ different starting points drawn as $x_0 \sim \mathcal{U}\big(\Delta\big)$.  Here below, the dimensions read $(d,n,\alpha,p) =(100,500,100,10)$.

\paragraph*{Estimation of $F^*$}
Regarding this simple example, $F^*$ is estimated by the value of $F$ at the output from a noise-free FGM applied for $3\cdot10^5$ iterations.

\paragraph*{Results}
Figure \ref{fig:synthetic_no_reg} and \ref{fig:synthetic_reg} show the evolution of the terminal primal optimality gap $F(x_N)-F^* = f(x_N)-F^*$ after $N$ iterations of FGM (output iterate $x_N$ is in $\Delta$) using the variants of inexactness schedules described above. Each dot plotted on these graphs represents the averaged performance over the $25 = 5 \cdot 5$ independent runs of either our \emph{tunable} or the \emph{constant} approach for a single instance $(\mu,r,N,\bar{\delta})$.\\
As predicted by the theory, the \emph{tunable} inexactness schedule reached a better terminal primal optimality gap for every setting tried. Not surprisingly, the gains were dramatically more impressive when the discrepancy within \emph{impact coefficients} $\{a_k\}_{k=0}^{N-1}$ was high. Intuitively, when such coefficients do not vary at all, our optimized \emph{tunable} approach boils down to the \emph{constant} schedule and no gain is to expect at all. Table \ref{tab:my_tabular_AB} recalls that $a_k = A_{k+1}$ for any $k \in [N]$ and Remark \ref{illustration_example} suggests that $\{A_{k}\}_{k \in \mathbb{N}}$ grows sublinearly (respectively linearly) when $\mu=0$ (respectively $\mu>0$). Therefore, settings in which $\mu>0$ are the most promising in terms of gains in favor of the \emph{tunable} approach, thanks to the inherent high variance of \emph{impact coefficients}. Figure \ref{fig:synthetic_reg} undoubtedly confirmed this hope with substantial gains for our \emph{tunable} approach although problem's conditioning stays relatively small, i.e.\@ $\mu/L \simeq 10^{-5}$. Both Figure \ref{fig:synthetic_no_reg} and \ref{fig:synthetic_reg} show that the benefits of high discrepancy in \emph{impact coefficients} (hence in optimal inexactness schedules, see \eqref{eq:delta_optimal_simplified}) are enhanced by cheap oracles, i.e.\@ $r\to 0$. We point out as a generic comment that for a small number of iterations, the additive effect of inexactness does not dominate the \emph{error-free} convergence bound and one can barely observe any difference between our optimized approach and other inexactness schedules. Finally, one can observe that some of the results linked to more accurate \emph{reference} inexactness of $\bar{\delta} \leq 10^{-5}$ led to poorer primal optimality gaps for $N = 10^2$. This unexpected phenomenon can be explained by the fact that for bigger values of $\delta$, \eqref{eq:process} might actually yield more aggressive and successful descent directions, compared to plain accurate gradients, again in a regime where the additive impact of inexactness is of secondary importance.

\begin{figure}[ht]%
    \centering
    \vspace{-20pt}
    \subfloat[\centering logarithmic oracle cost ($r=0$)]{{\includegraphics[width=7.5cm]{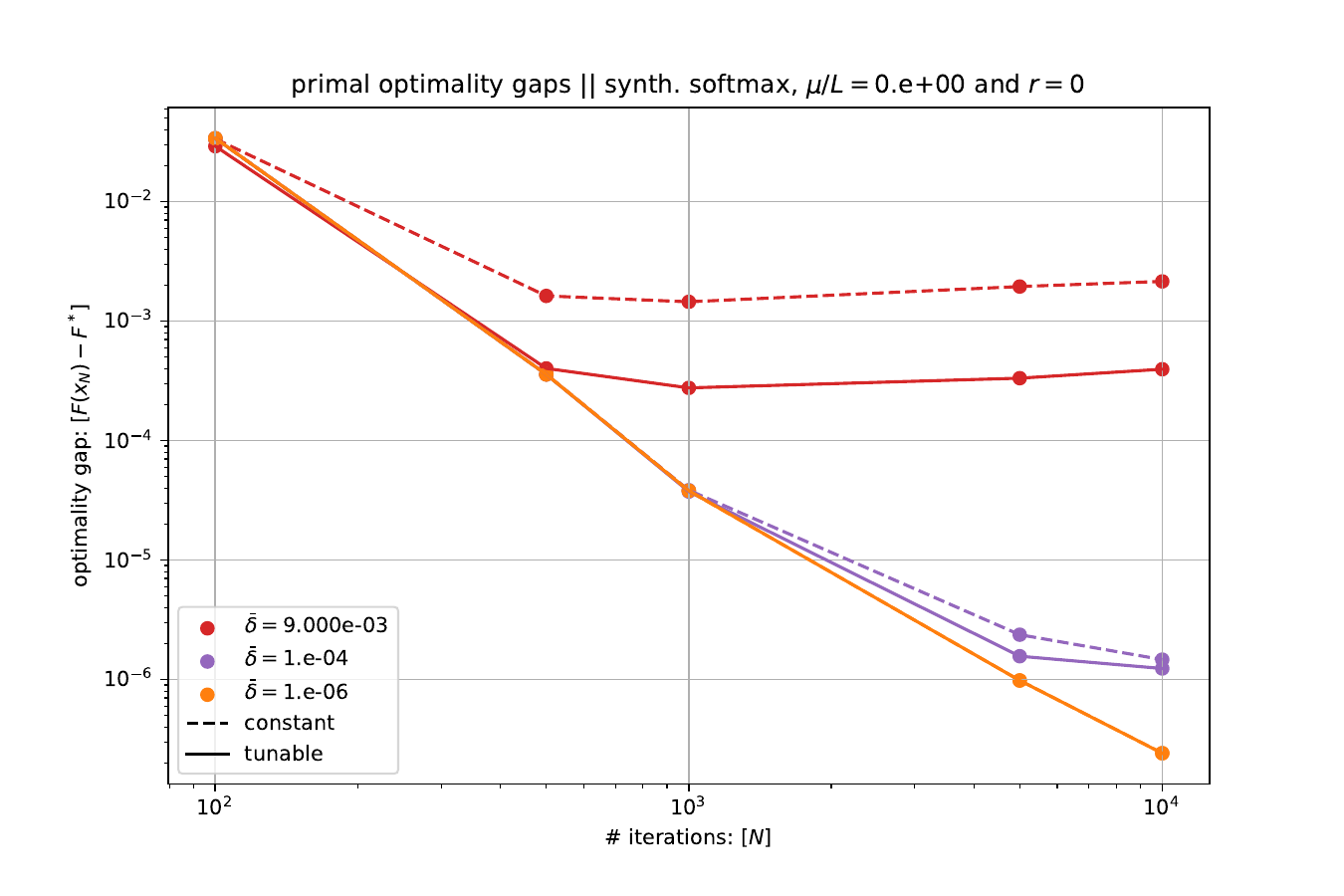} }}%
    \subfloat[\centering inversely proportional oracle cost ($r=1$)]{{\includegraphics[width=7.5cm]{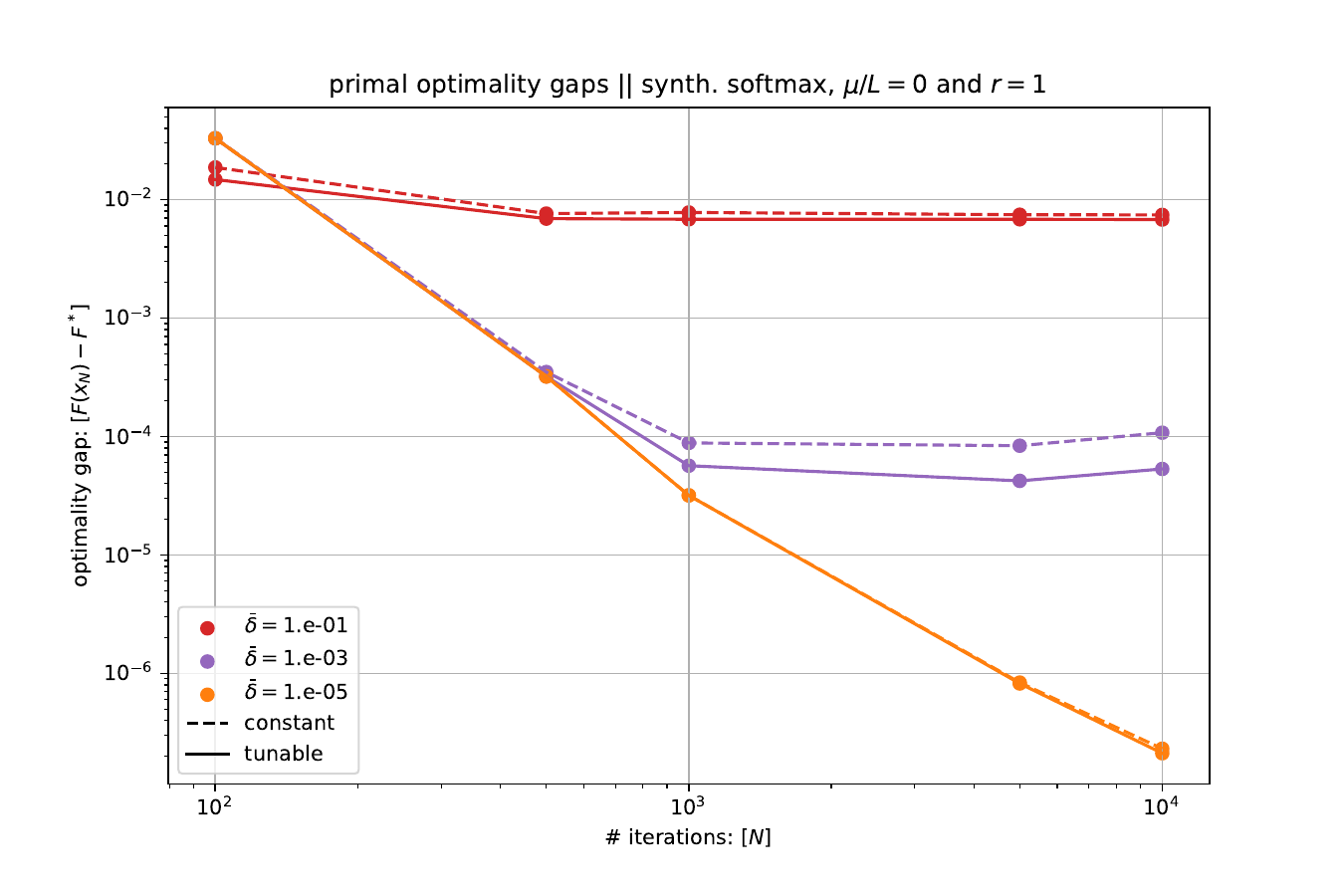} }}%
    \caption{(no regularization, $\mu=0$) | Softmax Optimization with Synthetic Noise }%
    \label{fig:synthetic_no_reg}%
\end{figure}

\begin{figure}[ht]%
    \centering
    \vspace{-20pt}
    \subfloat[\centering logarithmic oracle cost ($r=0$)]{{\includegraphics[width=7.5cm]{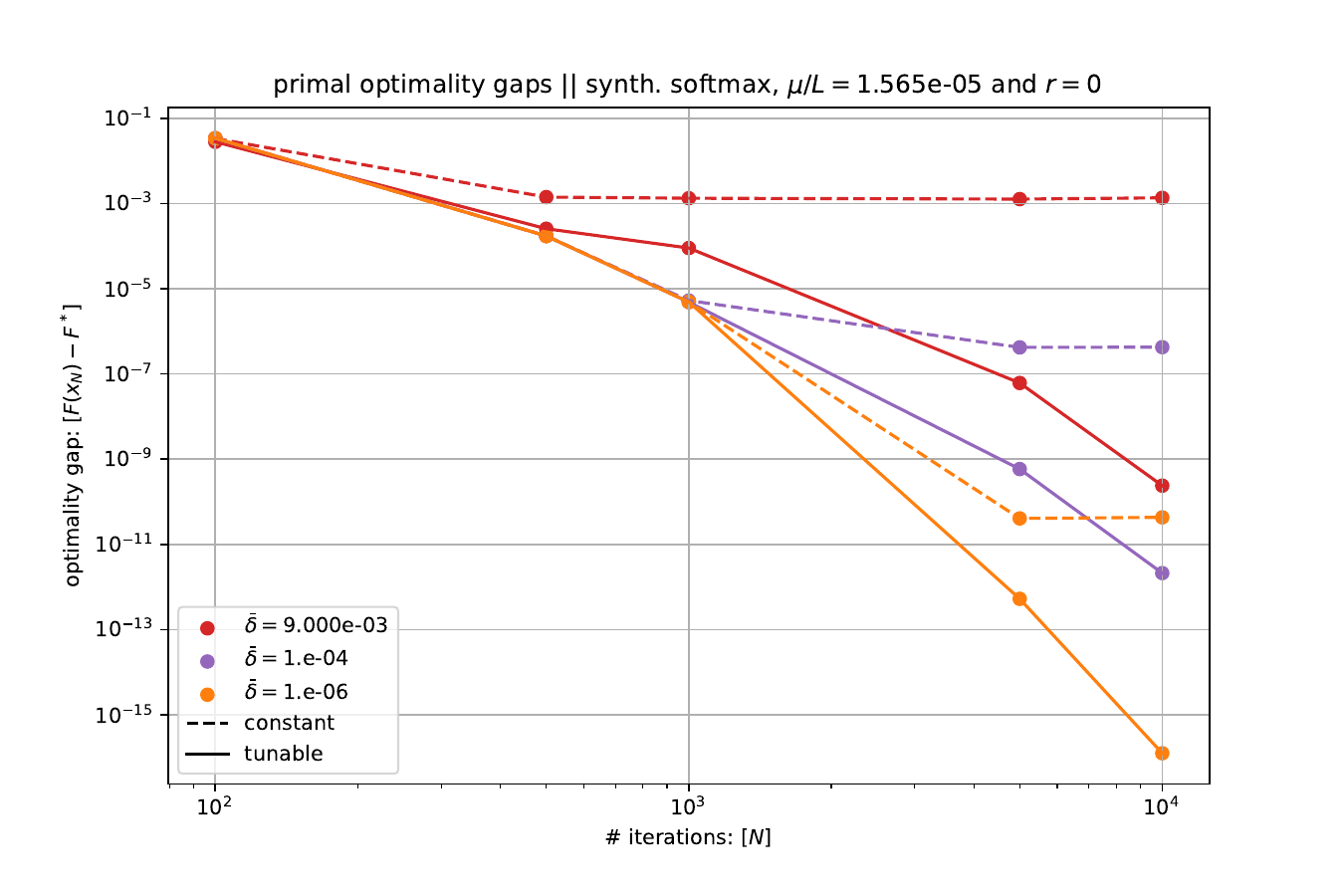} }}%
    \subfloat[\centering inversely proportional oracle cost ($r=1$)]{{\includegraphics[width=7.5cm]{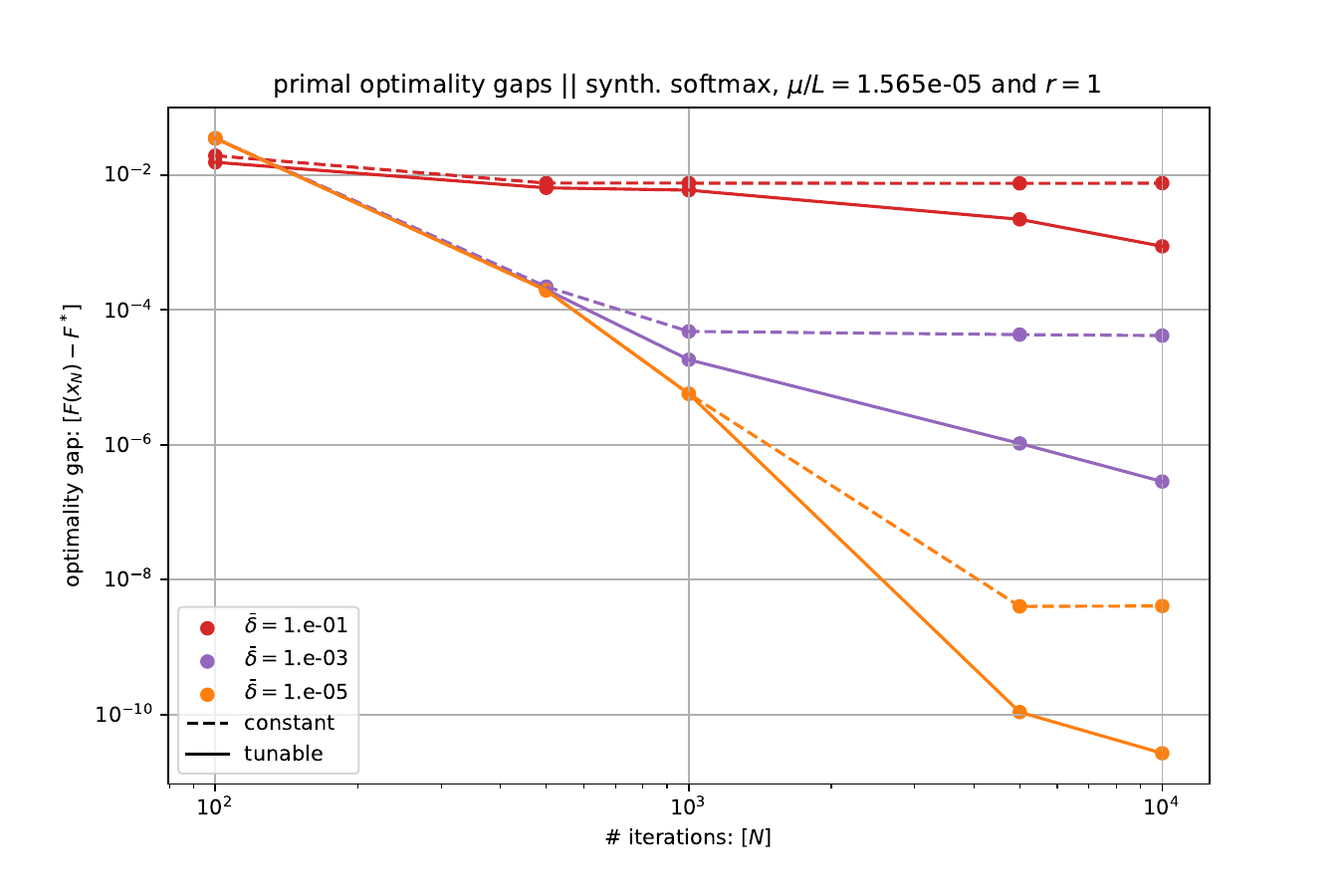} }}%
    \caption{(regularization, $\mu>0$) | Softmax Optimization with Synthetic Noise }%
    \label{fig:synthetic_reg}%
\end{figure}

\subsection{Experiment 2 \& 3 | Robust Optimization over Convex Hull}
As depicted in Example \ref{example_detailed_2}, one may choose an \emph{anchor scenario} $\bar{\theta} \in \textbf{conv}(\Theta)$ and a parameter $\sigma>0$ in order to regularize the inner maximization problem in \eqref{eq:synthetic_true} (right-hand side) yielding a smoothed outer level objective \cite{Nes05}. Then, a challenging task consists of minimizing the worst possible outcome of the inner-regularized objective over the convex hull of all the \emph{scenarios}, i.e.\@

\begin{equation}
\min_{x \in \Delta}\,\max_{\theta \in \textbf{conv}(\Theta)}\,\langle \theta, x\rangle - \frac{\sigma}{2}||\theta-\bar{\theta}||^2 + \frac{\mu}{2}||x||^2
\label{eq:offline_true}
\end{equation}
To that end, we adopt the \emph{anchor} $\bar{\theta} = n^{-1}\, \sum_{i=1}^{n}\, \theta_i$ and construct the problem 
\begin{equation}
F^* = \min_{x \in \mathbb{R}^d}\,\overbrace{\underbrace{\frac{\mu}{2}||x||^2+\max_{w\succeq \mathbf{0},\, ||w||_1=1}\,\underbrace{\langle O^Tw ,x\rangle-\frac{\sigma}{2}||O^Tw -\bar{\theta}||^2}_{q(w ; x)}}_{f(x)} + \underbrace{\chi_{\Delta}(x)}_{\Psi(x)}}^{F(x)}
\label{eq:off_paper}
\end{equation} 
Again, as motivated in Example \ref{example_detailed_2}, we use FISTA from \cite{vdb22} with a momentum depending on the number $\hat{\kappa}= \frac{\lambda_{\text{min}}(OO^T)}{\lambda_{\text{max}}(OO^T)}$ in order to get $w_x \succeq \mathbf{0}$, $||w_x||_1=1$ ensuring 
\begin{equation}
f(x) - \bigg(\frac{\mu}{2}||x||^2 + q(w_x ; x)\bigg) \leq \delta
\label{eq:offline_requirement}
\end{equation}
Then $(\tilde{f}(x),\nabla \tilde{f}(x)) = (\frac{\mu}{2}||x||^2 + q(w_x ; x) , \mu x + O^T w_x)$ provides $(2\delta,2\cdot \sigma^{-1} + \mu,\mu)$ inexact information about $f$, tuple used within FGM to solve problem \eqref{eq:offline_true}. \\
We monitor the quality of a candidate $w_{x}^{(\omega)}$ at any point $x \in \Delta$ after $\omega$ inner-iterations thanks to the Frank-Wolfe gap \eqref{eq:FW_gap}. As soon as $\text{FW}(\omega) \leq \delta$ then $w_x^{(\omega)}$ fulfills \eqref{eq:offline_requirement}.
\begin{equation} 
\text{FW}(\omega) = \max_{j \in [\omega]}\,  \max_{w\succeq \mathbf{0},\, ||w||_1=1}\,q(w_{x}^{(j)} ; x)  + \langle \nabla q(w_{x}^{(j)} ; x), w - w_x^{(j)} \rangle  - q(w_{x}^{(\omega)} ; x) 
\label{eq:FW_gap}
\end{equation}
In order to speed-up computations to obtain approximate solutions of the inner problem, we give as starting point of FISTA the solution obtained at the last oracle call.
\paragraph*{Estimation of $F^*$} Let $\hat{x}^* \in \Delta$ be the best known solution to \eqref{eq:off_paper}. Then, by convexity of the objective, we estimate a lower-bound for $F^*$ as follows
\begin{equation} 
F^* = \min_{x \in \Delta} f(x) \geq \min_{x \in \Delta}\, \underbrace{\frac{\mu}{2}||\hat{x}^*||^2 + q(w_{\hat{x}^*} ; \hat{x}^*)}_{\tilde{f}(\hat{x}^*)}  + \langle \underbrace{\mu \hat{x}^* + O^T w_{\hat{x}^*}}_{\nabla \tilde{f}(\hat{x}^*)} , x-\hat{x}^*\rangle + \frac{\mu}{2}||x-\hat{x}^*||^2
\label{eq:lower_bound_opt}
\end{equation}
where $w_{\hat{x}^*}$ represents the $w$ candidate that achieved $10^{-10}$ precision when computing an approximate pair $(\tilde{f}(\hat{x}^*), \nabla \tilde{f}(\hat{x}^*)) \simeq (f(\hat{x}^*),\nabla f(\hat{x}^*))$ with respect to criterion \eqref{eq:offline_requirement}.

\subsubsection*{Offline Schedules (Experiment 2)}
\paragraph*{Test instances}
Here, we run FGM with stepsize $L^{-1} = (2 \sigma^{-1}+\mu)^{-1}$ for a predefined number of iterations $N$. We have tested the two following settings in terms of scaling and dimension; $(d,n,p,\sigma) =(100,500,5^{-1},10^{-3})$ and $(d,n,p,\sigma) =(1000,500,5^{-1},10^{-3})$. The first setting led to $\hat{\kappa} = 0$ hence implying that the oracle cost parameter $r$ equals $\frac{1}{2}$ whereas under the second setting $\hat{\kappa} > 0$ and $r$ is taken as $0$, recalling the arguments of Example \ref{example_detailed_2}. For each instance $(\mu,r,N,\bar{\delta}) \in \{0,10^{-1}\} \times \{\frac{1}{2}\} \times \{10,500,1000,5000\} \times \{10^{-1},10^{-3},10^{-5}\}$ or $(\mu,r,N,\bar{\delta}) \in \{0,10^{-1}\} \times \{0\} \times \{10,500,1000,5000\} \times \{9\cdot10^{-3},10^{-4},10^{-6}\}$, we obtain our \emph{tunable} $\delta^*$ by solving problem \eqref{eq:master_problem} in the very same fashion as Experiment 1. Although the overall computational cost of our \emph{tunable} $\{\delta_k=\delta^*_k\}_{k=0}^{N-1}$ schedule is meant to coincide with the computational cost of the \emph{constant} schedule $\{\delta_k=\bar{\delta}\}_{k=0}^{N-1}$, the oracle cost model does not always perfectly fit the true encountered computational costs required to produce the inexact information tuples. Therefore, we record the total number of inner-iterations $\sum_{k=0}^{N-1}\,\omega_k$ as well as the final primal accuracy about \eqref{eq:off_paper} for fair comparisons. This latter entails the computation of $f(x_N)=F(x_N)$, also performed at $10^{-10}$ precision with respect to criterion \eqref{eq:offline_requirement} (see above). Here, $\hat{x}^*$ stems as as the output of FGM for $8 \cdot 10^3$ iterations with \emph{constant} inexactness level of $10^{-8}$ when $r=0$. Because of much more expensive oracles, we execute only $5\cdot 10^3$ iterations at inexactness level $10^{-7}$ to produce $\hat{x}^*$ when $r=\frac{1}{2}$.
To reduce the eventual initialization bias $x_0 \sim \mathcal{U}(\Delta)$, we averaged the results of $10$ repetitions.\\

\paragraph*{Results} Here, we have plotted the terminal primal optimality gaps against the overall oracle workloads $\sum_{k=0}^{N-1}\,\omega_k$ after $N$ (outer-)iterations of FGM. Since the wall-clock times strongly correlate with the workloads, we have chosen no to display duplicate figures other than Figure \ref{fig:rlp_no_reg} and \ref{fig:rlp_reg}. Again, each dot symbolizes the averaged performances tracked for one specific inexactness schedule for one specific instance defined by the vector $(\mu,r,N,\bar{\delta})$. One can observe that most of the trends highlighted in the paragraph about the results of our synthetic first experiment still hold for this real second experiment. Notably, it looks like that regularization plays a key role in the substantial superiority of our optimized \emph{tunable} approach. Again, this is understood by a variability of possibly several orders of magnitude in the value of \emph{impact coefficients} $\{a_k\}_{k=0}^{N-1}$ when $\mu>0$. In every case, our \emph{tunable} approach was more efficient, i.e.\@ it dominated the \emph{constant} approach in the Pareto sense in the plane of reached primal optimality versus unit of workload spent in the inner problems. On Figure \ref{fig:rlp_no_reg} (a) and \ref{fig:rlp_reg} (a), the \emph{tunable} schedule based on a \emph{reference} inexactness $\bar{\delta}$ fixed to $10^{-6}$ (yellow) did take more inner-iterations for $N=500$ than $N=1000$. Although not intuitive, this behaviour can be explained. It happens that oracles cost way less than expected, especially for low requested accuracies. In what concerns Experiment 2,\\ it could even happen that FISTA's starting iterate at outer-iteration $k$, $w^{(0)}_{x_{k}} = w^{(\omega_{k-1})}_{x_{k-1}}$, already fulfills $\delta_k$ inexactness for free, i.e. $\omega_k = 0$. It turns out that our \emph{tunable} approach for $N=1000$ benefited more of such \emph{cheap-meal} phenomenons since its linked schedule demands less accurate oracles than the one with $N=500$ in early iterations.
Overall, a \emph{reference} inexactness $\bar{\delta}\in [10^{-3},10^{-2}]$ provided the best \emph{trade-off} to reach a given primal accuracy target within this \emph{offline} setting.
\vspace{5pt}
\begin{figure}[ht]%
    \centering
    \vspace{-20pt}
    \subfloat[\centering smooth strongly convex inner problem ($r=0$)]{{\includegraphics[width=7.5cm]{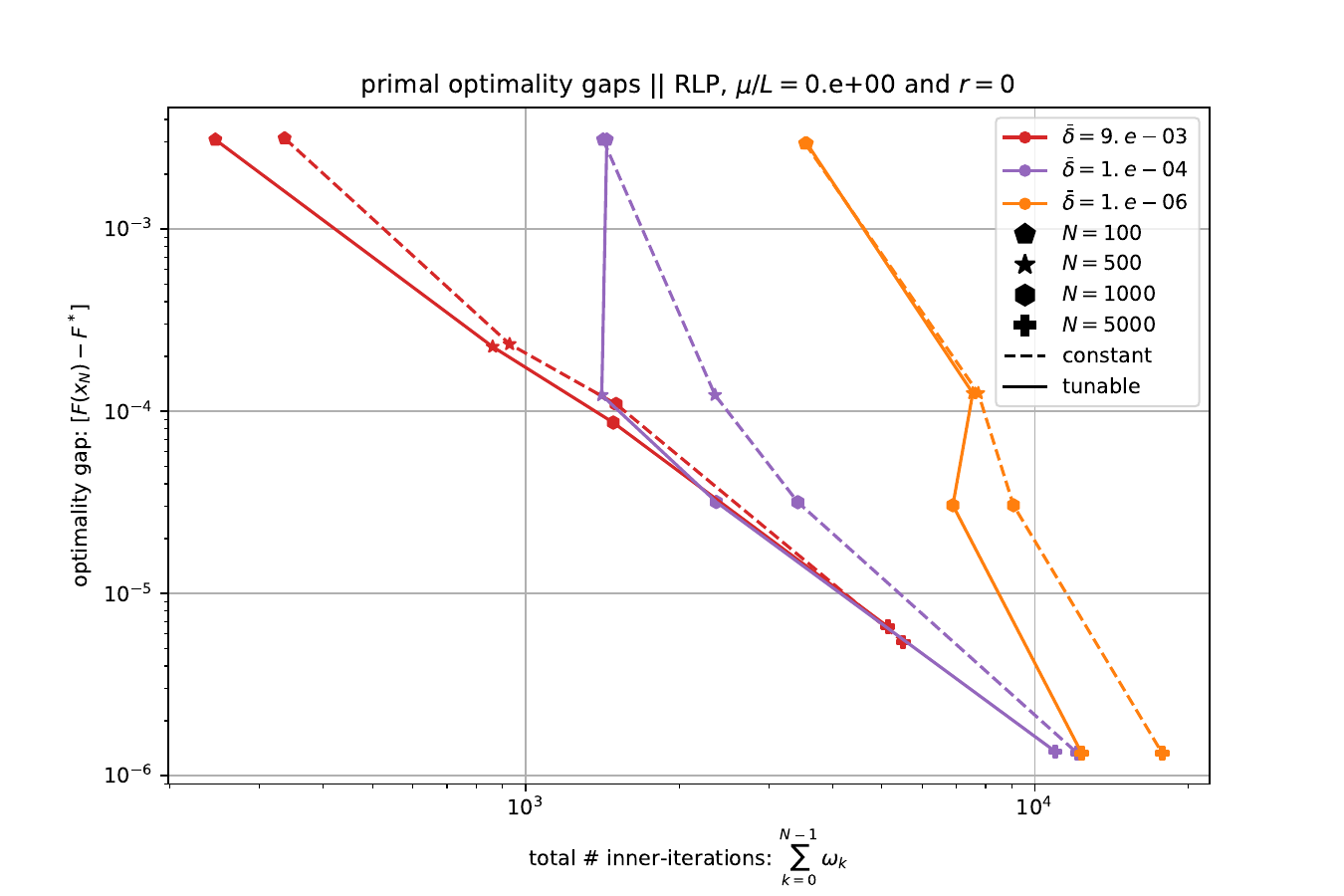} }}%
    \subfloat[\centering smooth convex inner problem ($r=\frac{1}{2}$)]{{\includegraphics[width=7.5cm]{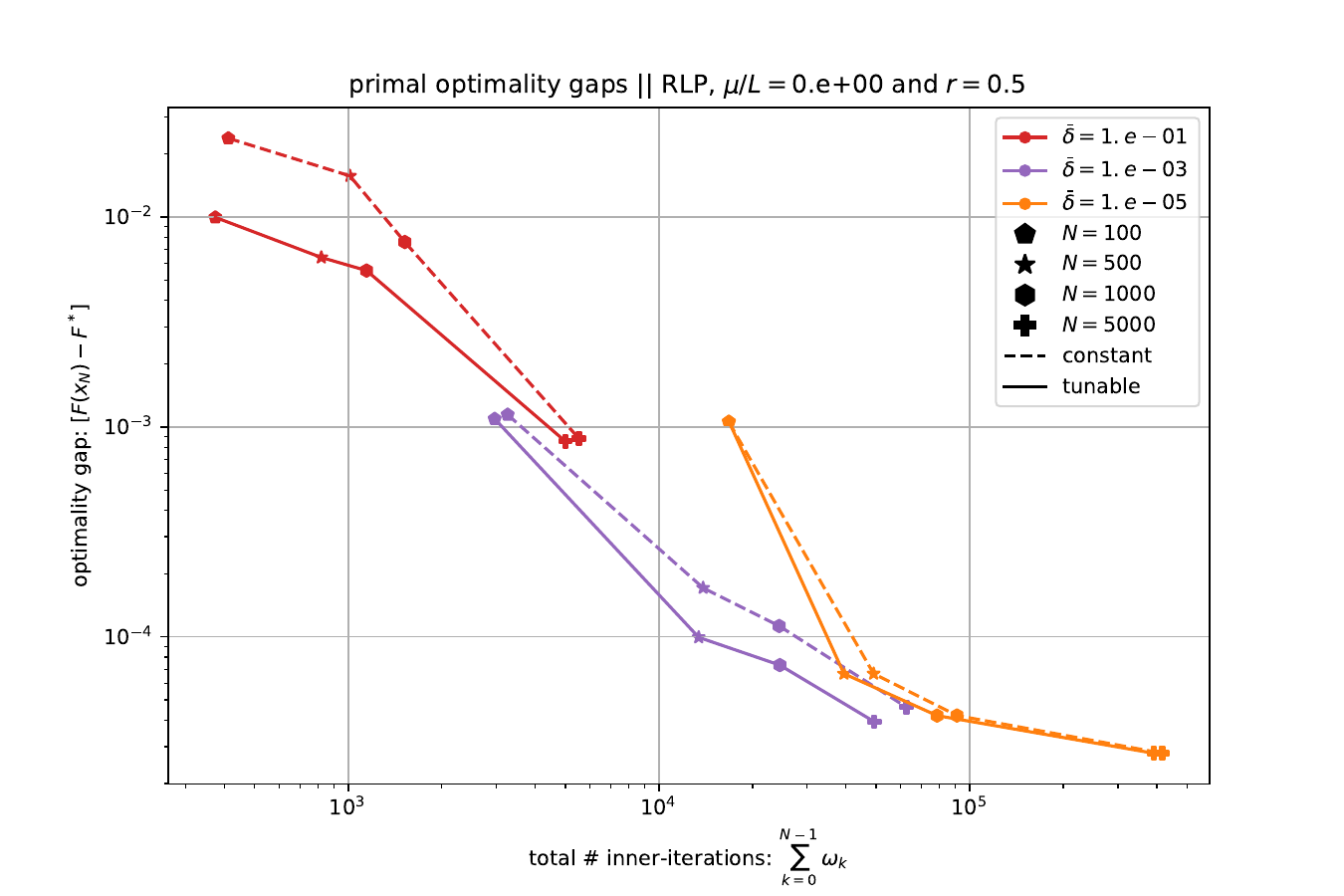} }}%
    \caption{(no regularization, $\mu=0$) | R.O. over Convex Hull, \emph{offline} with multiple $\bar{\delta}$}%
    \label{fig:rlp_no_reg}%
\end{figure}

\begin{figure}[ht]%
    \centering
    \vspace{-20pt}
    \subfloat[\centering smooth strongly convex inner problem ($r=0$)]{{\includegraphics[width=7.5cm]{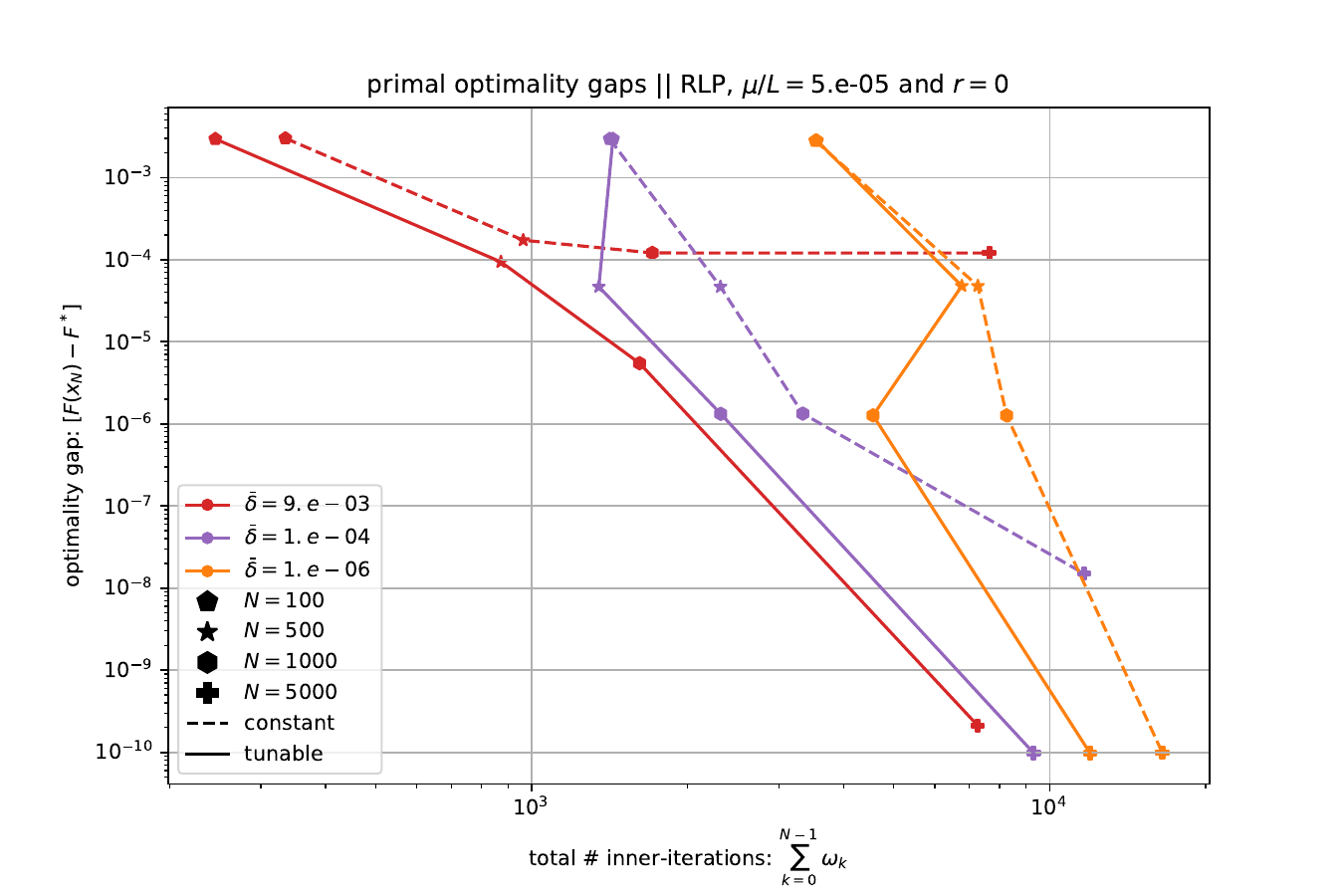} }}%
    \subfloat[\centering smooth convex inner problem ($r=\frac{1}{2}$)]{{\includegraphics[width=7.5cm]{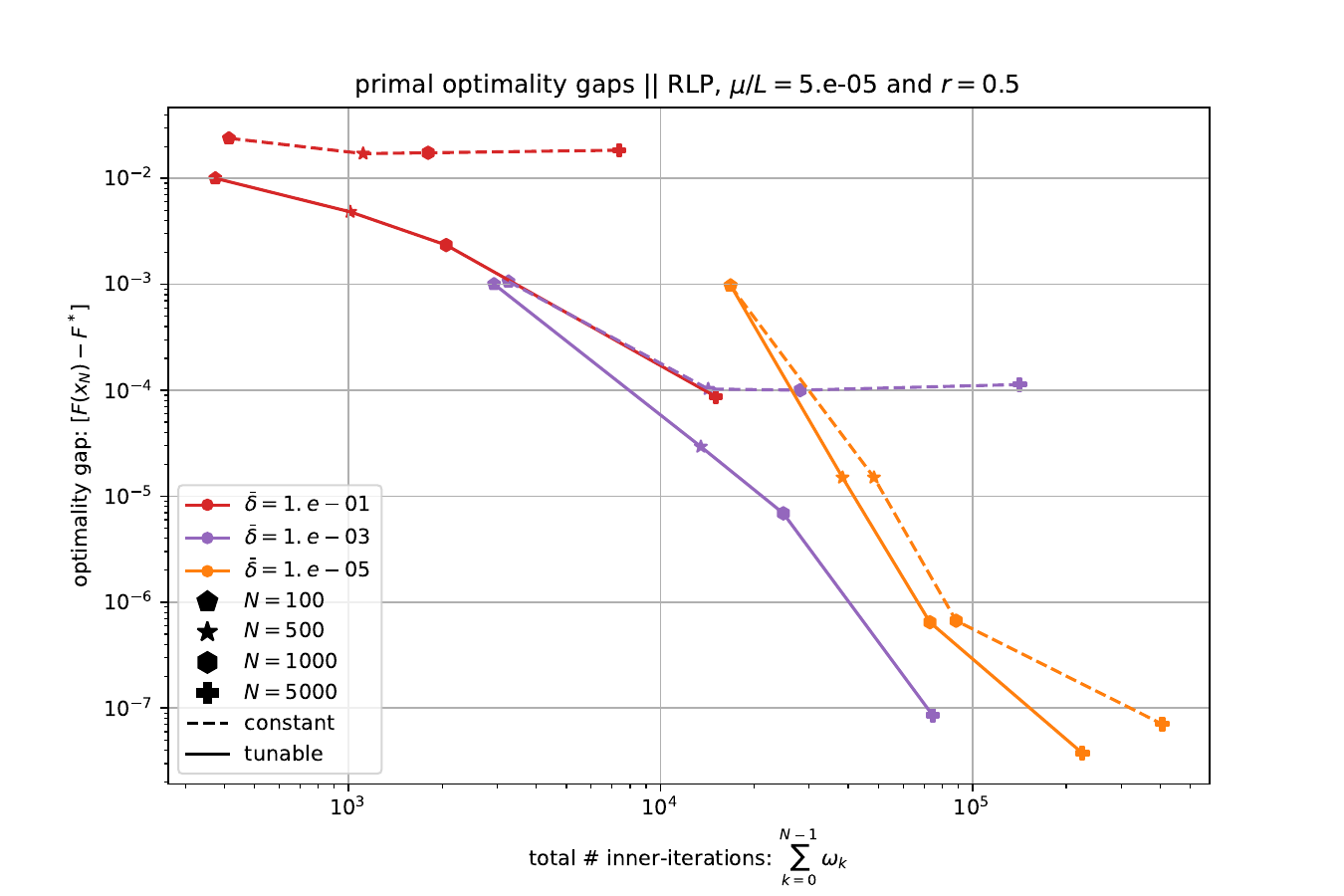} }}%
    \caption{(regularization, $\mu>0$) | R.O. over Convex Hull, \emph{offline} with multiple $\bar{\delta}$}%
    \label{fig:rlp_reg}%
\end{figure}

\newpage
\subsubsection*{Online Schedules (Experiment 3)}
\paragraph*{Test instances}
Finally, we run FGM with adaptive stepsizes $L_k^{-1} > 0$ for any $k \geq 0$. A stepsize is validated at iteration $k$ as soon as the inequality 
\begin{equation}
\tilde{f}(x_{k+1}) \leq \tilde{f}(y_k) + \langle \nabla \tilde{f}(y_k) , x_{k+1}-y_k \rangle + \frac{L_{k+1}}{2}||x_{k+1}-y_k||^2 + 2\,\delta_k
\label{eq:local_smoothness_validation}
\end{equation}
holds for $(2\delta_k, \sigma^{-1}+\mu,\mu)$ inexact information tuples at feasible points $x_{k+1}$ and $y_k$ respectively. FGM increases by $3/2$ the stepsize when an iteration succeeds, i.e.\@ \eqref{eq:local_smoothness_validation} passes, and decreases it by a factor $2$ otherwise. Note that \eqref{eq:local_smoothness_validation} is always satisfied for $L_{k+1} \geq \sigma^{-1} + \mu$. Convergence guarantees like \eqref{eq:convergence_model_example_2} are conserved \cite{Stonyakin20} but now the \emph{impact coefficients} $a_k = A_{k+1}$ depend on $L_{k+1}$ via the recursion ($A_0 = 0$)
\begin{equation}
A_{k+1}(1+\mu A_k) = L_{k+1}(A_{k+1}-A_k)^2
\label{eq:recursion_impact}
\end{equation}
and are therefore not predictable in advance. This paves the way for the application of
our \emph{online} approach \eqref{eq:delta_optimal_simplified_contd} with $N_r=50$. $N_r$ is kept small in practice, its sole interest being to specify the very first inexactness parameters before the \emph{online} approach takes over. Yet, one must simulate the $N_r$ first coefficients $\{a_k\}_{k=0}^{N_r-1}$ required to devise the \emph{offline} optimized schedule $\{\delta_k^*\}_{k=0}^{N_r-1}$ from \eqref{eq:master_problem}. To that purpose, one fakes the value of $L_{k+1}$ fixing it to $L=\sigma^{-1}+\mu$ within \eqref{eq:recursion_impact} and uses the computed $\{a_k = A_{k+1}\}_{k=0}^{N_r-1}$. \\
For each level of \emph{reference} inexactness $\bar{\delta} \in \{9\cdot10^{-3},10^{-4},10^{-6}\}$, we tried 4 different inexactness schedules. I.e. for any iteration index $k\geq0$, the \emph{constant} one $\delta_k=\bar{\delta}$, the heuristic \emph{online tunable} (as described above) with $a_k = A_{k+1}$, the fully sublinear monotonically decreasing tuned as in \cite{Villa13} (\texttt{poly-3}), $\delta_k  = \bar{\delta}\,(k+1)^{-3}$ and the fully linear one \cite{BTB22} (\texttt{linear}) with \@$\delta_k  = \bar{\delta}\,(1-\sqrt{\mu / L })^{-k}$.
For this experiment, $\hat{x}^*$ (see \eqref{eq:lower_bound_opt}) is taken as the output of adaptive FGM after at $3 \cdot 10^3$ iterations with \emph{constant} inexactness of $10^{-12}$ for $r=0$ and $10^{-8}$ otherwise for $r=\frac{1}{2}$. 
We have conducted $5$ random initializations of the settings;
$(d,n,p,\sigma) =(50,100,5^{-1},10^{-3})$ which, akin to Experiment 2, led to $\hat{\kappa}=0$ and thus $r=\frac{1}{2}$ and $(d,n,p,\sigma) =(200,100,5^{-1},10^{-3})$ that gave $\hat{\kappa}>0$ and, accordingly, $r=0$. Finally, we set the regularization to $\mu=10^{-1}$.\newpage

\paragraph*{Results} We decided to keep track of the primal objective value every $10$ outer-iteration, i.e.\@ when $k\, \text{mod}\, 10 = 0$, we did compute $\tilde{f}(x_k) \simeq f(x_k)$ with precision $10^{-10}$. After the runs, we turned these primal objective values into primal optimality gaps thanks to our estimation of $F^*$.  
We can observe on Figure \ref{fig:online1}, \ref{fig:online2} and \ref{fig:online3}  that our \emph{online} heuristic \emph{tunable} inexactness schedule adapts well to the \emph{error-free} speed of convergence of FGM, symbolized by the value of the main sequence $\{A_{k}\}_{k \in \mathbb{N}}$. \\We recall that its starting growth is of order $\mathcal{O}(k^2)$ but eventually, since $\mu >0$ in the present experiment, a linear growth rate shows up lately \cite{Stonyakin20,BTB22}. It is worth noticing that (\texttt{linear}) schedule was the best in the low \emph{reference} inexactness regime. Nevertheless, overall, neither (\texttt{poly-3}) or (\texttt{linear}) adapts as well as our schedule. Indeed, although comparable to \emph{tunable} when $r = \frac{1}{2}$ on Figure \ref{fig:online1} (b), \ref{fig:online2} (b) and \ref{fig:online3} (b), \texttt{poly-3} does not take advantage of the regularization whereas \texttt{linear} appears way too conservative on Figure \ref{fig:online2} and \ref{fig:online3}, perhaps in order to preserve as much as possible the asymptotical rate of convergence of the \emph{error-free} counterpart of FGM. The reference \emph{constant} schedule for inexactness turns out to be competitive until \emph{error-accumulation} (see Remark \ref{inexact_model_comments2}) undermines further improvements.
\begin{figure}[ht]%
    \centering
    \subfloat[\centering smooth strongly convex inner problem ($r=0$)]{{\includegraphics[width=7.3cm]{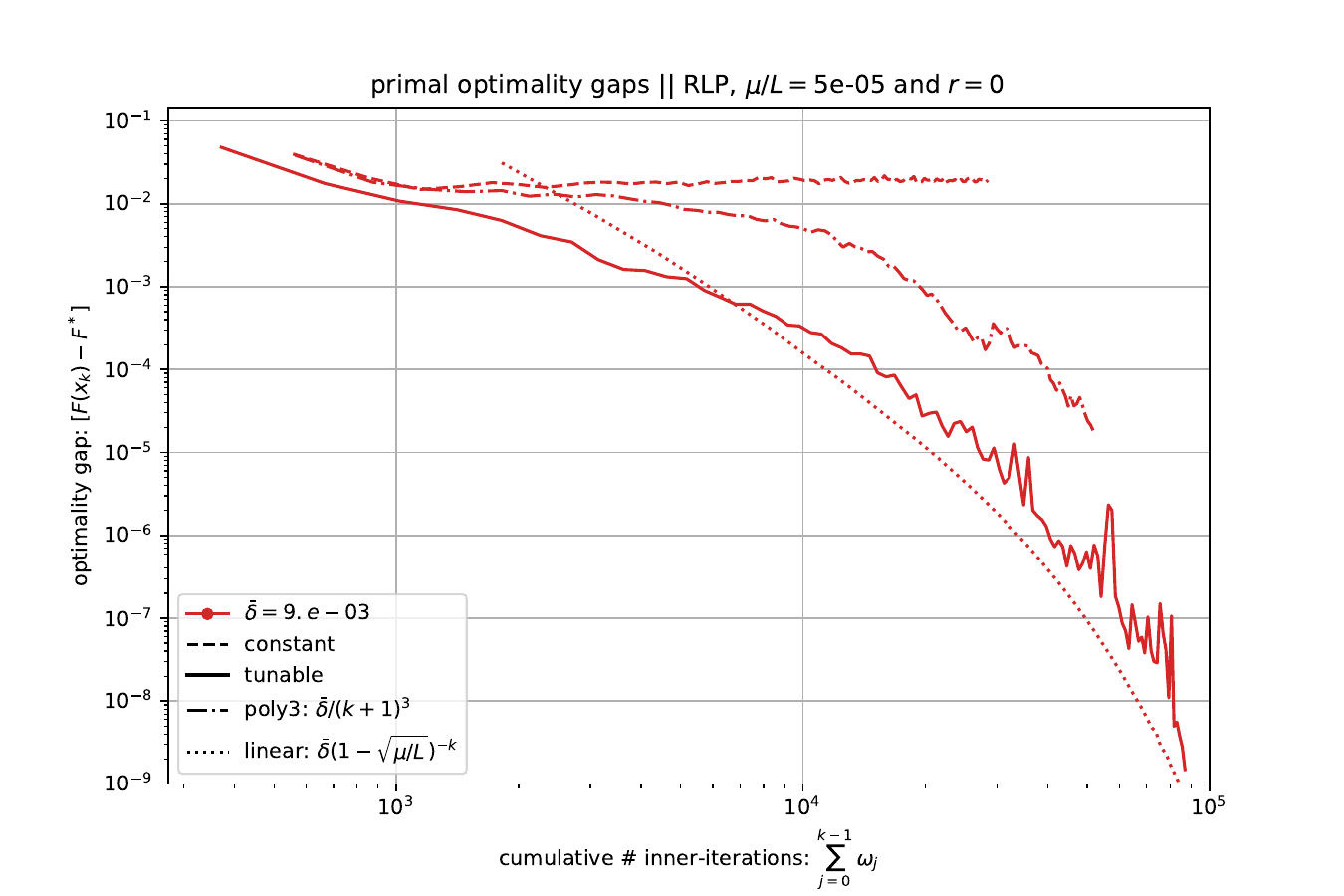} }}%
    \subfloat[\centering smooth convex inner problem ($r=\frac{1}{2}$)]{{\includegraphics[width=7.3cm]{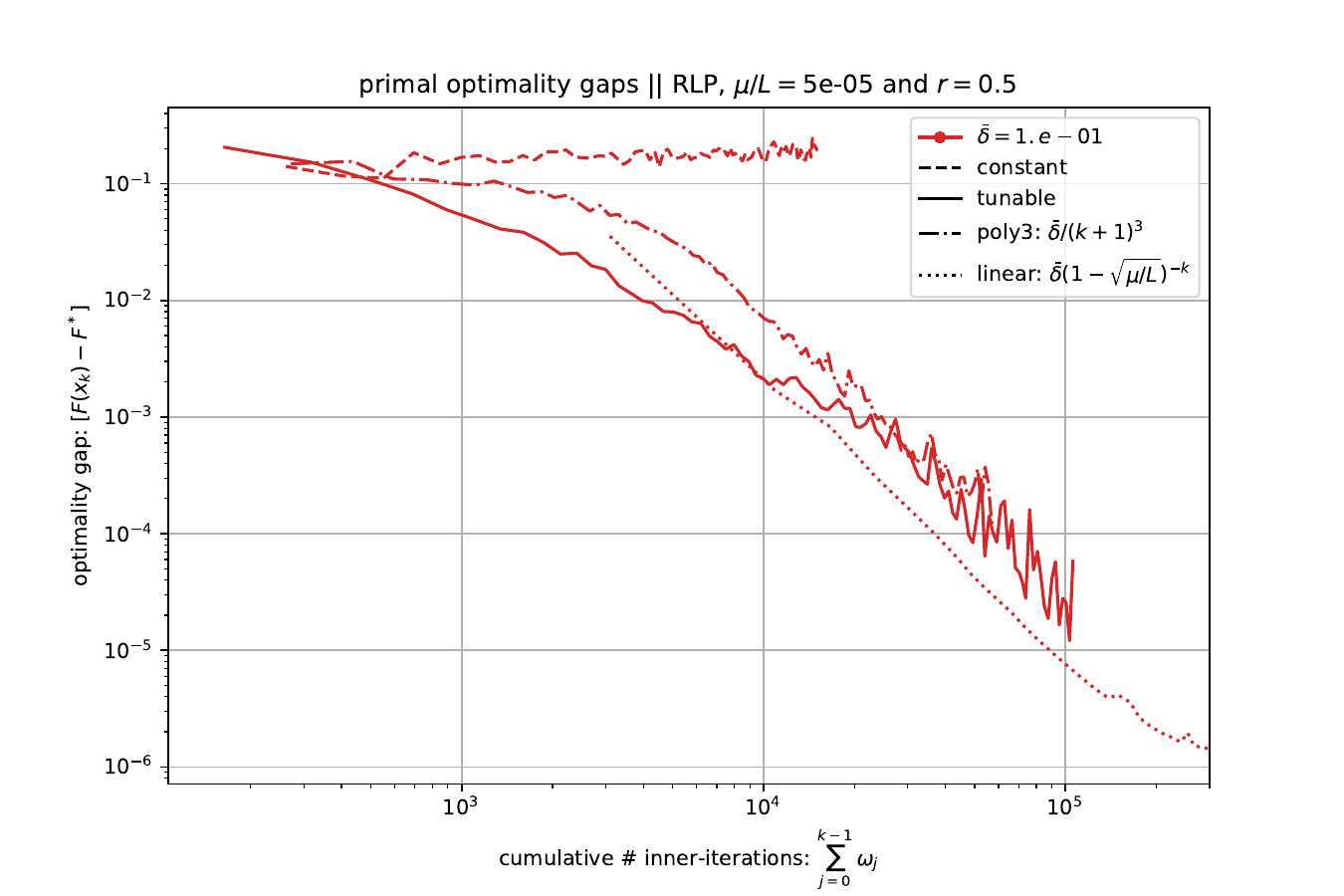} }}%
    \caption{(regularization, $\mu>0$) | R.O. over Convex Hull, \emph{online} with low accuracy $\bar{\delta}$ }%
    \label{fig:online1}%
\end{figure}
\begin{figure}[ht]%
    \centering
    \vspace{-20pt}
    \subfloat[\centering smooth strongly convex inner problem ($r=0$)]{{\includegraphics[width=7.3cm]{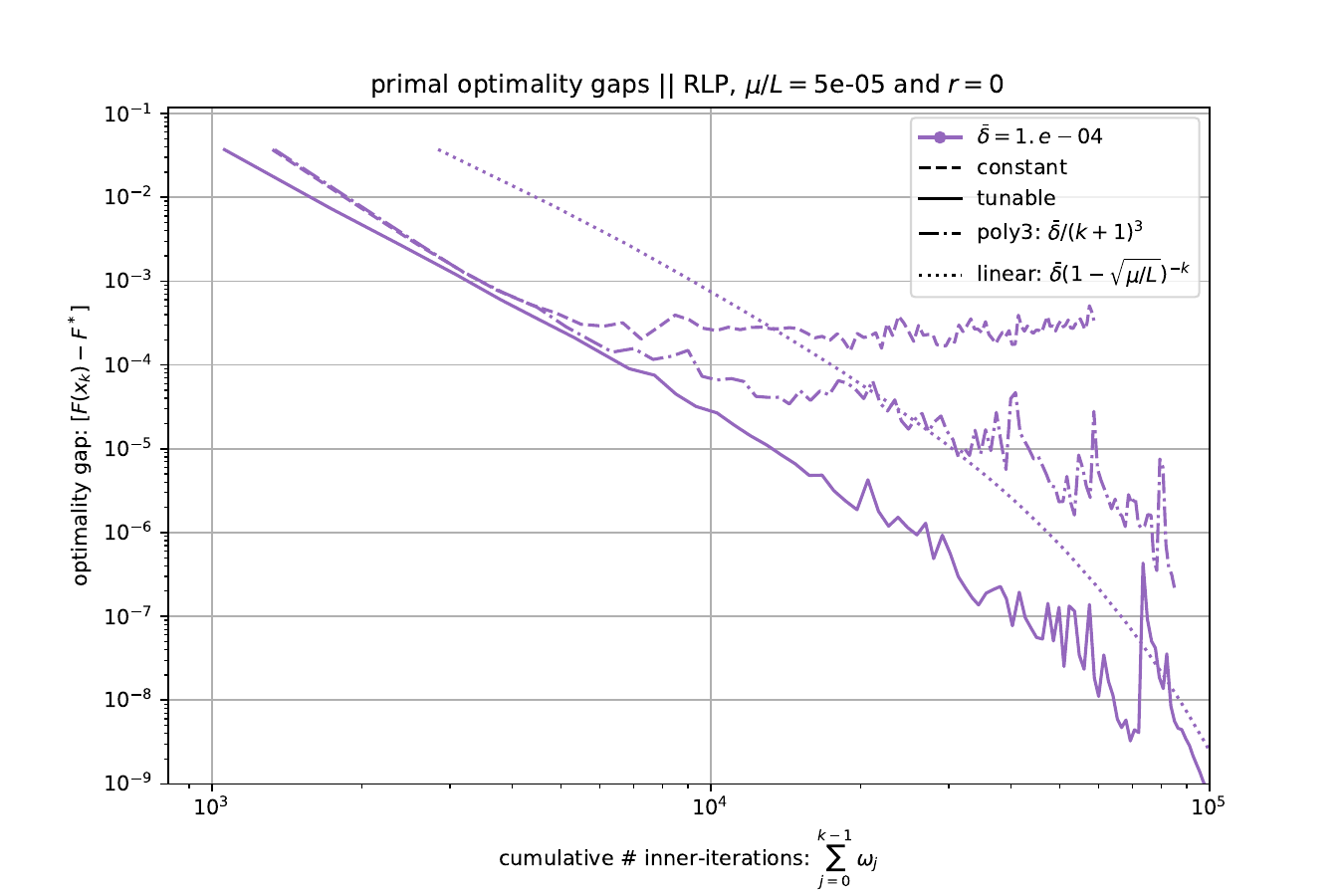} }}%
    \subfloat[\centering smooth convex inner problem ($r=\frac{1}{2}$)]{{\includegraphics[width=7.3cm]{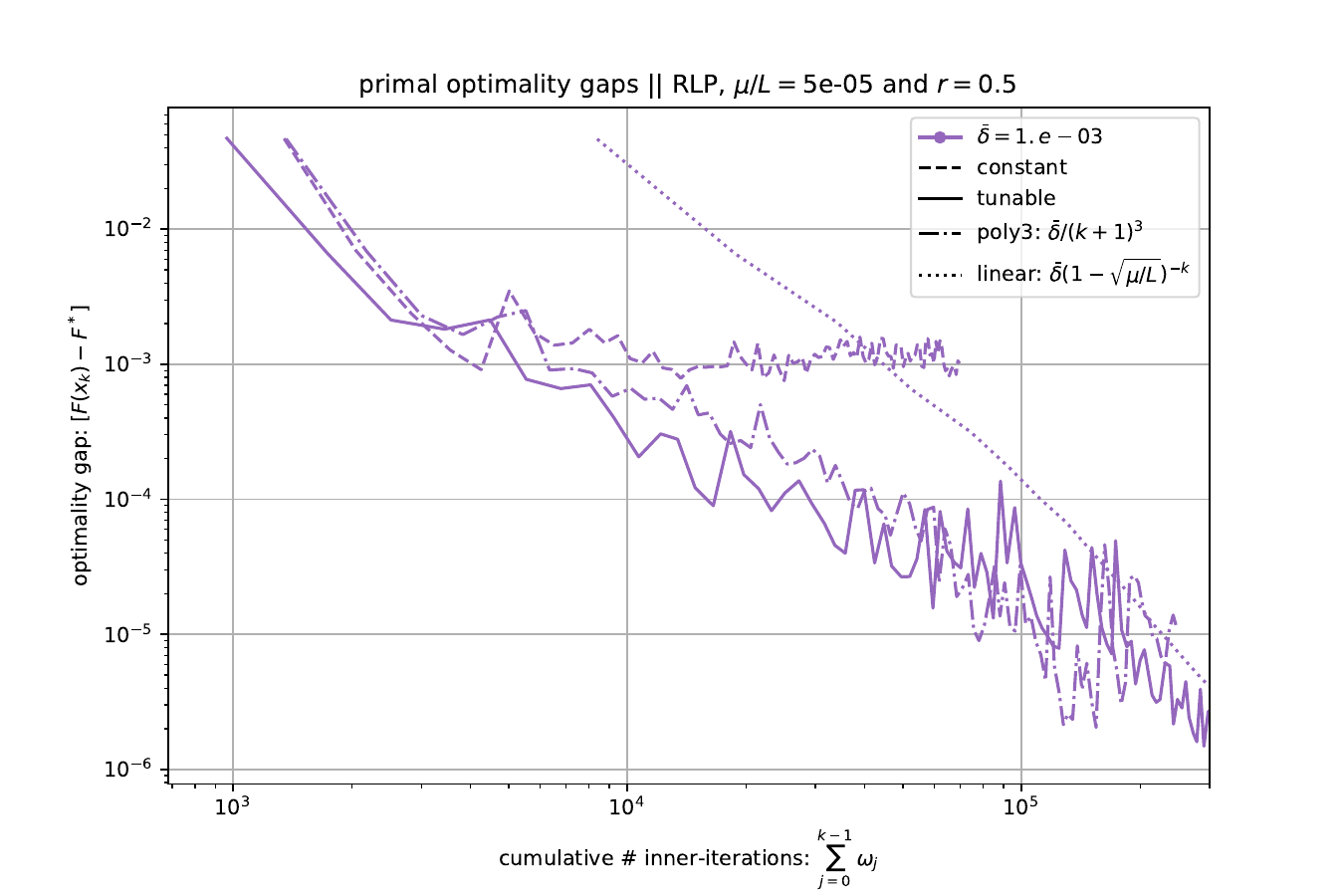} }}%
    \caption{(regularization, $\mu>0$) | R.O. over Convex Hull, \emph{online} with medium accuracy $\bar{\delta}$ }%
    \label{fig:online2}%
\end{figure}
\begin{figure}[ht]%
    \centering
   \vspace{-20pt}
    \subfloat[\centering smooth strongly-convex inner problem ($r=0$)]{{\includegraphics[width=7.3cm]{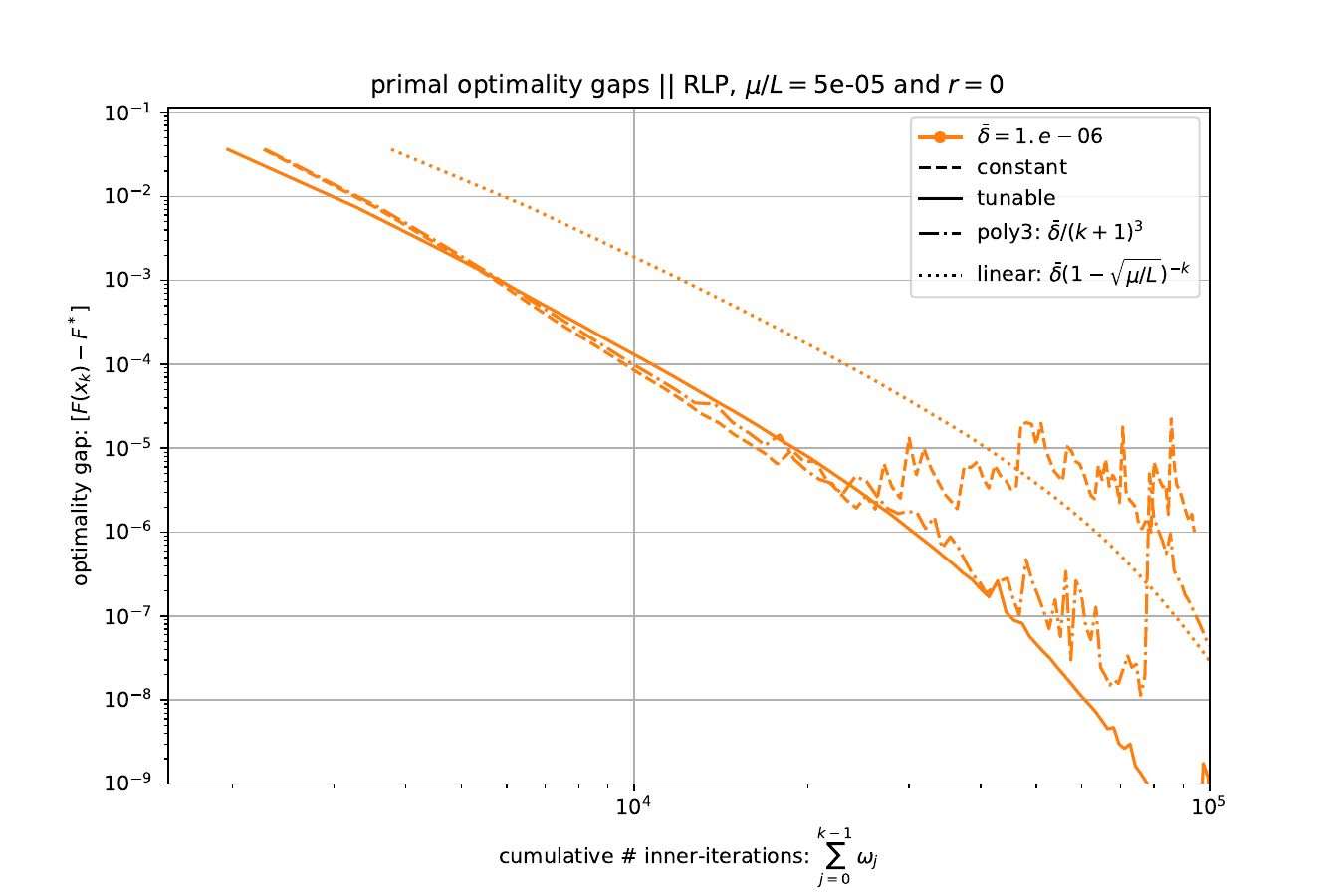} }}%
    \subfloat[\centering smooth convex inner problem ($r=\frac{1}{2}$)]{{\includegraphics[width=7.3cm]{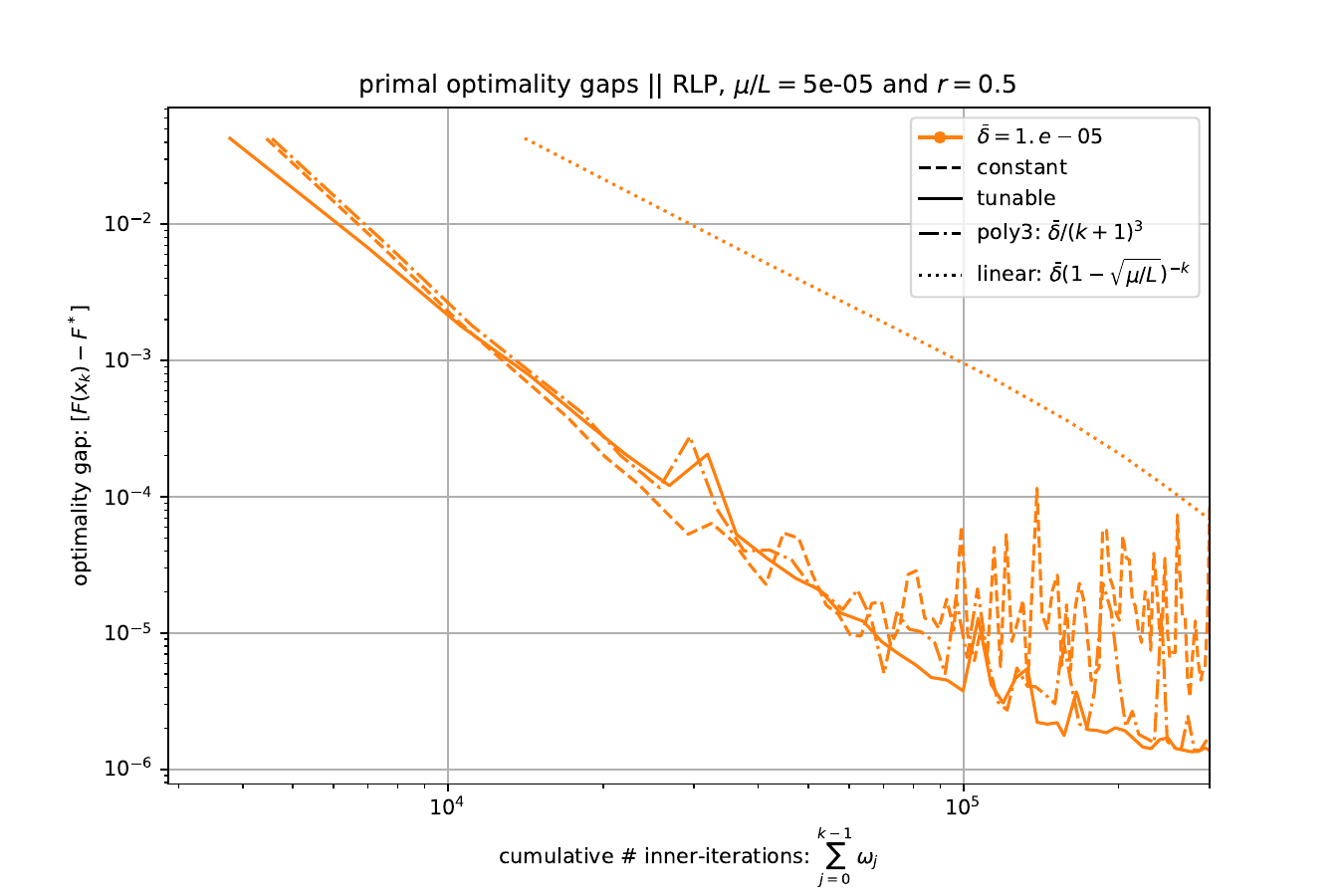} }}%
    \caption{(regularization, $\mu>0$) | R.O. over Convex Hull, \emph{online} with high accuracy $\bar{\delta}$ }%
    \label{fig:online3}%
\end{figure}
}

\newpage
\section{Conclusion}
In this paper we considered a class of iterative algorithms, namely Tunable Oracle Methods (TOM), for which one can take advantage from the combined knowledge of both the computational cost associated with the oracle calls and the impact of associated inexactness on the convergence. We have shown how to choose optimally the level of inexactness that should requested at each iteration. Our numerical experiments confirm the superiority of these optimal schedules over the use of constant inexactness, for a given total computational budget, and also compare favorably with existing baselines from the literature. 
\edited{
Future work may include a \emph{tight analysis} of more iterative methods that involve controllable inexactness, e.g.\@ bilevel learning, for which we can also hope that optimal inexactness schedules  enhance the practical performances.}
\edited{
Another direction for future research would be to gain insight about \emph{random inexactness} rather than the (possibly) adversarial model considered in this paper. In this context, the goal could be to choose the appropriate amount of work to drive the distribution of inexactness towards the local needs of the iterative algorithm.}

\section*{Acknowledgement(s)}

The authors are grateful to Yurii Nesterov and Pierre-Antoine Absil for their advice both in terms of content and presentation of the results.

\section*{Disclosure statement}
No potential conflict of interest was reported by the author(s).
\section*{Funding}

Guillaume Van Dessel is funded by the UCLouvain university as a teaching assistant. 
\bibliographystyle{abbrv}
\bibliography{TOM.bib}

\begin{thebibliography}{10}

\bibitem{CP16}
A.~Chambolle and T.~Pock.
\newblock An introduction to continuous optimization for imaging.
\newblock In {\em Cambridge University Press (CUP)}, volume~25, pages 161--319.
  Acta Numerica, 2016.

\bibitem{Devolder13}
O.~Devolder.
\newblock Exactness, inexactness and stochasticity in first-order methods for
  large-scale convex optimization.
\newblock In {\em Doctoral Thesis}, 2013.

\bibitem{Devolder2013FirstorderMW}
O.~Devolder, F.~Glineur, and Y.~Nesterov.
\newblock First-order methods with inexact oracle: the strongly convex case.
\newblock LIDAM Discussion Papers CORE 2013016, Université catholique de
  Louvain, Center for Operations Research and Econometrics (CORE), 2013.

\bibitem{Doikov2020InexactTM}
N.~Doikov and Y.~Nesterov.
\newblock Inexact tensor methods with dynamic accuracies.
\newblock In {\em ICML}, 2020.

\bibitem{PD18}
D.~Drusvyatskiy and C.~Paquette.
\newblock Efficiency of minimizing compositions of convex functions and smooth
  maps.
\newblock In {\em Mathematical Programming}, volume 178, page 503–558, 2019.

\bibitem{HT20}
N.~Hallak and M.~Teboulle.
\newblock Finding second-order stationary points in constrained minimization: A
  feasible direction approach.
\newblock In {\em Journal of Optimization Theory and Applications}, volume 186,
  page 480–503, 2020.

\bibitem{Kavis2019UniXGradAU}
A.~Kavis, K.~Y. Levy, F.~R. Bach, and V.~Cevher.
\newblock Unixgrad: A universal, adaptive algorithm with optimal guarantees for
  constrained optimization.
\newblock In {\em NeurIPS}, 2019.

\bibitem{Mura98}
M.~S. Kodialam and H.~Luss.
\newblock Algorithms for separable nonlinear resource allocation problems.
\newblock {\em Operations Research}, 46(2):272--284, 1998.

\bibitem{Kohler2017SubsampledCR}
J.~M. Kohler and A.~Lucchi.
\newblock Sub-sampled cubic regularization for non-convex optimization.
\newblock In {\em ICML}, 2017.

\bibitem{Lu16}
J.~Lu and M.~Johansson.
\newblock {Convergence analysis of approximate primal solutions in dual
  first-order methods}.
\newblock In {\em SIAM J. Optim}, volume 26(4), page 2430–2467, 2016.

\bibitem{Machart2012OptimalCT}
P.~Machart, S.~Anthoine, and L.~Baldassarre.
\newblock Optimal computational trade-off of inexact proximal methods.
\newblock {\em ArXiv}, abs/1210.5034, 2012.

\bibitem{BTB22}
B.~Mathieu, T.~Adrien, and B.~Francis.
\newblock {A note on approximate accelerated forward-backward methods with
  absolute and relative errors, and possibly strongly convex objectives.}
\newblock In {\em Open Journal of Mathematical Optimization}, volume~3, 2022.

\bibitem{Mokh18}
A.~Mokhtari, A.~Ozdaglar, and A.~Jadbabaie.
\newblock Escaping saddle points in constrained optimization.
\newblock In {\em Proceedings of the 32nd International Conference on Neural
  Information Processing Systems}, NIPS'18, page 3633–3643, Red Hook, NY,
  USA, 2018. Curran Associates Inc.

\bibitem{Monteiro13}
R.~D.~C. Monteiro and B.~F. Svaiter.
\newblock An accelerated hybrid proximal extragradient method for convex
  optimization and its implications to second-order methods.
\newblock {\em SIAM Journal on Optimization}, 23(2):1092--1125, 2013.

\bibitem{Nes05}
Y.~Nesterov.
\newblock Smooth minimization of non-smooth functions.
\newblock {\em Math. Program.}, 103:127–152, 2005.

\bibitem{Yu13}
Y.~Nesterov.
\newblock Gradient methods for minimizing composite functions.
\newblock {\em Mathematical Programming}, 140(1, Ser. B):125–161, 2013.

\bibitem{Potzl22}
B.~Pötzl, A.~Schiela, and P.~Jaap.
\newblock Inexact proximal newton methods in hilbert spaces.
\newblock In {\em https://spp1962.wias-berlin.de/preprints/192.pdf}, 2022.
\newblock pre-print.

\bibitem{Jen10}
J.~Rodolphe, M.~Julien, O.~Guillaume, and B.~Francis.
\newblock {Proximal methods for sparse hierarchical dictionary learning.}
\newblock In {\em Proceedings of the 27th International Conference on
  International Conference on Machine Learning (ICML)}, page 487–494.
  Association for Computing Machinery, 2010.

\bibitem{Schmidt2011ConvergenceRO}
M.~W. Schmidt, N.~L. Roux, and F.~R. Bach.
\newblock Convergence rates of inexact proximal-gradient methods for convex
  optimization.
\newblock In {\em NIPS}, 2011.

\bibitem{Stonyakin20}
F.~S. Stonyakin, A.~Tyurin, A.~V. Gasnikov, P.~E. Dvurechensky, A.~Agafonov,
  D.~Dvinskikh, M.~S. Alkousa, D.~Pasechnyuk, S.~Artamonov, and V.~Piskunova.
\newblock Inexact relative smoothness and strong convexity for optimization and
  variational inequalities by inexact model.
\newblock {\em arXiv: Optimization and Control}, 2021.

\bibitem{Thekum20}
K.~K. Thekumparampil, P.~Jain, P.~Netrapalli, and S.~Oh.
\newblock Projection efficient subgradient method and optimal nonsmooth
  frank-wolfe method.
\newblock In {\em Proceedings of the 34th International Conference on Neural
  Information Processing Systems}, NIPS'20, Red Hook, NY, USA, 2020. Curran
  Associates Inc.

\bibitem{vdb22}
L.~Vandenberghe.
\newblock Accelerated proximal gradient methods.
\newblock \url{https://www.seas.ucla.edu/~vandenbe/236C/lectures/fgrad.pdf},
  2022.
\newblock Accessed: {2023–07-07}.

\bibitem{Villa13}
S.~Villa, S.~Salzo, L.~Baldassarre, and A.~Verri.
\newblock Accelerated and inexact forward-backward algorithms.
\newblock In {\em SIAM Journal on Optimization}, volume 23(3), pages
  1607--1633, 2013.

\bibitem{Kong21}
K.~Weiwei and M.~Renato D.~C.
\newblock An accelerated inexact dampened augmented lagrangian method for
  linearly-constrained nonconvex composite optimization problems.
\newblock In {\em https://arxiv.org/pdf/2110.11151.pdf}, 2021.
\newblock pre-print.

\bibitem{iALM21}
Y.~Xu.
\newblock Iteration complexity of inexact augmented lagrangian methods for
  constrained convex programming.
\newblock In {\em Mathematical Programming}, volume 185, pages 199--244, 2021.

\bibitem{Yang21}
L.~{Yang} and K.-C. {Toh}.
\newblock {An inexact Bregman proximal gradient method and its inertial
  variant}.
\newblock {\em arXiv e-prints}, page arXiv:2109.05690, Sept. 2021.

\bibitem{Zhang21}
S.~Zhang, J.~Yang, C.~Guzm\'{a}n, N.~Kiyavash, and N.~He.
\newblock The complexity of nonconvex-strongly-concave minimax optimization.
\newblock In C.~de~Campos and M.~H. Maathuis, editors, {\em Proceedings of the
  Thirty-Seventh Conference on Uncertainty in Artificial Intelligence}, volume
  161 of {\em Proceedings of Machine Learning Research}, pages 482--492. PMLR,
  27--30 Jul 2021.

\end{thebibliography}
\newpage 
\appendix
\addcontentsline{toc}{section}{Appendices}

\section{Miscellaneous proofs}
\label{proof_theorems}
\subsection{Proof of Theorem \ref{consistency}.} \label{proof_theorem1}
\begin{proof} We proceed in two sequential steps: proving \eqref{eq:consistency} then \eqref{eq:consistency2} with the help of \eqref{eq:consistency}.  Let us proceed by \textit{absurdum} by assuming that there exists a pair of indices \\$(k_{1},k_{2}) \in \{0,\dots,N-1\}^2$ such that $$\mathcal{I}_{k_{1}} \succeq \mathcal{I}_{k_{2}}\hspace{3pt} \wedge \hspace{3pt} \sigma_{k_{1}}^* < \sigma_{k_{2}}^*$$ We consider the function $$(\sigma_{k_{1}},\sigma_{k_{2}}) \in \mathbb{R}^2 \to f(\sigma_{k_{1}},\sigma_{k_{2}}) := n_{k_{1}}(\sigma_{k_{1}}) + n_{k_{2}}(\sigma_{k_{2}}) + \sum_{k=0,\,k\not=k_1,\,k\not=k_2}^{N-1}\,n_k(\sigma_k^*) \in \mathbb{R}_{+}$$

Trivially, the entries $\sigma_{k_{1}}^*$ and $\sigma_{k_{2}}^*$ introduced before form a solution of the two dimensional problem 

$$\min_{\sigma_{k_{1}}\in \Xi,\hspace{2pt}\sigma_{k_{2}}\in \Xi}\,f(\sigma_{k_{1}},\sigma_{k_{2}}) \hspace{5pt}\text{s.t.}\hspace{5pt} d_{k_1}(\sigma_{k_{1}}) + d_{k_2}(\sigma_{k_{2}}) = \tilde{D}= D-\sum_{k=0,\,k\not=k_1,\,k\not=k_2}^{N-1}\,d_{k}(\sigma_k^*)$$
 
Let us show that $\sigma_{k_{1}}^* < \sigma_{k_{2}}^*$ is impossible under the hypotheses above. KKT necessary conditions ensure the existence of $\tilde{\lambda}\in\mathbb{R},\,\tilde{\mu}_{k_{1},\oplus},\,\tilde{\mu}_{k_{1},\ominus} \geq 0$ and $\tilde{\mu}_{k_{2},\oplus},\,\tilde{\mu}_{k_{2},\ominus} \geq 0$ with

\begin{align*}
n'_{k_1}(\sigma_{k_{1}}^*) + \tilde{\lambda}\,d'_{k_1}(\sigma_{k_{1}}^*) &= (\tilde{\mu}_{k_{1},\ominus}-\tilde{\mu}_{k_{1},\oplus})\\
n'_{k_2}(\sigma_{k_{2}}^*) + \tilde{\lambda}\,d'_{k_2}(\sigma_{k_{2}}^*) &= (\tilde{\mu}_{k_{2},\ominus}-\tilde{\mu}_{k_{2},\oplus})\\
(l-\sigma_{k_{1}}^*)\cdot \tilde{\mu}_{k_{1},\ominus} &= 0\\
(l-\sigma_{k_{2}}^*)\cdot \tilde{\mu}_{k_{2},\ominus} &= 0\\
(u-\sigma_{k_{1}}^*)\cdot \tilde{\mu}_{k_{1},\oplus} &= 0\\
(u-\sigma_{k_{2}}^*)\cdot \tilde{\mu}_{k_{2},\oplus} &= 0
\end{align*}

The only possible scenarios leading to $l \leq \sigma_{k_{1}}^* < \sigma_{k_{2}}^*\leq u$ are achieved when $\tilde{\mu}_{k_{1},\oplus} = \tilde{\mu}_{k_{2},\ominus} = 0$ implying that \begin{equation} 
- \frac{n'_{k_2}(\sigma_{k_{2}}^*) + \tilde{\mu}_{k_2,\oplus}}{d'_{k_2}(\sigma_{k_2}^*)} = \tilde{\lambda} = - \frac{n'_{k_1}(\sigma_{k_{1}}^*) - \tilde{\mu}_{k_1,\ominus}}{d'_{k_1}(\sigma_{k_1}^*)}
\label{eq:main_lambda_tilde}
\end{equation}
After elementary calculations, it must follow that $$ \frac{1+\frac{\tilde{\mu}_{k_{2},\oplus}}{n'_{k_{2}}(\sigma_{k_{2}}^*)}}{1-\frac{\tilde{\mu}_{k_{1},\ominus}}{n'_{k_{1}}(\sigma_{k_{1}}^*)}}  = \frac{\mathcal{I}_{k_{1}}(\sigma_{k_1}^*)}{\mathcal{I}_{k_{2}}(\sigma_{k_2}^*)} < 1$$

The inequality comes from the fact that $\{\mathcal{I}_k\}_{k=0}^{N-1}$ functions are strictly \edited{increasing} and strictly negative on $\Xi$ while $\sigma_{k_{1}}^* < \sigma_{k_{2}}^*$. \edited{Indeed, our inductive step leads to $$\mathcal{I}_{k_{1}}(\sigma_{k_{1}}^*) -  \mathcal{I}_{k_{2}}(\sigma_{k_{1}}^*) \geq 0 \Rightarrow 0 < \mathcal{I}_{k_{1}}(\sigma_{k_{1}}^*)/\mathcal{I}_{k_{2}}(\sigma_{k_{1}}^*) \leq 1$$ because of the strict negativity of $\{\mathcal{I}_k\}_{k=0}^{N-1}$. One then notices $\mathcal{I}_{k_2}(\sigma_{k_1}^*) < \mathcal{I}_{k_2}(\sigma_{k_2}^*)$.} \\\edited{Therefore}, one should observe that 
    $$ 0 \leq 1+\frac{\tilde{\mu}_{k_{2},\oplus}}{n'_{k_{2}}(\sigma_{k_{2}}^*)} < 1-\frac{\tilde{\mu}_{k_{1},\ominus}}{n'_{k_{1}}(\sigma_{k_{1}}^*)}$$
which turns out to be impossible since $\{n_k\}_{k=0}^{N-1}$ functions are \edited{strictly} increasing on $\Xi$. \\We conclude that \eqref{eq:consistency} is correct. To prove \eqref{eq:consistency2} on the other hand, let us imagine this time that there exists a pair of indices $(k_{1},k_{2}) \in \{0,\dots,N-1\}^2$ such that $$\mathcal{I}_{k_{1}} \succ \mathcal{I}_{k_{2}}\hspace{3pt} \wedge \hspace{3pt} l< \sigma_{k_{1}}^* \leq \sigma_{k_{2}}^* < u$$
Clearly, \eqref{eq:consistency} already shows that $\sigma_{k_{1}}^* \not< \sigma_{k_{2}}^*$. The only plausible scenario left resumes in $l< \sigma_{k_{1}}^* = \sigma^* = \sigma_{k_{2}}^* < u$. Setting $\tilde{\mu}_{k_{1},\oplus} = 0 = \tilde{\mu}_{k_{2},\ominus}$ in \eqref{eq:main_lambda_tilde} yields $$- \mathcal{I}_{k_{2}}(\sigma^*) = \tilde{\lambda} = - \mathcal{I}_{k_{2}}(\sigma^*)$$
Again, this contradicts our assumption that $\mathcal{I}_{k_{1}} \succ \mathcal{I}_{k_{2}}$.
\end{proof}

\subsection{Proof of Theorem \ref{general_theorem}.}
\label{proof_theorem2}
\begin{proof}
A neutral schedule $\{\delta_k = \bar{\delta}\}_{k=0}^{N-1}$ certifies, trivially, the feasibility of problem \eqref{eq:master_problem} whose basic domain $\Xi^N$ is compact.
Since $\bar{\delta} \in \text{int}(\Xi) = ]m \bar{\delta},\, M \bar{\delta}[$ is fixed, according to the definition of $h$ within Assumption B, we can write $$0 < \sum_{k=0}^{N-1}\,b_k\,h(\bar{\delta}) = \bar{\mathcal{B}} < \infty$$

Before diving into KKT conditions, let us point out two observations:
\begin{enumerate}
    \item With $m<1$, $\{\delta_k = m \bar{\delta}\}_{k=0}^{N-1}$ is clearly not feasible since $h$ is assumed to be strictly decreasing on $\Xi$. 
    \item With $M>1$, $\{\delta_k = M \bar{\delta}\}_{k=0}^{N-1}$ is clearly not optimal since the neutral schedule above is feasible and  
    $$ \sum_{k=0}^{N-1}\,a_k\,\bar{\delta} = ||a||_1\,\bar{\delta} < M\,||a||_1\,\bar{\delta} = \sum_{k=0}^{N-1}\,a_k\,M \bar{\delta}$$ 
    as $a \succ \mathbf{0}$ according to Assumption \eqref{assumption_inexact_model}.
\end{enumerate} 

Assumptions of Theorem \ref{general_theorem} satisfy the ones required by Theorem \ref{consistency} so that this latter applies verbatim to problem \eqref{eq:master_problem} with $n_k(\cdot) = a_k\,(\cdot)$ and $d_k(\cdot) = b_k\, h(\cdot)$. \\The negative sign of $h'$ ensures the soundness of our ranking (cfr. \eqref{eq:ordering_general}). $$\rho(k_{1}) < \rho(k_{2}) \Rightarrow \nu_{k_{1}} = b_{k_{1}} / a_{k_{1}} \geq b_{k_{2}} / a_{k_{2}}\ = \nu_{k_{2}}$$ leading to $$ \mathcal{I}_{k_{1}}(\cdot) = a_{k_{1}} / (b_{k_{1}}\,h'(\cdot)) \succeq a_{k_{2}} / (b_{k_{2}}\,h'(\cdot))=\mathcal{I}_{k_{2}}(\cdot) $$ Since $n'_k / d'_k$ ratios are strictly \edited{increasing} and negative, we can claim that there exists  $N_{\oplus},N_{\ominus} \in \{0,\dots,N-1\}$ defining two subsets $\oplus = \{k \,|\,\rho(k) < N_{\oplus}\}, \ominus=\{k \,|\,\rho(k) > N-1- N_{\ominus}\}$ of $\{0,\dots,N-1\}$ such that $\delta_k^*$ will be affected to the upper-bound $u = M\bar{\delta}$ if $k \in \oplus$ whereas it will be assigned the lower-bound $l = m\bar{\delta}$ if $k\in \ominus$.
By means of our two previous observations, $$N_{\oplus},N_{\ominus} <N$$ 

If $(\oplus,\ominus)$ forms a partition of $\{0,\dots,N-1\}$  with
    \begin{equation}
    \sum_{k \in \oplus}\,b_k\,h(M\bar{\delta}) + \sum_{k \in \ominus}\,b_k\,h(m\bar{\delta}) = \bar{\mathcal{B}}
    \label{eq:degenerate}
    \end{equation}
    then the inherent schedule $\delta_k = M\bar{\delta}$ for every $k \in \oplus$, $\delta_k = m\bar{\delta}$ for every $k \in \ominus$ might be a solution of \eqref{eq:master_problem}. The structure of such schedule is identical to the one expected by \eqref{eq:general_optimal_schedule}. We consider now other possible solutions.
KKT necessary conditions \edited{(see LICQ below)} imply that if $\delta^*$ is a local optimum of \eqref{eq:master_problem}, then there exist $\tilde{\lambda} \in \mathbb{R}$, $\tilde{\mu}_{\ominus},\,\tilde{\mu}_{\oplus} \in \mathbb{R}_+^{N}$ (dual feasibility) such that equalities and equalities of Table \ref{tab:KKT} are jointly satisfied. When the context is not misleading, we make the abuse of notation for any $\delta \in \Xi^N$, $h'(\delta) = (h'(\delta_0),\dots,h'(\delta_{N-1}))^T$ and $h(\delta) = (h(\delta_0),\dots,h(\delta_{N-1}))^T$.
\renewcommand{\arraystretch}{1.5}
\begin{table}[ht]
\centering
\caption{KKT conditions for \eqref{eq:master_problem}}
\begin{tabular}{|c|c|}
\hline
    $a + \tilde{\lambda}\,b \odot \,h'(\delta^*)  = (\tilde{\mu}_{\ominus}-\tilde{\mu}_{\oplus})$ & stationarity\\
    $m \bar{\delta} \,\preceq \, \delta^* \, \preceq \,M \bar{\delta}$ & primal feasibility (bounds)\\
    $b^T\,h(\delta^*) = \bar{\mathcal{B}}$ & primal feasibility (budget)\\
    $(m \bar{\delta} -\delta^*) \odot \tilde{\mu}_{\ominus} = \mathbf{0}$ & complementary (-) \\
    $(\delta^* - M \bar{\delta})\odot \tilde{\mu}_{\oplus} = \mathbf{0}$ & complementary (+)\\
    \hline
\end{tabular}
\label{tab:KKT}
\end{table}
\vspace{-7pt}
\paragraph*{LICQ} Indeed, \eqref{eq:master_problem} enjoys the LICQ constraint qualification at any $\delta \in \Xi^N$ not matching the degenerate case \eqref{eq:degenerate}. At any feasible $\delta \in \Xi^N$, the span of active constraints' gradients always involve $h'(\delta)$ whose entries are all non-zero, excepted eventually at $k_{+} \in \oplus$ where $\delta_{k_{+}} = M \bar{\delta}$ if $h'(M\bar{\delta}) = 0$ (due to the convexity of $h$ and strict monotony of $h'$). Gradients of other active constraints are vectors $\mathbf{e}_{k_{+}}$ for $k_{+} \in \oplus$ and $\mathbf{e}_{k_{-}}$ for $k_{-} \in \ominus$ that are all linearly independent. Since we explicitly discard schedules of \eqref{eq:degenerate} type, it must exist an entry $k \in \{0,\dots,N-1\}$ where bound constraints are inactive, hence $h'(\delta_k) < 0$. In order to obtain the $\mathbf{0}$ vector from a linear combination of active constraints' gradients, one should then take zero times $h'(\delta)$. If no bound constraint is active at $\delta$ then the induced linear combination immediately turns out to be the trivial one. Conversely, if some bound constraints are active then their associated coefficients in the linear combination must also be zero as shown previously, giving rise again to the null combination. \vspace{5pt}

Note that KKT conditions are sufficient since the relaxation (convex) of \eqref{eq:master_problem}
\begin{equation}
\min_{m \bar{\delta} \preceq \delta \preceq M \bar{\delta}}\,\sum_{k=0}^{N-1}\,a_k\,\delta_k \hspace{3pt}\text{s.t.}\hspace{3pt} \sum_{k=0}^{N-1}\,b_k\,h(\delta_k) \leq \bar{\mathcal{B}}\end{equation}
would always lead to an optimal solution that tightens the budget constraint as $a \succ \mathbf{0}$ and $h$ is strictly decreasing. For writing ease, let us now denote $$\mathcal{T} = \{0,\dots,N-1\} \backslash (\oplus \cup \ominus)$$ In line with the comments made so far, we look after a schedule $\{\delta_k^*\}_{k=0}^{N-1}$ of the form
\begin{equation}
\begin{cases} \delta_k^* =  M\,\bar{\delta} & k\in \oplus \\ \delta_k^* \,\in\, ]m\bar{\delta},M\bar{\delta}[ & k\in \mathcal{T} \\ \delta_k^* = m\,\bar{\delta} & k \in \ominus \end{cases}
\label{eq:pre_general_optimal_schedule}
\end{equation}
for two positive integers $N_{\oplus},N_{\ominus}$ with $0 \leq N_{\oplus}+N_{\ominus}\leq N-1$. \\
\vspace{2pt}
 \hspace{-5pt}For any $k \in \mathcal{T}$, stationarity requirement compels \vspace{-10pt}$$ a_k + \tilde{\lambda}\,b_k\,h'(\delta_k^*) = 0 \Rightarrow h'(\delta_k^*) = -(\tilde{\lambda}\,a_k^{-1} \,b_k)^{-1} = \frac{a_k\,\lambda^*}{b_k} \Rightarrow \delta^*_k = (h')^{(-1)}\bigg(\frac{a_k\,\lambda^*}{b_k}\bigg) $$for $\lambda^* = -(\tilde{\lambda})^{-1} \in \mathbb{R}$. Primal feasibility (budget) row of Table \ref{tab:KKT} translates to \eqref{eq:budget_general}.\end{proof}

\subsection{Uniqueness.} 
\label{uniqueness_explanation}
\begin{proof}
We prove here the uniqueness of $\delta^*$. For ease of writing, let us reuse the notation $$\sum_{k=0}^{N-1}\,b_k\,h(\bar{\delta}) = \bar{\mathcal{B}}$$
Aside from the fact that all the entries $\nu_k$ of $\nu$ supposedly differ, Assumption \ref{assumption_cost_model} about $h$ certifies that for any fixed $N_{\oplus}$, $N_{\ominus}$ , there can be at most one $\lambda^* \in \mathbb{R}$ ensuring \eqref{eq:budget_general}. Seen from a different angle, for any couple $(N_{\oplus},N_{\ominus})$ there can be a single $\{\delta_k^*(N_{\oplus},N_{\ominus})\}_{k=0}^{N-1}$ respecting both \eqref{eq:general_optimal_schedule} and \eqref{eq:budget_general}. Let us assume by \emph{absurdum} the optimality of two such schedules. Then, there must exist two couples $(N_{\oplus,1},N_{\ominus,1})$ and $(N_{\oplus,2},N_{\ominus,2})$ such that $$ \sum_{k=0}^{N-1}\,a_k\,\delta_k^*(N_{\oplus,1},N_{\ominus,1}) = u ^* = \sum_{k=0}^{N-1}\,a_k\,\delta_k^*(N_{\oplus,2},N_{\ominus,2})$$
Any schedule $\hat{\delta}^{(t)}$ \emph{in between} for $t \in ]0,1[$ with $$\hat{\delta}^{(t)}_k = t\cdot \delta_k^*(N_{\oplus,1},N_{\ominus,1}) + (1-t)\cdot \delta_k^*(N_{\oplus,2},N_{\ominus,2})$$ for every $k \in \{0,\dots,N-1\}$ would trivially also yield the optimal cost $u^*$. By convexity of the set $\mathcal{H} \subset \Xi^N$, 
$$\mathcal{H} = \Bigg\{\delta \in \Xi^N\,\Bigg|\,\sum_{k=0}^{N-1}\,b_k\,h(\delta_k) \leq \bar{\mathcal{B}} \Bigg\}$$
we have that $\delta^*(N_{\oplus,1},N_{\ominus,1}) \in \mathcal{H} \wedge \delta^*(N_{\oplus,2},N_{\ominus,2}) \in \mathcal{H} \Rightarrow \hat{\delta}^{(t)} \in \mathcal{H}$. We examine two mutually exclusive situations.  
\begin{enumerate}
    \item $ \sum_{k=0}^{N-1}\,b_k\, h(\hat{\delta}^{(t)}_k) < \bar{\mathcal{B}}.$ Then, it is possible to decrease at least one entry of $\hat{\delta}^{(t)}$ and obtain a strictly better objective value meaning that $\delta^*(N_{\oplus,1},N_{\ominus,1})$ and $\delta^*(N_{\oplus,2},N_{\ominus,2})$ were not jointly optimal: contradiction.
    \vspace{5pt}
    \item $ \sum_{k=0}^{N-1}\,b_k\, h(\hat{\delta}^{(t)}_k) = \bar{\mathcal{B}}.$ In such scenario, the entire segment $[\delta^*(N_{\oplus,1},N_{\ominus,1}),\delta^*(N_{\oplus,2},N_{\ominus,2})]$ must be an edge of $\mathcal{H}$. We can quickly invoke continuity arguments suggesting the existence of multiple schedules on that segment sharing both a common number of $\delta_k^* = M\bar{\delta}$ (same $N_{\oplus}$ value) and a common number of $\delta_k^* = m\bar{\delta}$ (same $N_{\ominus}$ value). Yet by linearity their objective value would remain equal to $u^*$. As mentioned above, this is impossible because there exists at most one valid schedule $\delta^*(N_{\oplus},N_{\ominus})$ by pair $(N_{\oplus},N_{\ominus})$: contradiction.
\end{enumerate}
\end{proof}
\subsection{Closed-form schedules with \emph{cut-off}.}
\label{full_delta}
We extend the family of $\{h_{r>0}\}$ functions and incorporate the limiting case $r = 0$.\\ For any $r\geq0$,
\begin{equation}
    h_{r}(\delta) = \begin{cases} 
   -\log(\delta) & r=0 \\
    (\delta)^{-r} & r>0
    \end{cases} \hspace{5pt} \Rightarrow \hspace{5pt} (h_{r}')^{(-1)}\big(-\omega\big) = \begin{cases} 
  \big(\omega\big)^{-1}& r=0 \\
    \Big(\frac{\omega}{r}\Big)^{-\frac{1}{r+1}}  & r>0
    \end{cases}
    \label{eq:family_broad_extended}
\end{equation} 

We kindly remind that we only focus on instances of $h$ functions positive on their respective domains. Both Theorem \ref{corollary_h_r} and \ref{corollary_h_r_work} immediately follow from Theorem \ref{general_theorem}, their respective proof is therefore omitted.
\vspace{10pt}

\theorem
\label{corollary_h_r} 
Let Assumptions A and B hold with $(a,b) \in \mathbb{R}^{N \times 2}_{++}$, $m < 1 < M$ and $h=h_{r\geq0} : [m\,\bar{\delta},M\,\bar{\delta}] \to \mathbb{R}_+$. 

\vspace{5pt}
$\exists N_{\oplus},N_{\ominus} \in \{0,\dots,N-1\}$, $\hat{\lambda} \in \mathbb{R}$ such that $\forall k \in \{0,\dots,N-1\}$
\begin{equation}
 \delta_{k}^* = \begin{cases} M\,\bar{\delta} & k \in \oplus := \{ \tilde{k} \,|\, \rho(\tilde{k}) < N_{\oplus}\} \\ \hat{\lambda} \, \big(\frac{b_k}{a_k}\big)^{\frac{1}{r+1}} & k \in \mathcal{T} := \{\tilde{k}\,|\,N_{\oplus}\leq \rho(\tilde{k}) \leq N-1-N_{\ominus}\} \\ m\,\bar{\delta} & k\in \ominus := \{  \tilde{k} \,|\, \rho( \tilde{k}) > N-1-N_{\ominus}\} \end{cases} \label{eq:schedule_h_r}
\end{equation}
where $\rho(k)$ depicts the \emph{descending rank} of $\nu_k = b_k / a_k$, $\hat{\lambda} > 0$ is defined as
  \begin{equation}
  \hat{\lambda} = 
  \begin{cases} \bar{\delta} \cdot \Bigg[\frac{M^{\sum_{k \in \oplus}\,b_k}\cdot m^{\sum_{k \in \ominus}\,b_k}}{\Pi_{k\in \mathcal{T}}\,(b_k/a_k)^{-b_k}}\Bigg]^{-\big(\sum_{k\in \mathcal{T}}\,b_k\big)^{-1}} & r=0 \\
\bar{\delta} \cdot\Bigg[\frac{\big(\sum_{k=0}^{N-1}\,b_k\big)-\big(\sum_{k\in\oplus}\,b_k\big)\,M^{-r}-\big(\sum_{k\in\ominus}\,b_k\big)\,m^{-r}}{\sum_{k\in \mathcal{T}}\,(a_k^r\,b_k)^{\frac{1}{r+1}}}\Bigg]^{-\frac{1}{r}} &r>0 
\end{cases}
\label{eq:closed_hat_lambda}
\end{equation}
 and $\delta^*$ stands as a solution of \eqref{eq:master_problem}.
\vspace{15pt}
\theorem
\label{corollary_h_r_work} 
Let Assumptions A and B hold with $(a,b) \in \mathbb{R}^{N \times 2}_{++}$, $\omega_M < \bar{\omega} / N < \omega_m$ and $h=h_{r>0} : \mathbb{R}_+ \to \mathbb{R}_+$. 

\vspace{5pt}
$\exists N_{\oplus},N_{\ominus} \in \{0,\dots,N-1\}$, $\hat{\lambda} \in \mathbb{R}$ such that $\forall k \in \{0,\dots,N-1\}$
\begin{equation}
 \omega_{k}^* = \begin{cases} \omega_M & k \in \oplus := \{ \tilde{k} \,|\, \rho(\tilde{k}) < N_{\oplus}\} \\ \hat{\lambda}\,(a_k^r\,b_k)^{\frac{1}{r+1}} & k \in \mathcal{T} := \{\tilde{k}\,|\,N_{\oplus}\leq \rho(\tilde{k}) \leq N-1-N_{\ominus}\} \\ \omega_m & k\in \ominus := \{  \tilde{k} \,|\, \rho( \tilde{k}) > N-1-N_{\ominus}\} \end{cases} \label{eq:schedule_h_r_work}
\end{equation}
where $\rho(k)$ depicts the \emph{descending rank} of $\nu_k = (a_k^r\,b_k)^{-1}$, $\hat{\lambda} > 0$ is defined as
  \begin{equation}
\hat{\lambda} = \Bigg[\frac{\bar{\omega}-N_{\oplus}\,\omega_M - N_{\ominus}\,\omega_m}{\sum_{k\in \mathcal{T}}\,(a_k^r\,b_k)^{\frac{1}{r+1}}}\Bigg] \label{eq:two_work}
\end{equation}
 and $\omega^*$ stands as a solution of \eqref{eq:master_problem_manual}.
\end{document}